\def\ba{\pmb{a}}
\def\bsa{\pmb{sa}}
\def\a{{\alpha}}
\def\b{{\beta}}
\def\l{{\lambda}}
\def\w{{\omega}}
\def\g{{\gamma}}
\def\o{{\otimes}}
\def\s{{\sigma}}
\def\t{{\tau}}
\def\bq{{\pmb{p}}}
\def\SL2{{\mbox{SL}_2(\BZ)}}
\def\Mp2{{\mbox{Mp}_2(\BZ)}}
\def\1{{\mathds{1}}}
\def\to{\rightarrow}
\def\qed{{\ \ \ \mbox{$\square$}}}
\def\BC{{\mathbb{C}}}
\def\BN{{\mathbb{N}}}
\def\BR{{\mathbb{R}}}
\def\BZ{{\mathbb{Z}}}
\def\BQ{{\mathbb{Q}}}
\def\BF{{\mathbb{F}}}
\def\CC{\mathcal{C}}
\def\DD{\mathcal{D}}
\def\BB{\mathcal{B}}
\def\UU{\mathcal{U}}
\def\U{\mathcal{U}_\infty}
\def\Gal{{\operatorname{Gal}}}
\def\End{{\operatorname{End}}}
\def\Rep{{\operatorname{Rep}}}
\newcommand\Irr{{\operatorname{Irr}}}
\def\id{{\operatorname{id}}}
\def\lcm{{\operatorname{lcm}}}
\def\Tor{{\operatorname{Tor}}}
\def\Tr{{\operatorname{tr}}}
\def\ord{{\operatorname{ord}\,}}
\def\ob{{\operatorname{ob}}}
\def\Vs{{\operatorname{Vec}}}
\def\inv{^{-1}}
\def\ev{{\operatorname{ev}}}
\def\coev{{\operatorname{coev}}}
\renewcommand{\Xi}{\Psi}
\renewcommand\subset{ \subseteq }
\newtheorem{thm}{Theorem}[section]
\newtheorem{cor}[thm]{Corollary}
\newtheorem{prop}[thm]{Proposition}
\newtheorem{lem}[thm]{Lemma}
\newtheorem*{theorem*}{Theorem}
\theoremstyle{definition}
\newtheorem{defn}[thm]{Definition}
\newtheorem{remark}[thm]{Remark}
\newtheorem{question}[thm]{Question}
\newcommand{\dcox}{h^{\vee}}
\newcommand{\so}{\mathfrak{so}}
\newcommand{\su}{\mathfrak{su}}
\newcommand{\iprod}[1]{\langle #1 \rangle}
\newcommand{\hroot}{\vartheta_{0}}
\newcommand{\p}{\rho}
\newcommand{\WW}{\mathcal{W}}
\newcommand{\sW}{s\mathcal{W}}
\newcommand{\EE}{\mathcal{E}}
\newcommand{\II}{\mathcal{I}}
\newcommand{\sgn}{\operatorname{sgn}}
\newcommand{\ceil}[1]{\left\lceil #1 \right\rceil}
\renewcommand{\AA}{\mathcal{A}}
\newcommand{\ZZ}{\mathcal{Z}}
\newcommand\jacobi[2]{{\genfrac(){}{0}{#1}{#2}}}
\newcommand{\bp}[1]{{^{\boxtimes #1}}}
\newcommand{\un}{{\operatorname{un}}}
\newcommand{\witt}[1]{\left[ {#1} \right]}
\def\Gab{\mbox{Gal}(\BQ^{\operatorname{ab}})}
\def\D{{\Delta}}
\def\F{{\Phi}}
\newcommand{\FSexp}{\operatorname{FSexp}}
\newcommand{\sqdim}[1]{{\sqrt{\dim(#1)}}}
\renewcommand{\ss}[1]{{\varepsilon}_{#1}}
\newcommand{\HH}{\mathcal{H}}
\newcommand{\bt}{\boxtimes}
\newcommand{\bte}{{\boxtimes_{\EE}}}
\newcommand{\svec}{\operatorname{sVec}}
\newcommand{\smap}{\mathcal{S}}
\newcommand{\ot}{\otimes}
\newcommand{\MM}{\mathcal{M}}
\newcommand{\super}{\mathcal{S}}
\newcommand{\HS}{{\hat{S}}}
\newcommand{\Po}{{\Pi_{0}}}
\newcommand{\hN}{\hat{N}}
\newcommand{\FPdim}{\operatorname{FPdim}}
\newcommand{\sqFP}[1]{\sqrt{\FPdim(#1)}}
\newcommand{\NN}{\mathcal{N}}
\newcommand{\K}{\UU_2}
\newcommand{\esgn}{I_{\EE}}
\newcommand{\esgnp}{{I_{\EE}'}}
\newcommand{\ssp}[1]{{{\varepsilon}_{#1}'}}
\def\pt{{\operatorname{pt}}}
\def\GQ{\Gal(\overline{\BQ})}
\def\hs{{\hat{\s}}}
\def\perm{{\operatorname{Sym}}}
\title{Higher central charges and Witt groups}
\author{Siu-Hung Ng}
\address{Department of Mathematics\\
    Louisiana State University\\
    Baton Rouge, LA 70803\\
    U.S.A.}
\thanks{The first author was partially supported by the NSF grant DMS-1664418.}
\email{rng@math.lsu.edu}
\author{Eric C. Rowell}
\address{Department of Mathematics\\
    Texas A\&M University\\
    College Station, TX 77843-3368\\
    U.S.A.}
    \email{rowell@math.tamu.edu}
\thanks{The second author was partially supported by a Texas A\&M Presidential Impact Fellowship, a Simons Fellowship and  the NSF grant DMS-1664359.}
\author{Yilong Wang}
\address{Beijing Institute of Mathematical Sciences and Applications (BIMSA), Huairou, Beijing, China.}
\email{wyl@bimsa.cn}
\author{Qing Zhang}
\address{Department of Mathematics\\Purdue University\\
West Lafayette\\
IN 47907\\U.S.A.}
\email{zhan4169@purdue.edu}
\keywords{Higher central charge; Quantum group modular category; Signature; Witt group}
\begin{document}
\begin{abstract}
In this paper, we introduce the definitions of signatures of braided fusion categories, which are proved to be invariants of their Witt equivalence classes. These signature assignments define group homomorphisms on the Witt group. The higher central charges of pseudounitary modular categories can be expressed in terms of these signatures, which are applied to prove that the Ising modular categories have infinitely many square roots in the Witt group modulo the pointed part. This result is further applied to prove a conjecture of Davydov-Nikshych-Ostrik on the super-Witt group: the torsion subgroup generated by the completely anisotropic s-simple braided fusion categories has infinite rank.
\end{abstract}
\maketitle

\section{Introduction}

Fusion categories can be viewed as categorical generalizations of finite groups.  From this perspective
modular (tensor) categories are generalizations of metric groups $(G,q)$, i.e., finite abelian groups $G$ equipped with a non-degenerate quadratic form $q: G \to \BC^\times$.  Indeed, any pointed modular category (i.e., with each simple object tensor-invertible) can be constructed from a metric group, and conversely \cite{DGNO}.  Many structures and properties of metric groups can be generalized to the modular category setting.  Two important examples are Gauss sums and Witt equivalence.

The quadratic Gauss sum $\tau(G,q) = \sum_{x \in G} q(x)$ of a metric group $(G,q)$ is known to have the form $\zeta \sqrt{|G|} $ for some $8^{th}$-root of unity $\zeta$. In particular, the modulus of any Galois conjugate of the quadratic Gauss sum of a metric group is always equal to $\sqrt{|G|}$.  The categorical dimension $\dim(\CC)$ of a fusion category $\CC$ plays the role of the order of $G$, and
the counterpart of the quadratic Gauss sum for modular categories is the first Gauss sum $\tau_1(\CC)=\sum_{X\in\Irr(\CC)}(d_X)^2 \,\theta_X$ where $d_X$ are the categorical dimensions and $\theta_X$ are the twists.  While the categorical dimension $\dim(\CC)$ of a modular category $\CC$ may not be an integer, it is a totally positive cyclotomic integer (cf. \cite{ENO}). This means $\s(\dim(\CC))$ is a positive real cyclotomic integer for any automorphism $\s$ of $\overline{\BQ}$. It has been shown that the first Gauss sum $\tau_1(\CC)$ is equal to $\xi_1(\CC) \sqrt{\dim(\CC)}$ where $\xi_1(\CC)$ is a root of unity, and called the \emph{central charge of $\CC$} (cf. \cite{BakalovKirillov}). However, the Galois conjugates of the Gauss sum $\tau_1(\CC)$ can have moduli different from $\sqrt{\dim(\CC)}$. This apparent discrepancy between the Gauss sum in the categorical setting and the metric group setting inspired the notions of higher Gauss sums and higher central charges for modular categories introduced in \cite{NSW}.

The concept of Witt equivalence and the Witt group $\WW$ for non-degenerate braided fusion categories was introduced in \cite{DMNO}, generalizing the concept for metric groups. For metric groups, Witt equivalence is defined modulo groups with a hyperbolic quadratic form, where the operation is the usual direct product of metric groups. For non-degenerate braided fusion categories one uses the Deligne product $\boxtimes$ and considers equivalence classes modulo Drinfeld centers. It is worth noting that Witt classes do not depend on, or assume, any pivotal structure. Moreover, the classical Witt group $\WW_{\pt}$ corresponding to metric groups appears as a subgroup of $\WW$, as the Witt classes of pointed modular categories. Witt equivalence for slightly degenerate braided fusion categories was introduced in \cite{DNO}, and the corresponding Witt group  $\sW$ is called the \emph{super-Witt group} in this paper.  The study of the Witt group for non-degenerate braided fusion categories leads to many interesting questions about its structure (see \cite{DNO,DMNO}).  While it is known that the torsion subgroup $\Tor(\WW)$ of $\WW$ is a $2$-group with exponent $32$, it was not previously known whether $\Tor(\WW)/\WW_{\pt}$ has infinite cardinality or not.  Another interesting open problem is to find a set of generators for the Witt group $\WW$. One of the reasons these problems are difficult is that there are very few known invariants of the Witt group.  

Witt classes also have physical significance: symmetry gauging \cite{gauging,physicsgauging} and the reverse process boson condensation preserve Witt class.  Both of these are topological phase transitions in the theory of topological phases of matter \cite{physicsgauging,burnell}.  Each Witt class has a unique completely anisotropic representative \cite{DMNO}, and all members of that class can be reduced to this representative by anyon condensation.  Thus, distinguishing Witt classes can be regarded as analogous to determining allotropy classes in chemistry.

It is known \cite{ENO} that any pseudounitary braided fusion category has a unique canonical spherical structure so that the categorical dimensions $d_X>0$ for each $X$.  Moreover, the Witt classes with a pseudounitary representative form a subgroup $\WW_{\un}$ of $\WW$ \cite{DMNO}.  For a pseudounitary non-degenerate braided fusion category $\CC$ we may define its higher central charges to be those obtained from the modular category by endowing it with the canonical spherical structure yielding $d_X>0$ for all $X$. It has been proved in \cite{NSW} that the higher central charges of degrees coprime to the Frobenius-Schur exponents of any two Witt equivalent pseudounitary modular categories are equal, generalizing the case for the first central charge which was proved in \cite{DMNO}. In particular, the higher central charges are Witt invariants on $\WW_{\un}$. The higher central charges of $\CC$ with degrees coprime to the Frobenius-Schur exponent can be reformulated as a function $\Xi_\CC: \GQ \to \mu_\infty$, where $\mu_\infty \subset \BC$ is the group of roots of unity in $\BC$. This \emph{central charge function} $\Xi_\CC$ of $\CC$ can be expressed in terms of the first central charge $\xi_1(\CC)$, and the \emph{signature} $\ss{\CC} :\GQ \to \{\pm 1\}$ of $\CC$, which is a function given by $\ss{\CC}(\s) = \frac{\s(\sqrt{\dim(\CC)})}{|\s(\sqrt{\dim(\CC)})|}$. Since $\dim(\CC)$ is totally positive, $\sqrt{\dim(\CC)}$ is a totally real algebraic integer. Therefore, $\s(\sqrt{\dim(\CC)})$ is a nonzero real number, which can only be either positive or negative.  It is proved in Section 3 that the signature of a pseudounitary modular category is an invariant of its Witt class. Moreover, both the central charges and the signatures of pseudounitary modular categories can be extended to group homomorphisms $\Xi$ and $\varepsilon$ from the Witt subgroup $\WW_{\un}$  to the group $\U$ of functions from $\GQ$ to $\mu_\infty$ (cf. Section 4)

The signatures of nonzero totally real algebraic numbers, especially those of algebraic units, are studied in number theory \cite{DummitVoight, DDH}.  It is simple to determine the signatures of real quadratic numbers, but it is a difficult task for higher degree totally real algebraic number. However, for the quantum group modular categories $\CC_r:=\so(2r+1)_{2r+1}$ ($r \ge 1$), we are able to determine the signatures of a family of infinite subsequences of the $\CC_r$. Our starting point is a formula for $\sqrt{\dim(\CC_r)}$ expressed as a product of sine values of rational angles. The determination of the signatures of these modular categories is inspired by the computation  of the quadratic Gauss sum $\sum_{j = 0}^{k-1} \zeta_{k}^{j^{2}}$ in \cite{GaussDA} (see also \cite{Serre}).

Using these results, we study the Witt subgroups generated by the Witt classes of this family of subsequences. We prove in Theorem \ref{thm:signature} that the signatures of the categories in each of these subsequences are $\BF_2$-linearly independent functions, which enables us to determine the kernel and the image of the restriction of the signature homomorphism on these Witt subgroups. As a corollary, we show that they are all isomorphic to $\BZ/32\oplus (\BZ/2)^{\oplus \BN}$. Moreover, it is well-known that $[\CC_r]$ are square roots of the Ising modular categories in the Witt group, and we prove that the Ising modular category has infinitely many square roots in $\WW/\WW_{\pt}$.

In $\sW$, there is a subgroup $\sW_2$ generated by the Witt classes of completely anisotropic s-simple braided fusion categories of finite order, and $\sW_2$ is  of exponent 2 (cf. \cite{DNO}). It is conjectured  \cite[Conjecture 5.21]{DNO} that $\sW_2$ has infinite rank. By applying Corollary \ref{c:Gz} and Proposition \ref{prop:G-and-pt}, we prove this conjecture in Theorem \ref{thm:sW2}.

The paper is organized as follows: In Section 2, we briefly review the key concepts for fusion categories and the Witt group $\WW(\EE)$ over a symmetric fusion category $\EE$ for later use. In Section 3, we define two versions of signatures for fusion categories over $\EE$ and show that they are Witt invariants. In Section 4, we derive a formula of the higher central charges in terms of the categorical dimension signature, and we define the higher central charge homomorphism $\Xi$ on $\WW_{\un}$ in terms of the signature $\ss{}$. In Section 5, we study the quantum group modular categories $\so(2r+1)_{2r+1}$, with an emphasis on their signatures. In Section 6, we prove our first main results Theorem \ref{thm:signature}, Corollaries \ref{c:indep} and \ref{c:Gz}. We finally prove the conjecture of  Davydov-Nikshych-Ostrik in Theorem \ref{thm:sW2} of Section 7.

Throughout this paper, we use the notation $\zeta_a := \exp\left(\frac{2\pi i}{a}\right)$, and $\BQ_a := \BQ(\zeta_{a})$ for any $a \in \BN$. In particular,  $i := 
\exp\left(\frac{2\pi i}{4}\right) = \sqrt{-1}$.

\section{Preliminaries}
\subsection{Fusion categories and their global dimensions}\label{subsec:FC}
A \emph{fusion category} is a semisimple, $\BC$-linear abelian, rigid monoidal category with finite-dimensional Hom-spaces and finitely many simple objects which include the tensor unit $\1$ (cf. \cite{ENO}). For any fusion category $\CC$, we denote by $\Irr(\CC)$ the set of isomorphism classes of simple objects of $\CC$. If the context is clear, we will use the abuse notation to denote an object in an isomorphism class $X$ of $\CC$ by $X$. The tensor product endows $K_0(\CC)$, the Grothendieck group of $\CC$, a ring structure. More precisely, we have $X\o Y = \sum_{Z\in\Irr(\CC)} N_{X,Y}^{Z} Z$ for any $X, Y\in\Irr(\CC)$, where
$$N_{X,Y}^{Z} := \dim_{\BC}\CC(X\o Y, Z)\,.$$ 
For any $X\in\Irr(\CC)$, let $\NN_{X}$ be the square matrix of size $|\Irr(\CC)|$ such that $(\NN_{X})_{Y, Z} = N_{X,Y}^{Z}$ for any $Y, Z \in \Irr(\CC)$. The \emph{Frobenius-Perron dimension} (or FP-dimension) of $X\in\Irr(\CC)$, denoted by $\FPdim(X)$, is the largest positive eigenvalue for $\NN_{X}$ (cf. \cite{ENO}). The Frobenius-Perron dimension of $\CC$ is defined to be
$$
\FPdim(\CC) = \sum_{X \in \Irr(\CC)} \FPdim(X)^2\,.
$$
It is shown in \cite{ENO} that for any fusion category $\CC$, $\FPdim(\CC)$ is a totally positive cyclotomic integer.

Let $\CC$ be a fusion category. For any object $V$ of $\CC$, the left dual of $V$ is denoted by the triple $(V^*, \ev_V, \coev_V)$, where 
$\ev_{V}: V^* \o V \to \1$ and $\coev_{V}: \1 \to V \o V^{*}$ are respectively the evaluation and coevaluation morphisms for the left dual $V^*$ of $V$ (cf. \cite{tcat}). A simple object $X$ of $\CC$ is called \emph{invertible} if $X \ot X^* \cong \1$.  For any morphism $f: X \to X^{**}$, its \emph{left quantum trace} is defined to be 
$$
\Tr_X(f) := \ev_{X^{*}} \circ (f \o \id_{X^{*}}) \circ \coev_{X} \in \End_{\CC}(\1) \cong \BC\,.
$$

Since $\CC$ is a fusion category, $V \cong V^{**}$ for any $V \in \ob(\CC)$  (cf. \cite{mug2, ENO}). If $X \in \Irr(\CC)$ and  $h: X \to X^{**}$ is a nonzero morphism,  the \emph{squared norm} of $X$ is defined as
$$
|X|^2 := \Tr_X(h)\cdot \Tr_{X^{*}}((h^{-1})^*) \in \BC\,,
$$
which is independent of the choice of $h$. The \emph{global dimension} (or categorical dimension) of $\CC$ is defined as 
$$
\dim(\CC) = \sum_{X \in \Irr(\CC)} |X|^2\,.
$$
By \cite[Theorem 2.3]{ENO}, $|X|^2 > 0$ for any $X \in \Irr(\CC)$, and $\dim(\CC) \geq 1$ for any fusion category $\CC$. Moreover, by \cite[Remark 2.5]{ENO}, $\dim(\CC)$ is a totally positive algebraic integer. We will denote the positive square root of $\dim(\CC)$ by $\sqdim{\CC}$. 

One can extend the left duality of $\CC$ to a contravariant functor $(-)^*$. Then $(-)^{**}$ defines a monoidal functor on $\CC$.  A \emph{pivotal structure} on a fusion category $\CC$ is an isomorphism of monoidal functors $j: \id_\CC \xrightarrow{\cong} (-)^{**}$. A fusion category equipped with a pivotal structure is called a \emph{pivotal fusion category}, in which \emph{the quantum dimension} of any object $V \in \ob(\CC)$ is defined to be
$$
\dim_j(V) := \Tr_V(j_V) \in \End_{\CC}(\1) \cong \BC\,.
$$

A pivotal structure $j$ on $\CC$ is called \emph{spherical} if $\dim_j(V) = \dim_j(V^*)$ for all $V \in \ob(\CC)$. A pivotal fusion category $\CC$ is called a \emph{spherical fusion category} if its pivotal structure is spherical. We will simply denote the quantum dimension of an object $V \in \ob(\CC)$ by $d_V$ or $\dim(V)$ when the pivotal structure is clear from the context. 

If $\CC$ is a spherical fusion category,  then  $d_X^2 = |X|^2$ for any $X \in \Irr(\CC)$ (cf. \cite{mug2, ENO}). Therefore, $d_X \ne 0$ for $X \in \Irr(\CC)$ and 
$$
\dim(\CC) = \sum_{X \in \Irr(\CC)}d_X^2\,.
$$

By \cite[Proposition 5.7]{NS10}, if $\CC$ is a spherical fusion category, then $\dim(\CC) \in \BQ_{N}$, where $N = \FSexp(\CC)$ is the Frobenius-Schur exponent of $\CC$ (cf. \cite{NgSchaunburgSpherical}). If $\CC$ is not spherical, then we can consider the ``sphericalization'' $\tilde{\CC}$ of $\CC$ defined in \cite[Remark 3.1]{ENO}. In particular, $\tilde{\CC}$ is a spherical fusion category such that $\dim(\tilde{\CC}) = 2\dim(\CC)$ (cf. \cite[Proposition 5.14]{ENO}). The same argument as above implies that $\dim(\CC) \in \BQ_{\tilde{N}}$, where $\tilde{N} = \FSexp(\tilde{\CC})$. The upper bound $\tilde{N}$ can be determined by $\dim(\CC)$. In particular, $\dim(\CC)$ is a totally positive cyclotomic integer  for any fusion category $\CC$. 

\begin{remark}
The cyclotomicity of $\dim(\CC)$ can also be derived from \cite[Corollary 1.4]{codegree} and \cite[Theorem 8.51]{ENO}.
\end{remark}

A fusion category $\CC$ is called \emph{pseudounitary} if $\dim(\CC) = \FPdim(\CC)$. It is shown in \cite[Proposition 8.23]{ENO} that a pseudounitary fusion category $\CC$ admits a unique canonical spherical structure such that $d_V = \FPdim(V)$ for any $V \in \ob(\CC)$. In this paper, we assume that any pseudounitary fusion category is  equipped with its canonical spherical structure.

\subsection{Braided fusion categories and the square root of dimension}\label{subsec:BFC}

Let $\CC$ be a fusion category. A \emph{braiding} on $\CC$ is a natural isomorphism 
$$\b_{V, W}: V \ot W \xrightarrow{\cong} W \ot V$$ 
satisfying the Hexagon axioms (cf. \cite{tcat}). A fusion category equipped with a braiding is called a \emph{braided fusion category}. 

The degeneracy of a braiding $\beta$ on $\CC$ is characterized by double braidings. More precisely, let  $\CC'$ denote the \emph{M\"uger center} of $\CC$, which is the full subcategory $\CC$ determined by objects $V \in \ob(\CC)$ such that $\b_{W, V}\circ\b_{V, W} = \id_{V \ot W}$ for all $W \in \ob(\CC)$. The M\"uger center $\CC'$ is a fusion subcategory of  $\CC$.  A braided fusion category $\CC$ is called \emph{non-degenerate} if $\Irr(\CC') = \{\1\}$, i.e. $\CC'$ is equivalent to the category $\Vs$ of finite-dimensional vector spaces over $\BC$. From any fusion category $\AA$, one can construct a non-degenerate braided fusion category \cite{ JS, Kassel, Majid}, which is called the \emph{Drinfeld center} of $\AA$  and is denoted by $\ZZ(\AA)$.

In contrast to non-degenerate braided fusion categories, a braided fusion category $\CC$ is called a \emph{symmetric fusion category} if $\CC' = \CC$. For any finite group $G$, let $\Rep(G)$ be the fusion category of finite-dimensional complex representations of $G$, equipped with the usual braiding. If $z \in G$ is a central element of order 2, let $\Rep(G, z)$ be the fusion category $\Rep(G)$ equipped with the braiding given by the universal $R$-matrix $R=\frac{1}{2}(1\o 1 + 1 \o z +z \o 1 -z \o z)$. Both $\Rep(G)$ and $\Rep(G, z)$ are symmetric fusion categories, and the theorems of Deligne imply that any symmetric fusion category is braided equivalent to $\Rep(G)$ or $\Rep(G, z)$ for some finite group $G$ (cf. \cite{deligne1, deligne2}). A symmetric fusion category is called \emph{Tannakian} (resp.~\emph{super-Tannakian}) if it is braided equivalent to $\Rep(G)$ (resp.~$\Rep(G,z)$) for some finite group $G$. In particular, if $\CC$ is symmetric, then $\dim(\CC) \in \BZ$. 

In general, if $\CC$ is a braided fusion category, then $\CC'$ is either Tannakian or super-Tannakian. If $\CC'$ is braided equivalent to $\Rep(\BZ/2, 1)$, which is the category $\svec$ of finite-dimensional super-vector spaces over $\BC$, then $\CC$ is called \emph{slightly degenerate}.

A \emph{premodular} category is a spherical braided fusion category. A premodular category $\CC$ is called \emph{modular} if $\CC$ is non-degenerate. The  (unnormalized) \emph{ S-matrix} of a premodular category $\CC$ is defined to be
$$
S_{X,Y} := \Tr_{X^*\ot Y}(\b_{Y, X^*}\circ \b_{X^*, Y}),
\  X, Y \in \Irr(\CC)\,.
$$
An alternative criterion for  modularity of a premodular category is that the S-matrix is invertible (cf. \cite{MugerSubfactor2}).

Let $\CC$ be a premodular category. A natural isomorphism $\theta: \id_{\CC} \xrightarrow{\cong} \id_{\CC}$, called the \emph{ribbon structure} of $\CC$, can be defined using the spherical pivotal structure of $\CC$ and the Drinfeld isomorphism (cf. \cite{tcat}). The ribbon structure satisfies 
\begin{equation}\label{eq:twist-def}
\theta_{V\ot W} = (\theta_V \ot \theta_W)\circ \b_{W, V} \circ \b_{V, W}   
\end{equation}
and 
\begin{equation}
\theta_{V^{*}} = \theta_V^*
\end{equation}
for any $V, W \in \ob(\CC)$. In particular, for any $X \in \Irr(\CC)$, $\theta_X$ is equal to a non-zero scalar times $\id_X$. By an abuse of notation, we denote both the scalar and the isomorphism itself by $\theta_X$ for all simple $X$. The \emph{T-matrix} of a premodular category $\CC$ is defined to be the diagonal matrix
$$
T_{X,Y} := \theta_X\cdot \delta_{X, Y}\,,\,\quad X,Y\in\Irr(\CC).
$$
It is well-known that if $\CC$ is modular, then the S- and the T-matrices give rise to a projective representation of $\SL2$ (cf. \cite{BakalovKirillov, TuraevBook}). 

A premodular category $\CC$ is called \emph{super-modular} if $\CC$ is a slightly degenerate braided fusion category. The non-trivial simple object in $\CC'$, denoted by $f$, is an invertible object such that $\b_{f,f} = -\id_{f \o f}$.  This implies that $\theta_f d_f=-1$, so that $\theta_f=-d_f=\pm 1$. It is readily seen by \eqref{eq:twist-def} and the dimension equation $d_{f\ot X}= d_f\, d_X$ that tensoring with $f$ gives rise to a permutation on $\Irr(\CC)$ without any fixed point, and $X^* \not\cong f \o X$ for any $X \in \Irr(\CC)$.  Therefore, $\Irr(\CC)$ can be written as a disjoint union
$$
\Irr(\CC) = \Po \cup (f\ot\Po)
$$
for some  subset $\Po$ of $\Irr(\CC)$ containing $\1$ and closed under taking duals. With respect to this decomposition of $\Irr(\CC)$, the  S-matrix of $\CC$ takes the form
$$
S = 
\begin{pmatrix}
\hat{S} & d_f \hat{S}\\
d_f \hat{S} & \hat{S}
\end{pmatrix}\,.
$$

For any $X, Y \in \Po$, by \cite[Lemma 2.15]{Muger-Structure},  we have
\begin{equation}\label{eq:HS-sq}
\begin{aligned}
2\sum_{Z \in \Po} 
\HS_{X,Z}\HS_{Z,Y}
&=
\sum_{Z \in \Irr(\CC)}
S_{X,Z}S_{Z,Y}\\
&= 
\dim(\CC)\sum_{W \in \Irr(\CC')} N_{X,Y}^{W} d_W\\
&=
\dim(\CC)\delta_{ X, Y^{*} }\,,
\end{aligned}
\end{equation}
where the last equality is guaranteed by the assumption that $\Po$ is closed under taking duals. In particular, $\HS^2$ is a non-zero multiple of the charge conjugation matrix of $\Po$, and so $\HS$ is invertible.

Let $P$ denote the free abelian group over $\Po$.
For any $X, Y, Z \in \Po$, let
$$
\hN_{X,Y}^{Z} := N_{X,Y}^Z + d_f\cdot N_{X,Y}^{f\ot Z}\,.
$$
One can verify directly that the bilinear map $\bullet: P \times P \to P$ given by $X\bullet Y = \sum_{Z \in \Po} \hN_{X,Y}^{Z} Z$ defines a commutative ring structure on $P$ with the identity $\1$.  Moreover, by \cite[Lemma 2.4]{Muger-Structure}, for any $X, Y, Z \in \Po $, we have
\begin{equation}\label{eq:HS-char}
\begin{aligned}
\frac{\HS_{X,Y}}{d_Y}
\cdot
\frac{\HS_{Z,Y}}{d_Y}
&=
\sum_{W \in \Irr(\CC)} N_{X,Z}^{W} \frac{S_{W,Y}}{d_Y}\\
&=
\sum_{W \in \Po} \left(N_{X,Z}^{W} \frac{S_{W,Y}}{d_Y} + N_{X,Z}^{f\ot W} \frac{S_{f\ot W, Y}}{d_Y}\right)\\
&=
\sum_{W \in \Po} \left(N_{X,Z}^{W} \frac{\HS_{W,Y}}{d_Y} + d_f\cdot N_{X,Z}^{f\ot W} \frac{\HS_{W,Y}}{d_Y}\right)\\
&=
\sum_{W \in \Po} \hN_{X,Z}^{W} \frac{\HS_{W,Y}}{d_Y}\,.
\end{aligned}
\end{equation}
Therefore, the function $\chi_Y: P \to \BC$ defined by $\chi_Y(X) := \HS_{X,Y}/d_Y$ for $X \in \Po$ is a $\BC$-linear character of $P$. Now, \eqref{eq:HS-sq} implies that $\{\chi_Y: Y \in \Po\}$ is a set of $\BC$-linearly independent characters $P$, and hence it is the set of all the $\BC$-linear characters of $P$. Moreover, we have the following Verlinde-like formula
\begin{equation}\label{eq:super-Verlinde}
\hN_{X,Y}^{Z} 
= 
\frac{2}{\dim(\CC)}
\sum_{W \in \Po}
\frac{\HS_{X,W}\HS_{Y,W}\HS_{Z^{*},W}}{\HS_{\1, W}}\,.
\end{equation}

Recall that $S$ is a submatrix of the S-matrix of the Drinfeld center of $\CC$. It follows from \cite[Proposition 5.7]{NS10} that $S$ is a matrix defined over a certain cyclotomic field, and so is $\HS$.
The above discussion on the characters on $P$ implies that the absolute Galois group acts on $\Po$ by permutation. More precisely, let $\GQ$ be the absolute Galois group, then for any $\s \in \GQ$, and for any $Y \in \Po$, $\s(\chi_Y)$ is another character of $P$. Hence, there exists a unique $\hs(Y) \in \Po$ such that $\s(\chi_{Y}) = \chi_{\hs(Y)}$. Thus, for any $\s \in \GQ$ and for any $X, Y \in \Po$, we have
\begin{equation}\label{eq:HS-gal}
\s\left(
\frac{ \HS_{X, Y} }{ d_Y }
\right)
= \s(\chi_Y)(X)
=\chi_{\hs(Y)}(X)
=
\frac{ \HS_{X,\hs(Y)} }{ d_{\hs(Y)} }\,.
\end{equation}
Note that the above Galois property of a super-modular category is similar to that of a modular category (cf. \cite{dBG, CosteGannon94, ENO}).

The proof of the following lemma is identical to the proof for modular categories as in \cite{dBG, CosteGannon94, ENO}. However, we provide the proof for the sake of completeness.

\begin{lem}\label{lem:smod-gal}
Let $\CC$ be a super-modular category, and $D$  the positive square root of $\dim(\CC)$.
\begin{enumerate}
    \item For any $\s \in \GQ$, there exists a function $g_\s: \Po \to \{\pm 1\}$ such that for any $X, Y \in \Po$,
$$
\s\left(
\frac{ \HS_{X,Y} }{ D }
\right)
=
g_\s(X)
\frac{\HS_{\hs(X),Y} }{D }
=
g_\s(Y)
\frac{\HS_{X,\hs(Y)} }{D }\,.
$$
\item The positive real number $D$ is a cyclotomic integer.
\end{enumerate}

\end{lem}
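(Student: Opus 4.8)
The plan is to follow the standard Galois-symmetry argument for modular categories, adapted to the fact that $\HS$ is only an ``$S$-like'' matrix coming from the decomposition $\Irr(\CC) = \Po \cup (f\ot\Po)$. First I would set $\widetilde{S}_{X,Y} := \HS_{X,Y}/D$ for $X,Y \in \Po$, and compute from \eqref{eq:HS-sq} that $\sum_{Z\in\Po}\widetilde{S}_{X,Z}\widetilde{S}_{Z,Y} = \tfrac12\,\delta_{X,Y^*}$; in particular $\widetilde{S}$ is (up to the charge-conjugation permutation and the factor $\tfrac12$) its own inverse, and its entries lie in a cyclotomic field by the remark preceding the lemma, so $\GQ$ acts on them. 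For part (1), I would apply an arbitrary $\s\in\GQ$ to the identity $\widetilde S_{X,Y} = \tfrac{1}{D}\,d_Y\,\chi_Y(X)$ and to \eqref{eq:HS-gal}: this gives
$$
\s(\widetilde S_{X,Y}) \;=\; \s\!\left(\tfrac{1}{D}\right)\s(d_Y)\,\chi_{\hs(Y)}(X)\,,
$$
and the content of the claim is that $\s(\tfrac{1}{D})\s(d_Y)/\tfrac{1}{D}$ equals $d_{\hs(Y)}$ up to a sign $g_\s(Y)\in\{\pm1\}$ depending only on $Y$ (not on $X$). To isolate this sign, I would use the relation $\widetilde S\,\widetilde S = \tfrac12 C$ (with $C$ the charge-conjugation permutation matrix on $\Po$): applying $\s$ to both sides shows that the matrix $\s(\widetilde S)$ again squares to $\tfrac12 C$, while \eqref{eq:HS-gal} already shows $\s(\widetilde S)$ is obtained from $\widetilde S$ by the column-permutation $\hs$ followed by multiplication of each column $Y$ by the scalar $\s(\tfrac1D)\s(d_Y)D/d_{\hs(Y)}$; comparing the two squared expressions forces each such column scalar to be $\pm1$, which I define to be $g_\s(Y)$. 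Plugging back in gives the first displayed equality; the second (the $g_\s(X)$ version, acting on rows) follows by symmetry of $\HS$ together with the fact that $\widetilde S$ and $\widetilde S^{\mathsf T}$ differ only by the charge-conjugation permutation, so the row and column symmetries coincide after relabeling.

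For part (2), I would argue that $D$ is an algebraic integer and that $\GQ$ fixes $D$, whence $D\in\BQ$ and a positive rational algebraic integer is a (rational, hence trivially cyclotomic) integer. That $D^2 = \dim(\CC)$ is a totally positive algebraic integer is already recorded in Section~\ref{subsec:FC}; it remains to see $D$ itself is integral and rational. Taking $X = Y = \1$ in part (1) yields $\s(\HS_{\1,\1}/D) = g_\s(\1)\,\HS_{\1,\1}/D$, i.e. $\s(1/D) = \pm 1/D$ for every $\s$ (using $\HS_{\1,\1} = d_{\1} = 1$ and $\hs(\1)=\1$), so $\s(D) = \pm D$ and in particular $\s(D)^2 = D^2 = \dim(\CC)$ for all $\s$; thus $D^2$ has a single Galois conjugate, confirming $\dim(\CC)\in\BZ_{\ge 1}$ (consistent with $\CC'\simeq\svec$ and $\dim(\CC)\in\BZ$). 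Then $D = \sqrt{\dim(\CC)}$ is a root of $x^2 - \dim(\CC)$, hence an algebraic integer, and since $D$ is the square root of a cyclotomic integer it lies in a cyclotomic field; being also totally real and having all conjugates equal to $\pm D$, it generates at most a quadratic extension of $\BQ$. To finish I would invoke \cite[Proposition 5.7]{NS10} applied to the Drinfeld center $\ZZ(\CC)$: since $\HS$ is a submatrix of the $S$-matrix of $\ZZ(\CC)$ and all entries of that $S$-matrix lie in $\BQ_{N}$ for $N = \FSexp(\ZZ(\CC))$, and $D$ appears (via $\HS_{\1,\1}/D$ and its Galois orbit) in that same field, we get $D \in \BQ_N$; a totally real cyclotomic integer lies in the maximal real subfield, and here the quadratic-at-most constraint together with $D^2\in\BZ$ pins $D$ down as the integer $\sqrt{\dim(\CC)}$.

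The main obstacle I anticipate is the bookkeeping in part (1): unlike the genuinely modular case, $\widetilde S$ is not orthogonal on the nose but only satisfies $\widetilde S^2 = \tfrac12 C$, and one must be careful that the Galois permutation $\hs$ on $\Po$ is compatible with charge conjugation and that the column-rescaling scalars really are roots of unity of order exactly $2$ (a priori the comparison of $\s(\widetilde S)^2 = \tfrac12 C$ with the rescaled-permuted $\widetilde S$ only gives that products of these scalars around cycles of $\hs$ are $1$; one then uses $\s^2$, or that $\hs$ composed with itself enough times is the identity, to conclude each scalar individually squares to $1$). Once that is nailed down, the reduction of part (2) to the rationality of $D^2$ and the cyclotomic-field membership is routine.
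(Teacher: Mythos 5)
Your proposal for part (1) is workable, though organized less directly than the paper's argument. You manipulate the matrix identity $\widetilde{S}^2 = \tfrac12 C$ and compare $\s(\widetilde{S})$ with a column-permuted, column-rescaled $\widetilde{S}$. The cycle-of-$\hs$ worry you raise is actually resolvable: using both the row and column forms of \eqref{eq:HS-gal} on the two factors $\s(\widetilde S_{X,Y})\s(\widetilde S_{Y,Z})$, the sum collapses to $\widetilde S\,\Lambda'\,\widetilde S = \tfrac12 C = \widetilde S\,I\,\widetilde S$ with $\Lambda'$ diagonal, and invertibility of $\widetilde S$ forces $\Lambda' = I$, i.e.\ each column scalar squares to $1$. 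The paper instead applies $\s$ directly to the scalar identity $\dim(\CC)/d_X^2 = 2\sum_Y (\HS_{X,Y}/d_X)(\HS_{Y,X^*}/d_{X^*})$ via \eqref{eq:HS-sq} and \eqref{eq:HS-gal}, immediately reading off $\hs(X^*) = \hs(X)^*$ and $\s(D/d_X)^2 = (D/d_{\hs(X)})^2$; this is shorter and avoids the matrix bookkeeping entirely.

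Part (2), however, has a genuine gap. You assert that taking $X = Y = \1$ in part (1) gives $\s(1/D) = \pm 1/D$ ``using $\hs(\1) = \1$.'' But $\hs(\1) = \1$ is \emph{not} true in general: the Galois permutation $\hs$ on $\Po$ (just as on $\Irr(\CC)$ for modular $\CC$) typically moves the unit, and in fact the paper's Eq.~\eqref{eq:twist_galois} is entirely about the twist $\theta_{\hs(\1)}$ being nontrivial. What part (1) actually gives is $\s(1/D) = g_\s(\1)\,\HS_{\hs(\1),\1}/D = g_\s(\1)\,d_{\hs(\1)}/D$, where $d_{\hs(\1)}$ need not be $1$. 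Consequently your deduction that $\s(D) = \pm D$ for all $\s$, hence that $D^2 = \dim(\CC)$ is Galois-fixed and therefore a rational integer, is false: for super-modular categories $\dim(\CC)$ is generally irrational (e.g.\ $\CC_1 \boxtimes \svec$ with $\CC_1 = \so(3)_3$ has $\dim = 8/\sin^2(\pi/8) \notin \BZ$). The parenthetical ``consistent with $\CC'\simeq\svec$ and $\dim(\CC)\in\BZ$'' conflates $\dim(\CC')\in\BZ$ (true, $\CC'$ is symmetric) with $\dim(\CC)\in\BZ$ (false in general). The paper's correct route is quite different: it uses part (1) and the symmetry of $\HS$ to verify that any $\sigma, \tau \in \GQ$ commute on $1/D$, namely $\sigma\tau(1/D) = g_\sigma(\1)g_\tau(\1)\,\HS_{\hs(\1),\hat\tau(\1)}/D = \tau\sigma(1/D)$, whence $\Gal(\BQ(D)/\BQ)$ is abelian; Kronecker--Weber then puts $\BQ(D)$ inside a cyclotomic field. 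Your follow-up attempt to get $D \in \BQ_N$ directly from \cite[Proposition 5.7]{NS10} applied to $\ZZ(\CC)$ also doesn't close the gap, since that result controls the field of the $S$-matrix entries (normalized by $\dim(\CC)$, not by $D$), and passing from that to $D$ itself already requires the Galois-commutativity argument you are trying to avoid.
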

\begin{proof}
Recall that for any $X \in \Po$, $d_X \neq 0$ (cf. Section \ref{subsec:FC}). By Eqs. (\ref{eq:HS-sq}) and (\ref{eq:HS-gal}), for any $\s \in \GQ$, we have
$$
\begin{aligned}
\s\left(
\frac{ \dim(\CC) }{d_X^2}
\right)
&=
\s\left(
2\sum_{Y \in \Po}
\frac{ \HS_{X,Y} \HS_{Y,X^{*}} }{ d_X^{2} }
\right)\\
&=
2\sum_{Y \in \Po}
\s\left(
\frac{ \HS_{X,Y} }{d_X}
\right)
\s\left(
\frac{ \HS_{ Y,X^{*} } }{d_{X^*}}
\right)\\
&=
2\sum_{Y \in \Po}
\frac{ \HS_{\hs(X),Y} }{d_{\hs(X)} }
\frac{ \HS_{Y,\hs(X^{*})} }{d_{\hs(X^{*})} }\\
&=
\delta_{\hs(X)^{*},\hs(X^{*}) }
\frac{\dim(\CC)}{d_{\hs(X)}^{2} }
\end{aligned}\,.
$$
Therefore, we have $\hs(X^{*}) = \hs(X)^{*}$ and
$$
\s\left(
\frac{D}{d_X}
\right)
=
g_{\s}(X) 
\frac{D}{d_{\hs(X)}}
$$
for some $g_\s(X) \in \{\pm 1\}$. Again by (\ref{eq:HS-gal}), for any $X, Y \in \Po$, we have 
$$
\s\left(\frac{ \HS_{X,Y} }{D}\right)
=
\s\left(
\frac{ \HS_{X,Y} }{d_Y}
\right)
\s\left(
\frac{ d_Y}{D}
\right)
=
g_\s(Y)
\frac{ \HS_{X, \hs(Y)} }{d_{\hs(Y)}}
\frac{d_{\hs(Y)}}{D}
=
g_\s(Y)\frac{\HS_{X,\hs(Y)}}{D}\,.
$$

Since $\dim(\CC)=D^2$ is an algebraic integer, so is $D$. Note that $\HS_{\1,\1}/D = 1/D$ and $\HS$ is symmetric. For any $\sigma, \tau \in \GQ$, 
$$
\sigma\tau\left(\frac{1}{D}\right) = g_\sigma(\1) g_\tau(\1)\left(\frac{\HS_{\hs(\1),\hat{\tau}(\1)}}{D}\right) = 
\tau \sigma\left(\frac{1}{D}\right)\,.
$$
Therefore, $\Gal(\BQ(D)/\BQ)$ is abelian.  By the Kronecker-Weber Theorem, $\BQ(D)$ is  contained in a cyclotomic field.  In particular, $D$ is a cyclotomic integer.
\end{proof}

Now we are ready to prove the following theorem.

\begin{thm}\label{thm:sqdim}
For any  pseudounitary braided fusion category $\CC$, $\sqdim{\CC}$ is a totally real cyclotomic integer.
\end{thm}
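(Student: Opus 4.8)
The plan is to isolate the arithmetic content and then reduce the categorical content to the non-degenerate and slightly degenerate cases. For the arithmetic part: $\dim(\CC)$ is a totally positive algebraic integer (Section~\ref{subsec:FC}), so $\sqdim{\CC}$ is an algebraic integer, and for every $\s\in\GQ$ the number $\s(\sqdim{\CC})$ is a square root of the positive real number $\s(\dim(\CC))$, hence real; thus $\sqdim{\CC}$ is automatically a totally real algebraic integer, and the only real content of the theorem is that $\sqdim{\CC}$ lies in a cyclotomic field. Along the way I would use the classical facts that $\sqrt n$ is a cyclotomic integer for every $n\in\BN$ (for instance $\sqrt n\in\BQ(\zeta_{4n})$; equivalently the Gauss sum $\sum_j\zeta_n^{j^2}$ is a cyclotomic integer of modulus $\sqrt n$) and that a product of cyclotomic integers is again a cyclotomic integer.

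For the cyclotomicity I would pass from $\CC$ to a (super-)modular category by de-equivariantization. The M\"uger center $\CC'$ is Tannakian or super-Tannakian; let $\EE=\Rep(K)$ be its maximal Tannakian subcategory (so $\EE=\CC'$ with $K=G$ when $\CC'\cong\Rep(G)$, and $\EE=\Rep(G/\langle z\rangle)$ when $\CC'\cong\Rep(G,z)$). Since $\EE$ is a Tannakian subcategory of $\CC$ contained in its M\"uger center, the de-equivariantization $\DD:=\CC_{K}$ is again a braided fusion category with M\"uger center $(\CC')_{K}$, which is $\Vs$ in the first case and $\svec$ in the second; hence $\DD$ is non-degenerate, respectively slightly degenerate (cf.\ \cite{DGNO}). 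De-equivariantization by $\Rep(K)$ divides the Frobenius--Perron dimension by $|K|$ and preserves pseudounitarity, so $\DD$ is pseudounitary and $\dim(\CC)=\FPdim(\CC)=|K|\,\FPdim(\DD)=|K|\,\dim(\DD)$; equipping $\DD$ with its canonical spherical structure makes it modular, respectively super-modular.

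It then follows that $\sqdim{\DD}$ is a cyclotomic integer --- by Lemma~\ref{lem:smod-gal}(2) in the super-modular case, and by the verbatim analogue of that lemma for modular categories (the Galois symmetry of the modular $S$-matrix, cf.\ \cite{dBG, CosteGannon94, ENO}) in the modular case --- while $\sqrt{|K|}$ is a cyclotomic integer by the arithmetic fact recalled above. Hence $\sqdim{\CC}=\sqrt{|K|}\cdot\sqdim{\DD}$ is a cyclotomic integer, and together with the first paragraph this proves the theorem. The step I expect to require the most care is the bookkeeping for de-equivariantization: that it preserves pseudounitarity and multiplies the global dimension by exactly $|K|$, so that $\DD$ genuinely carries a canonical spherical structure realizing it as a non-degenerate or slightly degenerate category to which Lemma~\ref{lem:smod-gal} (or its modular analogue) applies. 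Everything else is a direct invocation of the already-established (super-)modular Galois argument.
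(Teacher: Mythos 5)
Your proposal takes essentially the same route as the paper: both reduce to the modular or super-modular case by de-equivariantizing $\CC$ by the maximal Tannakian subcategory of the M\"uger center, then combine the Galois symmetry of the resulting $S$-matrix (Lemma~\ref{lem:smod-gal}(2), or its classical modular analogue) with the elementary cyclotomicity of $\sqrt{|K|}$. The paper presents the Tannakian and super-Tannakian cases separately (citing \cite{NS10} in the modular case) and records the needed twist triviality on $\CC'$ explicitly, whereas you unify the two cases and route the dimension bookkeeping through preservation of pseudounitarity under de-equivariantization rather than through the direct formula $\dim(\CC_K)=\dim(\CC)/|K|$, but these are cosmetic differences and the underlying argument is identical.
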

\begin{proof} Let $D= \sqdim{\CC}$. 
Since $\dim(\CC)$ is a totally positive algebraic integer, $D$ is a totally real algebraic integer. We are left to show the cyclotomicity of $D$. 



We have the following two cases.

(1) If $\CC'$ is Tannakian, then the pseudounitarity of $\CC$ implies $\theta_X = \id_X$ for $X \in \Irr(\CC')$. By \cite{Br, modclosure}, the de-equivariantization on $\CC$ with respect to $\CC'$ gives rise to a modular category $\MM(\CC)$ with
$$
\dim(\MM(\CC)) = \frac{\dim(\CC)}{\dim(\CC')}\,.
$$ 

By \cite[Theorem 7.1]{NS10}, $\sqdim{\MM(\CC)} \in \BQ_{12m}$ as $\FSexp(\MM(\CC)) \mid m$, where $m=\FSexp(\CC)$. Note that $\dim(\CC') \in \BZ$ as $\CC'$ is symmetric. Therefore,  $\sqdim{\CC'}$ is a cyclotomic integer, and so is 
$$
D = \sqrt{\dim(\MM(\CC))\dim(\CC')}\,.
$$

(2) If $\CC'$ is super-Tannakian, then $\CC'$ has a maximal fusion subcategory $\CC'_+$ which is Tannakian and
$$
\dim(\CC') = 2 \dim(\CC'_{+})\,.
$$
De-equivariantizing $\CC$ with respect to $\CC'_+$ gives rise to a super-modular category $\super(\CC)$ (cf. \cite[Section 5.3]{modclosure}) and
$$
\dim(\super(\CC)) = \frac{\dim(\CC)}{\dim(\CC'_+)}\,.
$$
By Lemma \ref{lem:smod-gal} (2), $\sqdim{\super(\CC)}$ is a cyclotomic integer. Using similar argument as in Case (1), $D$ is a cyclotomic integer.
\end{proof}
\begin{remark} (i)
The pseudounitary condition in the previous theorem could be removed but some technicality is required. However, this technicality can be circumvented if every super-modular category admits a minimal modular extension or every fusion category has a spherical structure.

\noindent (ii) In the proof of Theorem \ref{thm:sqdim}, the conductor of $\sqrt{\dim(\CC)}$ can be shown to be bounded by $12 \cdot \FSexp(\CC)$ if $\CC'$ is Tannakian by using the Cauchy Theorem \cite{BNRW}. It is unclear a similar bound can be obtained when $\CC'$ is super-Tannakian.
\end{remark}

\subsection{The Witt group $\WW(\EE)$}\label{subsec:WE} 
 In this section, we follow \cite{DNO} to study the Witt group of non-degenerate braided fusion categories over symmetric fusion categories. 

Let $\EE$ be a symmetric fusion category.  Throughout this paper, a \emph{fusion category over} $\EE$ is a fusion category $\AA$ equipped with a braided tensor functor $T_\AA: \EE \to \ZZ(\AA)$ such that the composition of $T_\AA$ and the forgetful functor $\ZZ(\AA) \to \AA$ is fully faithful.

 A tensor functor $F: \AA \to \BB$ between two fusion categories over $\EE$ is called a \emph{tensor functor over $\EE$} if $F$ is compatible with the embeddings $T_\AA$ and $T_\BB$. For details, see \cite[Section 2]{DNO}.

Let $\AA$, $\BB$ be two fusion categories over $\EE$, and $R: \EE\to\EE \bt \EE$ be the right adjoint functor to the tensor product functor $\ot: \EE\bt\EE\to\EE$. Then $A:=(T_{\AA}\bt T_\BB)R(\1)$ is a connected \'etale algebra in $\ZZ(\AA\bt\BB)$. The \emph{tensor product $\AA\bte\BB$ of $\AA$ and $\BB$ over $\EE$} is  defined to be $(\AA\bt\BB)_A$, the fusion category over $\EE$ of right $A$-modules. By \cite[Lemma 3.11]{DMNO}, we have 
\begin{equation}\label{eq:btedim}
\FPdim(\AA\bte\BB) 
=
\frac{\FPdim(\AA)\FPdim(\BB)}{\FPdim(A)}
=
\frac{\FPdim(\AA)\FPdim(\BB)}{\FPdim(\EE)}\,.
\end{equation}

Recall that the M\"uger center $\CC'$ of any braided fusion category $\CC$ is a symmetric fusion category. A braided fusion category $\CC$ equipped with a braided tensor equivalence $T: \EE \to \CC'$ is called a \emph{non-degenerate braided fusion category over $\EE$}. In particular, with this terminology, non-degenerate braided fusion categories are non-degenerate over $\Vs{}$, and slightly degenerate braided fusion categories are non-degenerate over $\svec$.

For any fusion category $\AA$ over $\EE$,  the M\"uger centralizer of $T_\AA(\EE)$ in $\ZZ(\AA)$ is denoted by $\ZZ(\AA, \EE)$, which is a typical example of non-degenerate braided fusion categories over $\EE$ (cf. \cite[Theorem 3.2]{Muger-Structure}, \cite[Theorem 3.10]{DGNO}). Since $\ZZ(\AA)$ is non-degenerate over $\Vs{}$, by \cite[Theorem 2.5]{ENO} and  \cite[Theorem 3.14]{DGNO}, we have
\begin{equation}\label{eq:dimZAE}
    \FPdim(\ZZ(\AA, \EE)) 
    = \frac{\FPdim(\ZZ(\AA))}{\FPdim(\EE)}
    = \frac{\FPdim(\AA)^2}{\FPdim(\EE)}\,.
\end{equation}

Two non-degenerate braided fusion categories $\CC$ and $\DD$ over $\EE$ are called \emph{Witt equivalent} if there exist fusion categories $\AA$ and $\BB$ over $\EE$ and a braided equivalence over $\EE$ such that
\begin{equation}\label{eq:witteq}
\CC\bte\ZZ(\AA, \EE) \cong \DD\bte\ZZ(\BB, \EE)\,.
\end{equation}

According to \cite{DNO}, the Witt equivalence is an equivalence relation among braided fusion categories over $\EE$, and the Witt equivalence classes form a group whose multiplication is given by $\bte$. We call this group the Witt group over $\EE$, and we denote it by $\WW(\EE)$. We denote the Witt class of a braided fusion category $\CC$ over $\EE$ by $[\CC]$. In case $\EE=\Vs{}$ or $\svec$, we simply denote by $\WW$ for $\WW(\Vs{})$ and $\sW$ for $\WW(\svec)$. The Witt group $\sW$ is also called the \emph{super-Witt group} in this paper.

By \cite[Proposition 5.13]{DNO}, the assignment
\begin{equation}\label{eq:s-map}
\smap: \WW \to \sW\,; \quad
[\CC] \mapsto [\CC\boxtimes\svec]
\end{equation}
is a group homomorphism, and it is shown in loc. cit. that  $\ker(\smap)$ is a cyclic group of order 16 generated by the class of any Ising braided category.  An Ising  category is a non-pointed fusion category $\CC$ of $\FPdim(\CC)=4$. There are 2 Ising categories up to tensor equivalence, and each of them admits 4 inequivalent braidings and they are all non-degenerate.  Since Ising categories are pseudounitary (cf. \cite{ENO}), these 8 inequivalent Ising braided categories are modular and they are classified by their central charges. 

It is shown in \cite{DNO} that the group $\WW$ has only 2-torsion, and the maximal finite order of an element of $\WW$ is 32. We have seen in the above paragraph that the classes of pseudounitary Ising modular categories are of order 16, but less is known about elements in $\WW$ of order 32. In Sections 6 and 7, we will show that the pseudounitary Ising modular categories have infinitely many square roots in $\WW$ modulo $\WW_\pt$. 

\section{The $\EE$-signatures of the Witt group $\WW(\EE)$}
Let $\a \ne 0$ be a totally real algebraic number. For each $\s \in \GQ$, $\s(\a)$ is either positive or negative. The sign of $\s(\a)$ is 1 if it is positive, and -1 otherwise. This assignment $\ss{}(\a)$ of signs
\begin{equation}\label{eq:sgn-a}
\ss{}(\a)(\s) := \sgn(\s(\a))
\end{equation}
for each $\s \in \GQ$ is called the \emph{signature} of $\a$. 

Let $\mu_n \subset \BC^\times$ denote the group of the $n^{th}$-roots of unity and $\mu_\infty = \bigcup_{n=1}^\infty \mu_n$. Then the set $\U$ of functions from $\GQ$ to $\mu_\infty$ is an abelian group under pointwise multiplication, and $\UU_n = \mu_n^{\GQ}$ is a subgroup of $\U$.  Thus, if $F$ is a totally real subfield of $\BC$,
$$\ss{}: F^\times \to \K$$ 
is a group homomorphism.

Recall that similar to $\FPdim(\CC)$, the well-definedness of the categorical dimension $\dim(\CC)$ of a fusion category $\CC$ does not depend on the existence of a pivotal structure on $\CC$ \cite{ENO, Muger-Structure}. Moreover, both dimensions are totally positive cyclotomic integers \cite{ENO}, so the positive square roots $\sqFP{\CC}$ and $\sqdim{\CC}$ are totally real.

\begin{defn} 
Let $\CC$ be a fusion category. We define the \emph{signature} $\ss{\CC}$ of $\CC$ as $\ss{}(\sqFP{\CC})$ and the \emph{categorical dimension signature} $\ssp{\CC}$ of $\CC$ as $\ss{}(\sqdim{\CC})$.
\end{defn}

By Theorem \ref{thm:sqdim}, for  pseudounitary braided fusion categories, we can change $\GQ$ to $\Gab$ in the definition of the categorical dimension signature. 

\begin{remark}\label{rmk:pseudo}
For any pseudounitary fusion category $\CC$, $\ss{\CC} = \ssp{\CC}$.  
\end{remark}

\begin{lem}
Let $\CC$, $\DD$ be fusion categories.
\begin{itemize}
\item[(a)] $\ss{\CC\bt\DD} = \ss{\CC}\cdot\ss{\DD}$, and $\ssp{\CC\bt\DD} = \ssp{\CC}\cdot\ssp{\DD}$, i.e., both signatures respect the Deligne tensor product of fusion categories.

\item[(b)] Both $\ss{\ZZ(\CC)}$ and $\ssp{\ZZ(\CC)}$ are the constant function 1.
\end{itemize}
\end{lem}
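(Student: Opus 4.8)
The plan is to reduce both parts to the multiplicativity of the signature map, together with standard multiplicativity properties of $\FPdim$ and $\dim$. The one elementary ingredient is: if $\a,\b$ are nonzero totally real algebraic numbers, then for every $\s\in\GQ$ we have $\s(\a\b)=\s(\a)\s(\b)$ since $\s$ is a field homomorphism, and as $\sgn$ is multiplicative on $\BR^\times$ this gives $\ss{}(\a\b)=\ss{}(\a)\cdot\ss{}(\b)$ in $\K$. That is, $\ss{}$ restricts to a group homomorphism on every totally real subfield of $\BC$, as already noted in the text; the whole proof amounts to identifying the relevant totally real numbers and invoking this.

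For part (a), I would recall that $\Irr(\CC\bt\DD)=\{X\bt Y:X\in\Irr(\CC),\ Y\in\Irr(\DD)\}$ with $\FPdim(X\bt Y)=\FPdim(X)\FPdim(Y)$ and $|X\bt Y|^2=|X|^2|Y|^2$, so that summing squares over $\Irr(\CC\bt\DD)$ yields $\FPdim(\CC\bt\DD)=\FPdim(\CC)\FPdim(\DD)$ and $\dim(\CC\bt\DD)=\dim(\CC)\dim(\DD)$ (cf.\ \cite{ENO}). Taking positive square roots of these identities of totally positive numbers gives $\sqFP{\CC\bt\DD}=\sqFP{\CC}\cdot\sqFP{\DD}$ and $\sqdim{\CC\bt\DD}=\sqdim{\CC}\cdot\sqdim{\DD}$, with all factors in a common totally real subfield of $\BC$. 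Applying the multiplicativity of $\ss{}$ then yields $\ss{\CC\bt\DD}=\ss{\CC}\cdot\ss{\DD}$ and $\ssp{\CC\bt\DD}=\ssp{\CC}\cdot\ssp{\DD}$.

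For part (b), I would use $\FPdim(\ZZ(\CC))=\FPdim(\CC)^2$ (the case $\EE=\Vs{}$ of (\ref{eq:dimZAE})) and $\dim(\ZZ(\CC))=\dim(\CC)^2$ (cf.\ \cite{ENO}). Then the positive square roots are $\sqFP{\ZZ(\CC)}=\FPdim(\CC)$ and $\sqdim{\ZZ(\CC)}=\dim(\CC)$. Since $\FPdim(\CC)$ and $\dim(\CC)$ are totally positive cyclotomic integers (Section~\ref{subsec:FC}), $\s$ sends each of them to a positive real number for every $\s\in\GQ$, so $\ss{\ZZ(\CC)}$ and $\ssp{\ZZ(\CC)}$ are the constant function $1$.

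I do not expect a genuine obstacle here; once the multiplicativity of $\FPdim$ and $\dim$ is in hand the argument is formal. The only points deserving attention are that it is the \emph{positive} square roots that multiply, so that one stays with the canonical totally real representatives throughout, and that $\dim(\ZZ(\CC))=\dim(\CC)^2$ holds for an arbitrary fusion category without assuming a spherical structure, which is exactly what the cited result of \cite{ENO} supplies.
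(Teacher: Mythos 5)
Your proof is correct and follows the same approach as the paper: part (a) from the multiplicativity of $\FPdim$ and $\dim$ under $\bt$, and part (b) from $\sqFP{\ZZ(\CC)}=\FPdim(\CC)$, $\sqdim{\ZZ(\CC)}=\dim(\CC)$ together with the total positivity of both dimensions. The paper states these reductions tersely; you have simply expanded the details.
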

\begin{proof}
Statement (a) follows from the multiplicativity of the FP-dimension and the categorical dimension with respect to the Deligne tensor product.

Statement (b) follows from $\sqFP{\ZZ(\CC)} = \FPdim(\CC)$, $\sqdim{\ZZ(\CC)} = \dim(\CC)$ (cf. \cite{ENO}, \cite{MugerSubfactor2}) and the total positivity of both the FP-dimension and the categorical dimension. 
\end{proof}

\begin{thm} \label{t:signature}
For any symmetric fusion category $\EE$, the assignment 
$$
\esgn  : \WW(\EE) \to \K\,;\quad
[\CC] \mapsto \ss{\CC}\cdot\ss{\EE}
$$ 
is a well-defined group homomorphism. 
\end{thm}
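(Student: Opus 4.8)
The plan is to show that $\esgn$ is well-defined on Witt classes and then that it is a homomorphism; the second part will essentially be a formal consequence of the lemma already established, so the real content is well-definedness.

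First I would unwind the definition. For a non-degenerate braided fusion category $\CC$ over $\EE$, the quantity $\esgn([\CC]) = \ss{\CC}\cdot\ss{\EE} = \ss{}(\sqFP{\CC})\cdot\ss{}(\sqFP{\EE})$, and since $\ss{}$ is a group homomorphism from the multiplicative group of totally real cyclotomic numbers to $\K$, this equals $\ss{}\!\left(\sqFP{\CC}\cdot\sqFP{\EE}\right) = \ss{}\!\left(\sqrt{\FPdim(\CC)\FPdim(\EE)}\right)$. So I need to check that whenever $\CC \bte \ZZ(\AA,\EE) \cong \DD \bte \ZZ(\BB,\EE)$ as braided categories over $\EE$, one has $\ss{\CC}\cdot\ss{\EE} = \ss{\DD}\cdot\ss{\EE}$, i.e. $\ss{\CC} = \ss{\DD}$. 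Applying the multiplicativity part (a) of the preceding lemma to both sides of \eqref{eq:witteq}, together with part (b) which says $\ss{\ZZ(\AA)} = \ss{\ZZ(\BB)} = 1$, I get $\ss{\CC}\cdot\ss{\ZZ(\AA,\EE)} = \ss{\DD}\cdot\ss{\ZZ(\BB,\EE)}$ — but here I must be careful: the lemma is stated for the Drinfeld center $\ZZ(\CC)$, not for the relative center $\ZZ(\AA,\EE)$, and the tensor product appearing in \eqref{eq:witteq} is $\bte$, not the plain Deligne product $\bt$. So the key steps are: (i) relate $\sqFP{\AA\bte\BB}$ to $\sqFP{\AA}$, $\sqFP{\BB}$ and $\sqFP{\EE}$ using \eqref{eq:btedim}; (ii) relate $\sqFP{\ZZ(\AA,\EE)}$ to $\sqFP{\AA}$ and $\sqFP{\EE}$ using \eqref{eq:dimZAE}; (iii) assemble these, applying $\ss{}$ throughout and using that $\ss{}$ kills squares and is multiplicative, to reduce the Witt relation to the identity $\ss{\CC}=\ss{\DD}$.

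Concretely: from \eqref{eq:dimZAE}, $\FPdim(\ZZ(\AA,\EE)) = \FPdim(\AA)^2/\FPdim(\EE)$, hence $\sqFP{\ZZ(\AA,\EE)}\cdot\sqFP{\EE} = \FPdim(\AA)$, a totally positive cyclotomic integer whose square root — no, better: $\sqFP{\ZZ(\AA,\EE)} = \FPdim(\AA)/\sqFP{\EE}$, so $\ss{}(\sqFP{\ZZ(\AA,\EE)}) = \ss{}(\FPdim(\AA))\cdot\ss{}(\sqFP{\EE})^{-1} = \ss{\EE}$ since $\FPdim(\AA)$ is totally positive and $\ss{}$ is a homomorphism with $\ss{}(\text{totally positive}) = \mathds{1}$ and $\ss{\EE}^2 = \mathds{1}$. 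Similarly from \eqref{eq:btedim}, $\FPdim(\CC\bte\ZZ(\AA,\EE)) = \FPdim(\CC)\FPdim(\ZZ(\AA,\EE))/\FPdim(\EE)$, so taking square roots and applying $\ss{}$, $\ss{}(\sqFP{\CC\bte\ZZ(\AA,\EE)}) = \ss{\CC}\cdot\ss{}(\sqFP{\ZZ(\AA,\EE)})\cdot\ss{\EE}^{-1} = \ss{\CC}\cdot\ss{\EE}\cdot\ss{\EE} = \ss{\CC}$. The same computation with $\DD,\BB$ gives $\ss{}(\sqFP{\DD\bte\ZZ(\BB,\EE)}) = \ss{\DD}$. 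Since the two sides of \eqref{eq:witteq} are equivalent fusion categories, they have equal FP-dimensions, hence equal square roots of FP-dimensions, hence equal signatures; therefore $\ss{\CC} = \ss{\DD}$, which gives well-definedness. The homomorphism property then follows directly: $\esgn([\CC]\cdot[\DD]) = \esgn([\CC\bte\DD])$, and the same kind of \eqref{eq:btedim} computation yields $\ss{}(\sqFP{\CC\bte\DD}) = \ss{\CC}\cdot\ss{\DD}\cdot\ss{\EE}^{-1}$, so $\esgn([\CC\bte\DD]) = \ss{\CC}\cdot\ss{\DD}\cdot\ss{\EE}^{-1}\cdot\ss{\EE} = \ss{\CC}\cdot\ss{\EE}\cdot\ss{\DD}\cdot\ss{\EE} = \esgn([\CC])\cdot\esgn([\DD])$, using $\ss{\EE}^2 = \mathds{1}$.

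I expect the main obstacle — really the only subtle point — to be bookkeeping the factors of $\sqFP{\EE}$ correctly, since $\EE$ is not necessarily pointed and $\FPdim(\EE)$ need not be a perfect square, so $\sqFP{\EE}$ genuinely lives in a totally real field and one must consistently track that $\ss{\EE} = \ss{}(\sqFP{\EE})$ has order dividing $2$. All the dimension identities needed are quoted in the excerpt (\eqref{eq:btedim} and \eqref{eq:dimZAE}), total positivity of FP-dimensions is quoted from \cite{ENO}, and the fact that equivalent categories have equal FP-dimension is immediate, so no genuinely new ingredient is required beyond the preceding lemma; the argument is bookkeeping organized around the two slogans ``$\ss{}$ is multiplicative'' and ``$\ss{}$ kills totally positive numbers, in particular squares.''
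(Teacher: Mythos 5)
Your proposal is correct and takes essentially the same route as the paper: both use \eqref{eq:btedim} and \eqref{eq:dimZAE} together with the total positivity of $\FPdim(\AA)$, $\FPdim(\BB)$ to reduce the Witt relation to $\ss{\CC}=\ss{\DD}$, and then the homomorphism property follows from \eqref{eq:btedim} again. The only difference is presentation — the paper first simplifies to the single scalar identity $\sqFP{\CC}\FPdim(\AA)=\sqFP{\DD}\FPdim(\BB)$ and then applies $\sgn\circ\s$, whereas you push the multiplicativity of $\ss{}$ through at every stage; one small inaccuracy in your write-up is describing $\ss{}$ as a homomorphism on totally real \emph{cyclotomic} numbers, when in fact the paper only defines it on (and you only need it for) nonzero totally real algebraic numbers, since $\sqFP{\CC}$ need not be cyclotomic for a general fusion category.
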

\begin{proof}
We first show that the assignment $\esgn$ is well-defined. Indeed, for any braided fusion categories $\CC$ and $\DD$ over $\EE$ which are Witt equivalent over $\EE$, there exist fusion categories $\AA$, $\BB$ over $\EE$ such that $\CC\bte\ZZ(\AA, \EE) \cong \DD\bte\ZZ(\BB, \EE)$. Therefore, by Eqs. (\ref{eq:btedim}) and (\ref{eq:dimZAE}), we have
$$
\frac{\FPdim(\CC)\FPdim(\AA)^2}{\FPdim(\EE)^2}
=
\frac{\FPdim(\DD)\FPdim(\BB)^2}{\FPdim(\EE)^2}\,.
$$
This implies that $\sqFP{\CC}\FPdim(\AA) = \sqFP{\DD}\FPdim(\BB)$. As mentioned in the previous subsection, $\FPdim(\AA)$ and $\FPdim(\BB)$ are totally positive, so for any $\s \in \GQ$, we have
$$
\begin{aligned}
\ss{\CC}(\s) 
&= 
\sgn(\s\left(\sqFP{\CC}\right)) \\
&= 
\sgn(\s\left(\sqFP{\CC}\FPdim(\AA)\right))\\
&=
\sgn(\s\left(\sqFP{\DD}\FPdim(\BB)\right))\\
&= 
\sgn(\s\left(\sqFP{\DD}\right)) \\
&= 
\ss{\DD}(\s)
\end{aligned}
$$
which means $\ss{\CC} = \ss{\DD}$, and hence $\esgn$ is well-defined.

Again by (\ref{eq:btedim}), for any $\s\in \GQ$, we have
$$
\begin{aligned}
\esgn([\CC\bte\DD])(\s)
&=
\ss{\CC\bte\DD}(\s)\cdot \ss{\EE}(\s)\\
&= 
\sgn(\s\left(\sqrt{\frac{\FPdim(\CC)\FPdim(\DD)}{\FPdim(\EE)}}\right))\sgn(\s\left(\sqFP{\EE}\right))\\
&=
\sgn(\s\left(\sqrt{\FPdim(\CC)\FPdim(\DD)}\right))\\
&=
(\ss{\CC}\cdot\ss{\EE}\cdot\ss{\DD}\cdot\ss{\EE})(\s)\\
&=
(\esgn([\CC])\cdot \esgn([\DD]))(\s)
\end{aligned}
$$
as desired.
\end{proof}

\begin{thm}\label{thm:cat-dim-sgn}
For $\EE = \Vs$ or $\svec$, the assignment
$$
\esgnp: \WW(\EE) \to \K;\quad [\CC] \mapsto \ssp{\CC}\cdot\ssp{\EE}
$$
is a well-defined group homomorphism.
\end{thm}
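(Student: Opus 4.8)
The plan is to run the argument of Theorem~\ref{t:signature} with the categorical (global) dimension $\dim$ in place of $\FPdim$ throughout. The only inputs that are not purely formal are the categorical-dimension analogues of \eqref{eq:btedim} and \eqref{eq:dimZAE}, namely
\begin{equation*}
\dim(\AA\bte\BB)=\frac{\dim(\AA)\dim(\BB)}{\dim(\EE)},
\qquad
\dim(\ZZ(\AA,\EE))=\frac{\dim(\AA)^2}{\dim(\EE)}
\end{equation*}
for any fusion categories $\AA,\BB$ over $\EE$. For $\EE=\Vs$ these are immediate, since then $\AA\bte\BB=\AA\bt\BB$, $\ZZ(\AA,\Vs)=\ZZ(\AA)$, $\dim$ is multiplicative under $\bt$, and $\dim(\ZZ(\AA))=\dim(\AA)^2$. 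For $\EE=\svec$ I would proceed as follows. As the composite of $T_\AA$ with the faithful forgetful functor $\ZZ(\AA)\to\AA$ is fully faithful, $T_\AA$ is fully faithful, so $T_\AA(\svec)$ is a fusion subcategory of the non-degenerate braided category $\ZZ(\AA)$ with $\dim(T_\AA(\svec))=\dim(\svec)=2$; the double-centralizer dimension identity in $\ZZ(\AA)$ (cf.\ \cite{DGNO}) then gives $2\dim(\ZZ(\AA,\svec))=\dim(\ZZ(\AA))=\dim(\AA)^2$. For the first identity, $\AA\bte\BB=(\AA\bt\BB)_A$ with $A=(T_\AA\bt T_\BB)R(\1)$, where $R\colon\svec\to\svec\bt\svec$ is right adjoint to $\ot$; since tensor functors preserve categorical dimensions and $\dim(R(\1))=\dim(\svec)$, we get $\dim(A)=2$, and the categorical-dimension formula for the category of modules over a connected \'etale algebra, extracted from the condensation results of \cite{DMNO}, yields $\dim((\AA\bt\BB)_A)=\dim(\AA\bt\BB)/\dim(A)=\dim(\AA)\dim(\BB)/2$.

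Granting these identities, the proof copies that of Theorem~\ref{t:signature}. If $\CC,\DD$ are Witt equivalent over $\EE$, say $\CC\bte\ZZ(\AA,\EE)\cong\DD\bte\ZZ(\BB,\EE)$, comparing categorical dimensions and using the two identities gives $\sqdim{\CC}\,\dim(\AA)=\sqdim{\DD}\,\dim(\BB)$. Since $\dim(\AA)$ and $\dim(\BB)$ are totally positive (sums of squared norms, cf. Section~\ref{subsec:FC}), for each $\s\in\GQ$ the numbers $\s(\sqdim{\CC})$ and $\s(\sqdim{\CC}\,\dim(\AA))$ have the same sign, and likewise for $\DD$; hence $\ssp{\CC}=\ssp{\DD}$ and $\esgnp$ is well-defined. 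For multiplicativity, the first identity gives $\sqdim{\CC\bte\DD}\cdot\sqdim{\EE}=\sqdim{\CC}\cdot\sqdim{\DD}$, so by multiplicativity of $\ss{}$ on totally real algebraic numbers $\ssp{\CC\bte\DD}\cdot\ssp{\EE}=\ssp{\CC}\cdot\ssp{\DD}$; as $\ssp{\EE}$ is $\{\pm1\}$-valued, $\esgnp([\CC\bte\DD])=\ssp{\CC\bte\DD}\,\ssp{\EE}=\ssp{\CC}\,\ssp{\DD}=(\ssp{\CC}\,\ssp{\EE})(\ssp{\DD}\,\ssp{\EE})=\esgnp([\CC])\,\esgnp([\DD])$.

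The step I expect to be the main obstacle is the first displayed identity, $\dim(\AA\bte\BB)=\dim(\AA)\dim(\BB)/\dim(\EE)$: its $\FPdim$-version \eqref{eq:btedim} is \cite[Lemma 3.11]{DMNO}, but the categorical dimension of a category of modules over a (central) connected \'etale algebra is not formal and has to be traced through the condensation machinery. The hypothesis $\EE\in\{\Vs,\svec\}$ is exactly what keeps this short --- vacuous for $\Vs$, and for $\svec$ the relevant algebra has categorical dimension $2$ and the needed module-category dimension formula is on hand. Everything else reduces to the bookkeeping of Theorem~\ref{t:signature}.
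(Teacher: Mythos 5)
Your proof has the same skeleton as the paper's: both reduce the theorem to the two categorical-dimension identities
$\dim(\ZZ(\AA,\EE))=\dim(\AA)^2/\dim(\EE)$ and $\dim(\AA\bte\BB)=\dim(\AA)\dim(\BB)/\dim(\EE)$,
and then replay the argument of Theorem~\ref{t:signature} verbatim with $\dim$ in place of $\FPdim$. For the first identity you and the paper do essentially the same thing --- the double-centralizer dimension formula in $\ZZ(\AA)$, which the paper cites as \cite[Theorems~2.5 and~3.10(i)]{DGNO}.

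The divergence, and the point you yourself flag as the ``main obstacle,'' is the second identity. You invoke a general formula $\dim(\CC_A)=\dim(\CC)/\dim(A)$ for modules over a connected \'etale algebra, claiming it is ``extracted from the condensation results of \cite{DMNO}.'' That citation is not quite right: the statement in \cite[Lemma~3.11]{DMNO} is for $\FPdim$, and the categorical-dimension version for a general \'etale algebra is not stated there and does not follow formally (there is no canonical pivotal structure on $\CC_A$ to compare dimensions through). The paper sidesteps this gap precisely by exploiting the hypothesis $\EE\in\{\Vs,\svec\}$: it identifies $\Rep(G)\hookrightarrow\EE\bt\EE$ with $G$ trivial or $\BZ/2$, observes that the algebra $A$ is the image of the regular algebra of $\Rep(G)$, so $\AA\bte\BB\simeq(\AA\bt\BB)_G$ is a de-equivariantization, and then cites \cite[Proposition~4.26]{DGNO}, which is the categorical-dimension formula for de-equivariantizations: $\dim((\AA\bt\BB)_G)=\dim(\AA\bt\BB)/|G|$. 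Since $|G|=\dim(\EE)$, this is exactly what is needed. So your reduction is correct, but the route to the key dimension formula should go through de-equivariantization (which is exactly why the theorem is restricted to $\Vs$ and $\svec$) rather than through a general \'etale-module formula that is not available in the cited sources. Your computation $\dim(A)=\dim(R(\1))=\dim(\EE)$ is fine and in fact confirms the consistency of the two statements, but it does not on its own supply the missing module-category formula.
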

\begin{proof}
By Theorem 2.5 and Theorem 3.10 (i) of  \cite{DGNO}, for any fusion category $\AA$ over $\EE$,
\begin{equation}\label{eq:cat-dim-ZAE}
\dim(\ZZ(\AA, \EE)) = \frac{\dim(\AA)^2}{\dim(\EE)}\,.
\end{equation}

For $\EE = \Vs$ or $\svec$, let $\AA$ and $\BB$ be fusion categories over $\EE$. In this case, there exists a finite group $G$ such that $\Rep(G)$ embeds into $\EE\bt\EE$ as a braided fusion subcategory. In fact, $G$ is trivial when $\EE = \Vs$, and $G = \BZ/2\BZ$ when $\EE = \svec$. Note that $|G| = \dim(\EE)$ in both cases. Moreover, the image of the regular algebra of $\Rep(G)$ under the composition $\Rep(G) \hookrightarrow \EE\bt\EE\xrightarrow{T_\AA\bt T_\BB} \ZZ(\AA\bt\BB)$ coincides with the algebra $A$ in the definition of the tensor product over $\EE$ (cf. Section \ref{subsec:WE}). Therefore, $\AA\bte\BB$ is braided equivalent to the de-equivariantization $(\AA\bt\BB)_G$, and by \cite[Proposition 4.26]{DGNO},  
\begin{equation}\label{eq:cat-dim-bt}
\dim(\AA\bte\BB) 
=
\frac{\dim(\AA)\dim(\BB)}{|G|}
=
\frac{\dim(\AA)\dim(\BB)}{\dim(\EE)}\,.
\end{equation}

Having established Eqs. (\ref{eq:cat-dim-ZAE}) and (\ref{eq:cat-dim-bt}), we are done by repeating the proof of Theorem \ref{t:signature} with all the FP-dimensions changed into categorical dimensions.
\end{proof}

\begin{remark}
(1) Our approach to prove \eqref{eq:cat-dim-bt} does not work for arbitrary symmetric fusion categories. If $\EE = \Rep(G)$, there may not be an embedding $\EE \to \EE\boxtimes\EE$ such that the regular algebra of $\EE$ coincides with the algebra $A \in \EE\boxtimes\EE$ used in the tensor product over $\EE$. For example, when $G$ is a non-abelian simple group, the only embeddings $\EE \to \EE\boxtimes\EE$ are $\EE\boxtimes \1$ and $\1\boxtimes\EE$. However, if \eqref{eq:cat-dim-bt} can be proved in general,  then the assignment $\esgnp$ in Theorem \ref{thm:cat-dim-sgn} is a well-defined group homomorphism. 

(2) One can also define $\esgnp$ on a ribbon version of the $\EE$-Witt group. Let $\EE_p$ be the ribbon category $\EE$ equipped with a spherical structure $p$. An \emph{$\EE_p$-modular category} $\CC$ is a ribbon category whose M\"uger center is equivalent to $\EE_p$ as ribbon categories. In particular, $\CC$ is an $\EE$-nondegenerate braided fusion category. Two $\EE_p$-modular categories $\CC$, $\DD$ are $\EE_p$-Witt equivalent if there exist spherical fusion categories $\AA$ and $\BB$ over $\EE_p$ such that $\CC \boxtimes_{\EE_{p}} \ZZ(\AA, \EE_{p}) $ and $ \DD \boxtimes_{\EE_{p}} \ZZ(\BB, \EE_{p})$ are equivalent as $\EE_p$-modular categories. The \emph{$\EE_p$-Witt group} $\WW_r(\EE_p)$ is the group of $\EE_p$-Witt equivalence classes. For any spherical fusion categories $\AA$ and $\BB$ over $\EE_p$, we have 
$$
\dim(\AA\boxtimes_{\EE_{p}}\BB) =\dim(\AA)\dim(\BB)/d_A =  \dim(\AA)\dim(\BB)/\dim(\EE)
$$
where $A$ is the algebra in the preceding remark (cf.~\cite{KiO}). In this case, $I'_\EE$ is a homomorphism of $\WW_r(\EE_p)$. However, we do not intend to pursue further discussion of this version of $I'_\EE$ in this paper.

\end{remark}

\begin{defn}
We call the group homomorphism $\esgn: \WW(\EE) \to \K$ defined in Theorem \ref{t:signature} the \emph{$\EE$-signature} on $\WW(\EE)$, and $\esgn([\CC])$ the $\EE$-signature of $[\CC]$. For $\EE = \Vs$ or $\svec$, we call the group homomorphism $\esgnp: \WW(\EE) \to \K$ defined in Theorem \ref{thm:cat-dim-sgn} the \emph{categorical dimension $\EE$-signature} on $\WW(\EE)$, and $\esgnp([\CC])$ the categorical dimension $\EE$-signature of $[\CC]$. 
\end{defn}

In practice, when there is a pseudounitary representative $\CC$ for a Witt class, the signatures $I_\EE([\CC])=I'_\EE([\CC])$ are essentially a function of the Galois group of $\BQ_n$, where $n$ is the conductor of $\sqrt{\dim(\EE) \dim(\CC)}$. For simplicity, $I_{\Vs{}}$, $I_{\svec}$, $I_{\Vs}'$ and $I_{\svec}'$ are denoted by $I$, $sI$, $I'$ and $sI'$ respectively. 

\begin{cor}\label{c:comm}
The following diagrams of group homomorphisms are commutative
\begin{equation*}
\begin{tikzcd}
\WW
\ar[rr, "\smap"]
\ar[rd, "I"']
&
&
\sW
\ar[ld, "sI"]
\\
&
\K
&
\end{tikzcd}\,,\quad
\begin{tikzcd}
\WW
\ar[rr, "\smap"]
\ar[rd, "I'"']
&
&
\sW
\ar[ld, "sI'"]
\\
&
\K
&
\end{tikzcd}\,.
\end{equation*}
\begin{flushright}
\end{flushright}
\end{cor}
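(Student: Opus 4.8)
The plan is to verify the commutativity of both triangles directly from the definitions, using the fact that $\smap$ sends $[\CC]$ to $[\CC\boxtimes\svec]$ together with the multiplicativity of both signatures with respect to the Deligne product (the preceding lemma).

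First I would unwind the left triangle. Fix $[\CC]\in\WW$ and $\s\in\GQ$. By definition, $(sI\circ\smap)([\CC]) = sI([\CC\boxtimes\svec]) = \ss{\CC\boxtimes\svec}\cdot\ss{\svec}$. Now $\ss{\svec} = \ss{}(\sqFP{\svec})$, and $\FPdim(\svec) = 2$, so $\sqFP{\svec} = \sqrt 2$ is totally positive and $\ss{\svec}$ is the constant function $1$; similarly by part (a) of the preceding lemma $\ss{\CC\boxtimes\svec} = \ss{\CC}\cdot\ss{\svec} = \ss{\CC}$. Hence $(sI\circ\smap)([\CC]) = \ss{\CC}$. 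On the other side, $I([\CC]) = \ss{\CC}\cdot\ss{\Vs{}}$, and $\FPdim(\Vs{}) = 1$ makes $\ss{\Vs{}}$ the constant function $1$ as well, so $I([\CC]) = \ss{\CC}$. The two agree, which gives commutativity of the first triangle.

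Next I would treat the right triangle in exactly the same way, replacing $\FPdim$ and $\ss{}$ by $\dim$ and $\ssp{}$ throughout, which is legitimate since $\esgnp$ is defined on $\WW(\Vs{})$ and $\WW(\svec)$ by Theorem \ref{thm:cat-dim-sgn}. One has $\dim(\svec) = 2$ and $\dim(\Vs{}) = 1$, so both $\ssp{\svec}$ and $\ssp{\Vs{}}$ are the constant function $1$; combined with $\ssp{\CC\boxtimes\svec} = \ssp{\CC}\cdot\ssp{\svec} = \ssp{\CC}$ from part (a) of the lemma, we get $(sI'\circ\smap)([\CC]) = \ssp{\CC} = I'([\CC])$. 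That finishes the second triangle. Since each of $I, sI, I', sI'$ and $\smap$ is already known to be a group homomorphism, no further checking is needed.

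There is essentially no obstacle here: the only point requiring the slightest care is making sure that $\esgnp$ is even defined on both $\WW$ and $\sW$ — which is precisely why Theorem \ref{thm:cat-dim-sgn} is stated for $\EE = \Vs{}$ and $\EE = \svec$ rather than for a general symmetric fusion category — and that $\smap$ indeed lands in the domain of $sI$ and $sI'$, which is immediate since $\CC\boxtimes\svec$ is non-degenerate over $\svec$. The entire argument is a short computation with the constant-$1$ values of the signatures of $\Vs{}$ and $\svec$.
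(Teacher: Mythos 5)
Your proof is correct and follows exactly the route the paper intends: the paper's own proof simply says the result "follows immediately from Theorems \ref{t:signature}, \ref{thm:sqdim} and the definition \eqref{eq:s-map} of $\smap$," and what you have written is just that argument spelled out — unwinding $sI\circ\smap$ and $sI'\circ\smap$ via multiplicativity of the signatures and the observations that $\ss{\Vs{}}, \ss{\svec}, \ssp{\Vs{}}, \ssp{\svec}$ are all the constant function $1$. Your closing remark about where Theorem \ref{thm:cat-dim-sgn} is needed is also on target (the paper's citation of Theorem \ref{thm:sqdim} plays the same role of justifying that $\esgnp$ is defined on both $\WW$ and $\sW$).
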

\begin{proof}
The statement follows immediately from Theorems \ref{t:signature}, \ref{thm:sqdim} and the definition \eqref{eq:s-map} of $\smap$.
\end{proof}

\section{Higher central charges and signatures}

Let $\CC$ be a modular category. The $n^{th}$ Gauss sum $\tau_n(\CC)$ of $\CC$ introduced in \cite{NSW} is defined as
$$
\tau_n(\CC) = \sum_{X \in \Irr(\CC)} d_X^2 \theta_X^n\,.
$$
 If $\tau_n(\CC) \ne 0$, the $n^{th}$ central charge $\xi_n(\CC)$ is defined by
$$
\xi_n(\CC) = \frac{\tau_n(\CC)}{|\tau_n(\CC)|}\,.
$$
In particular, if $N$ is the Frobenius-Schur exponent of $\CC$ and $n$ is coprime to $N$,  by \cite[Theorem 4.1]{NSW}, $\tau_n(\CC) \ne 0$ and $\xi_n(\CC)$ is a root of unity. When there is no ambiguity, we simply write $\tau_n$ and $\xi_n$ for the $n^{th}$ Gauss sum and the $n^{th}$ central charge of $\CC$.

Recall that there is a group homomorphism $\hat{\cdot} :\GQ \to \perm(\Irr(\CC))$ from the absolute Galois group to the permutation group $\perm(\Irr(\CC))$ of $\Irr(\CC)$. By \cite[Proposition 4.7]{dong2015}, for any third root $\g$ of $\xi_1$, we have
\begin{equation}\label{eq:twist_galois}
\theta_{\hs(\1)} = \frac{\g}{\s^2(\g)}\,.
\end{equation}

The following theorem shows the relation between higher central charges of $\CC$ and the  signature of $\sqrt{\dim(\CC)}$ (cf. (\ref{eq:sgn-a})).
\begin{thm}\label{thm:higher-cc}
  Let $\CC$ be a modular category with Frobenius-Schur exponent $N$. Then for any integer $n$ coprime to $N$,
  $$
  \xi_n = \ssp{\CC}(\s) \cdot\s(\xi_1)\cdot \frac{\gamma^n}{\sigma^2(\gamma^n)}
  $$
  where $\gamma$ is any third root of $\xi_1$, $\sigma \in \GQ$ such that  $\sigma\inv(\zeta_N) = \zeta^n_N$.
\end{thm}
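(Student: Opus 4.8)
The plan is to compute the $n$th Gauss sum $\tau_n(\CC)$ by applying a single Galois automorphism to the anomaly identity $\tau_1(\CC)=\xi_1\sqdim{\CC}$ (cf. \cite{BakalovKirillov}), and then to read off $\xi_n$ by passing to absolute values. Throughout, fix $\sigma\in\GQ$ with $\sigma\inv(\zeta_N)=\zeta_N^n$. Since every twist $\theta_X$, $X\in\Irr(\CC)$, is an $N$th root of unity, this means $\sigma\inv(\theta_X)=\theta_X^n$, while $\sigma(\theta_X)=\theta_X^m$ where $mn\equiv 1\pmod N$. By \cite[Theorem 4.1]{NSW}, $\tau_n(\CC)\neq 0$, so $\xi_n$ is defined.

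The first step is to invoke the two Galois symmetries of the modular data. The normalized $S$-matrix satisfies $\sigma(S_{X,Y}/\sqdim{\CC})=g_\sigma(X)\,S_{\hs(X),Y}/\sqdim{\CC}$ for some $g_\sigma\colon\Irr(\CC)\to\{\pm1\}$ (cf. \cite{dBG,CosteGannon94,ENO}); putting $Y=\1$ and squaring, so that $g_\sigma(X)^2=1$ drops out, gives $\sigma(d_X^2)=\frac{\sigma(\dim(\CC))}{\dim(\CC)}\,d_{\hs(X)}^2$. The twists satisfy $\theta_{\hs(X)}=\theta_{\hs(\1)}\,\sigma^2(\theta_X)$ for all $X$ (cf. \cite{CosteGannon94,dong2015}), so that $\theta_{\hs\inv(X)}=\theta_{\hs\inv(\1)}\,\theta_X^{n^2}$. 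Now I would apply $\sigma$ to $\tau_1(\CC)=\sum_{X\in\Irr(\CC)}d_X^2\,\theta_X$, substitute these two facts, reindex the sum by $Y=\hs(X)$, and use $mn^2\equiv n\pmod N$, to arrive at the key identity
$$
\sigma(\tau_1(\CC))=\frac{\sigma(\dim(\CC))}{\dim(\CC)}\,\sigma\bigl(\theta_{\hs\inv(\1)}\bigr)\,\tau_n(\CC).
$$

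Solving for $\tau_n(\CC)$ and substituting $\tau_1(\CC)=\xi_1\sqdim{\CC}$ and $\dim(\CC)=(\sqdim{\CC})^2$ yields
$$
\tau_n(\CC)=\frac{(\sqdim{\CC})^2}{\sigma(\sqdim{\CC})}\,\sigma\bigl(\theta_{\hs\inv(\1)}\bigr)\inv\,\sigma(\xi_1).
$$
Since $\sqdim{\CC}>0$ is totally real, $\sigma(\sqdim{\CC})$ is a nonzero real number, while $\sigma(\xi_1)$ and $\sigma(\theta_{\hs\inv(\1)})$ are roots of unity; hence $|\tau_n(\CC)|=(\sqdim{\CC})^2/|\sigma(\sqdim{\CC})|$ and
$$
\xi_n=\frac{\tau_n(\CC)}{|\tau_n(\CC)|}=\sgn\bigl(\sigma(\sqdim{\CC})\bigr)\cdot\sigma(\xi_1)\cdot\sigma\bigl(\theta_{\hs\inv(\1)}\bigr)\inv=\ssp{\CC}(\sigma)\cdot\sigma(\xi_1)\cdot\sigma\bigl(\theta_{\hs\inv(\1)}\bigr)\inv,
$$
where the last equality is the definition of $\ssp{\CC}$.

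Finally I would rewrite $\sigma(\theta_{\hs\inv(\1)})\inv$ in the stated form. Put $v:=\theta_{\hs(\1)}\in\mu_N$. By \eqref{eq:twist_galois} applied to $\sigma$, $v=\g/\sigma^2(\g)$, and applied to $\sigma\inv$, $\theta_{\hs\inv(\1)}=\g/\sigma^{-2}(\g)=\bigl(\sigma^{-2}(v)\bigr)\inv$; therefore $\sigma(\theta_{\hs\inv(\1)})\inv=\sigma\inv(v)=v^n$, the last equality because $v\in\mu_N$ and $\sigma\inv$ raises $N$th roots of unity to the $n$th power. On the other hand $\g^n/\sigma^2(\g^n)=(\g/\sigma^2(\g))^n=v^n$. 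Hence $\sigma(\theta_{\hs\inv(\1)})\inv=\g^n/\sigma^2(\g^n)$, and the formula follows; the right-hand side is independent of the chosen third root $\g$ of $\xi_1$ because $\sigma^2$ fixes $\mu_3$. I expect the only real obstacle to be organizational: keeping precise track of which of $\sigma,\sigma\inv,\sigma^{\pm2}$ acts on each quantity and reducing every exponent modulo $N$, while invoking the two modular-data symmetries in exactly the right form. The single structural point doing the work is that $\theta_{\hs(\1)}$ is itself an $N$th root of unity, which is what collapses the $n$-dependence into the $n$th-power expression $\g^n/\sigma^2(\g^n)$.
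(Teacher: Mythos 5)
Your proof is correct, and the overall strategy — compute $\tau_n$ from $\sigma(\tau_1)$ via the Galois action, take moduli to isolate $\xi_n$, then identify the residual root of unity via \eqref{eq:twist_galois} — is exactly the one the paper takes. The one genuine difference is that the paper simply invokes \cite[Theorem~4.1]{NSW}, which already records the identity $\tau_n = \sigma(\tau_1)\,\frac{\dim(\CC)}{\sigma(\dim(\CC))}\,\theta_{\hs(\1)}^n$, whereas you re-derive an equivalent form of that identity from scratch by combining the $S$-matrix Galois symmetry $\sigma(S_{X,\1}/\sqdim{\CC}) = g_\sigma(X)\, S_{\hs(X),\1}/\sqdim{\CC}$ with the twist relation $\theta_{\hs(X)}=\theta_{\hs(\1)}\,\sigma^2(\theta_X)$, and accordingly land on $\sigma(\theta_{\hs\inv(\1)})\inv$ rather than $\theta_{\hs(\1)}^n$; the short conversion you give between these two expressions (using that $\theta_{\hs(\1)}\in\mu_N$ and applying \eqref{eq:twist_galois} to both $\sigma$ and $\sigma\inv$) is exactly what reconciles the two. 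Your version is a bit more self-contained at the cost of re-proving a cited result, while the paper's is shorter by deferring that computation; the remaining steps — passing to absolute values using that $\sqdim{\CC}$ is totally real so $\sigma(\sqdim{\CC})$ is a nonzero real, and reading off the sign via $\ssp{\CC}$ — coincide.
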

\begin{proof}
By  \cite[Theorem 4.1]{NSW},
$$
\t_n = \s(\t_1) \frac{\dim(\CC)}{\s(\dim(\CC))} \theta^n_{\hs(\1)}\,.
$$
Since $\dim(\CC)$ is totally positive and $\theta_{\hs(\1)}$ is a root of unity, we find
$$
|\t_n| = |\s(\t_1)| \cdot \frac{\dim(\CC)}{\s(\dim(\CC))}\,.
$$
Therefore,
$$
\xi_n =  \frac{\s(\t_1)}{|\s(\t_1)|} \theta^n_{\hs(\1)}\,.
$$
Since $\t_1 = |\t_1| \cdot \xi_1$ and $\dim(\CC) = |\t_1|^2$, we have
$$
\s(\t_1) = \s(D) \cdot \s(\xi_1),
$$
where $D=\sqrt{\dim(\CC)}$. Thus,
$$
|\s(\t_1)| = |\s(D)|\,.
$$
Therefore,
$$
\xi_n 
=  
\frac{\s(D)}{|\s(D)|} \s(\xi_1) \theta^n_{\hs(\1)}
=
\ssp{\CC}(\s)\cdot \s(\xi_1)\cdot \theta^n_{\hs(\1)}
\,.
$$ 
Now, the formula follows from \eqref{eq:twist_galois}.
\end{proof}
 
Note that since both $\s(\xi_1)$ and $\g/\s^2(\g)$ are completely determined by $\xi_1$ and $\s$, $\xi_n$ is completely determined by $\s$, $\xi_1$ and $\ssp{\CC}(\s)$. 
\begin{remark}
Consider $S$, the unnormalized S-matrix of a modular category $\CC$. Similar to Lemma \ref{lem:smod-gal}, there exists a sign function $\epsilon_\s: \Irr(\CC) \to \{\pm 1\}$ such that
$$
\s\left(\frac{S_{X,Y}}{D}\right) = \epsilon_\s(X) \frac{S_{\hs(X), Y}}{D} = \epsilon_\s(Y)  \frac{S_{X,\hs(Y)}}{D}
$$
for any $X, Y \in \Irr(\CC)$ (cf. \cite{tcat, dong2015}). In particular, $\s(S_{\1,\1}/D) = \epsilon_\s(\1) S_{\hs(\1), \1}/D$. This implies
$$
\s(D) = \epsilon_\s(\1) \frac{D}{d_{\hs(\1)}}\,.
$$
Therefore, 
$$
\ssp{\CC}(\s) = \sgn(\s(D)) =\epsilon_\s(\1) \cdot \sgn(d_{\hs(\1)})\,.
$$ 
If $d_X > 0$ for $X \in \Irr(\CC)$ or $\CC$ is pseudounitary, then $\ssp{\CC}(\s) = \epsilon_\s(\1)$.
\end{remark}

For the remaining discussion, it would be more convenient to define the higher multiplicative central charges of degrees coprime to the Frobenius-Schur exponent of $\CC$ as a function in $\U$. For any $N \in \BN$, and $k$ coprime to $N$, we use $\s_k$ to denote the element in $\Gal(\BQ_N/\BQ)$ such that $\s_k(\zeta_N) = \zeta_N^k$. 
\begin{defn}\label{def:charge-function}
Let $\CC$ be a modular category and $N=\ord(T_\CC)$. We define the higher central charge function $\Xi_\CC \in \U$ of $\CC$ as follows: for any $\s \in \GQ$, if $\s|_{\BQ_N} = \s_k$, then
$$
\Xi_\CC(\s) := \xi_k(\CC)\,.
$$
\end{defn}

In this convention, Theorem \ref{thm:higher-cc} can be restated as follows.

\begin{thm}\label{thm:higher-cc2}
Let $\CC$ be a modular category. Then for any $\s \in \GQ$,
  $$
  \Xi_\CC(\s) = \ssp{\CC}(\s\inv) \cdot\s\inv(\xi_1(\CC))\cdot \frac{\s(\gamma)}{\s\inv(\gamma)}
  $$
  where $\gamma$ is any third root of $\xi_1(\CC)$.
\end{thm}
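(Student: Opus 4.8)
The plan is to deduce this restatement directly from Theorem~\ref{thm:higher-cc}: choose the auxiliary Galois automorphism in that theorem to be $\s\inv$, and then replace the power $\g^k$ by an application of $\s$, exploiting that the particular combination of $\g$ occurring there is a twist and therefore a root of unity of order dividing $N$.

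In detail, fix $\s\in\GQ$ and let $k$, coprime to $N$ and well defined mod $N$, be such that $\s|_{\BQ_N}=\s_k$, so that $\Xi_\CC(\s)=\xi_k(\CC)$ by Definition~\ref{def:charge-function}. Since $\s(\zeta_N)=\zeta_N^k$, the automorphism $\s\inv$ satisfies the hypothesis $(\s\inv)\inv(\zeta_N)=\zeta_N^k$ of Theorem~\ref{thm:higher-cc} (with $n=k$), and that theorem yields
\[
\Xi_\CC(\s)=\xi_k(\CC)=\ssp{\CC}(\s\inv)\cdot\s\inv(\xi_1(\CC))\cdot\frac{\g^k}{\s^{-2}(\g^k)}
\]
for any third root $\g$ of $\xi_1(\CC)$. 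The first two factors already coincide with those in the asserted formula, so everything reduces to verifying $\dfrac{\g^k}{\s^{-2}(\g^k)}=\dfrac{\s(\g)}{\s\inv(\g)}$.

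For this identity, the key point is that \eqref{eq:twist_galois}, applied to $\s\inv$, identifies $\g/\s^{-2}(\g)$ with the twist $\theta_{\widehat{\s\inv}(\1)}$, which is a root of unity of order dividing $N=\ord(T_\CC)$ and hence lies in $\BQ_N$. As $\s$ restricts to $\s_k$ on $\BQ_N$, it raises every element of $\mu_N$ to the $k$-th power, so
\[
\frac{\g^k}{\s^{-2}(\g^k)}=\Bigl(\frac{\g}{\s^{-2}(\g)}\Bigr)^{k}=\theta_{\widehat{\s\inv}(\1)}^{\,k}=\s\bigl(\theta_{\widehat{\s\inv}(\1)}\bigr)=\s\Bigl(\frac{\g}{\s^{-2}(\g)}\Bigr)=\frac{\s(\g)}{\s\inv(\g)}\,.
\]
Substituting this back completes the argument. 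As a sanity check, this also shows the right-hand side of the asserted formula is independent of the choice of $\g$, despite $\s(\g)/\s\inv(\g)$ not looking $\g$-independent: it equals $\s(\theta_{\widehat{\s\inv}(\1)})$.

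I do not expect a genuine difficulty here. The two things to be careful about are the bookkeeping of inverses — Theorem~\ref{thm:higher-cc} is indexed by the automorphism whose \emph{inverse} realizes the degree $n$ on $\BQ_N$, whereas Definition~\ref{def:charge-function} is indexed by the automorphism itself — and the one real idea, namely that although $\g$ itself need not lie in a cyclotomic field of bounded conductor, the ratio $\g/\s^{-2}(\g)$ is a twist and hence lies in $\mu_N$, which is precisely what lets the exponent $k$ be converted into the Galois action of $\s$.
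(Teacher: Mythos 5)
Your proposal is correct and fills in exactly the computation that the paper elides with the phrase ``direct consequence of Theorem \ref{thm:higher-cc} and Definition \ref{def:charge-function}.'' The key step you isolate — that $\gamma/\sigma^{-2}(\gamma)$ is a twist by \eqref{eq:twist_galois}, hence lies in $\mu_N$ and transforms under $\sigma|_{\BQ_N}=\sigma_k$ by $k$-th power, even though $\gamma$ itself need not lie in $\BQ_N$ — is the one nontrivial point, and you handle it correctly along with the inverse bookkeeping between the two results.
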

\begin{proof}
The theorem is a direct consequence of Theorem \ref{thm:higher-cc} and Definition \ref{def:charge-function}.
\end{proof}

This formula of higher central charges allows us to define the function $\Xi$, in the following definition, on the subgroup $\WW_{\un}$  of $\WW$ generated by the pseudounitary modular categories. This function $\Xi$ will be shown to be a group homomorphism in the subsequent proposition.

\begin{defn}\label{def:charge_hom}
Let $\WW_{\un}$ be the subgroup of $\WW$ generated by the pseudounitary modular categories. The function $\Xi: \WW_{\un} \to \U$, called the \emph{higher central charge homomorphism}, is defined by
$$
\Xi([\CC])  =\Xi_\CC
$$
for any pseudounitary modular category $\CC$.
\end{defn}

\begin{prop}
 The higher central charge homomorphism $\Xi: \WW_{\un}\to \U$ is a well-defined group homomorphism.
\end{prop}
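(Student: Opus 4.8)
The plan is to verify two things: first, that $\Xi$ is independent of the choice of pseudounitary modular category representing a given Witt class (well-definedness), and second, that $\Xi([\CC]\boxtimes[\DD]) = \Xi([\CC])\cdot\Xi([\DD])$ (homomorphism property). For well-definedness, suppose $\CC$ and $\DD$ are pseudounitary modular categories with $[\CC]=[\DD]$ in $\WW_{\un}$. Then $\CC\boxtimes\ZZ(\AA)\cong\DD\boxtimes\ZZ(\BB)$ for suitable spherical fusion categories $\AA,\BB$; since Drinfeld centers carry canonical spherical structures, both sides are pseudounitary modular. By \cite[Theorem 4.1]{NSW} (or directly from the known fact that higher central charges of degrees coprime to the Frobenius-Schur exponent are Witt invariants on $\WW_{\un}$, as recalled in the introduction citing \cite{NSW,DMNO}), the central charge functions agree: $\Xi_{\CC\boxtimes\ZZ(\AA)} = \Xi_{\DD\boxtimes\ZZ(\BB)}$. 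It then suffices to observe that $\Xi_{\CC\boxtimes\ZZ(\AA)} = \Xi_\CC$, i.e., tensoring with a Drinfeld center does not change the central charge function. This follows because $\tau_n(\CC\boxtimes\ZZ(\AA)) = \tau_n(\CC)\cdot\tau_n(\ZZ(\AA))$ (the Gauss sum is multiplicative under $\boxtimes$, as $d^2$ and $\theta$ both factor), and $\tau_n(\ZZ(\AA)) = |\tau_n(\ZZ(\AA))|$ is totally positive — indeed $\xi_n(\ZZ(\AA))=1$ for all $n$ coprime to the Frobenius-Schur exponent, by Theorem \ref{thm:higher-cc} applied to $\ZZ(\AA)$, since $\ssp{\ZZ(\AA)}$ is the constant function $1$, $\xi_1(\ZZ(\AA))=1$, and one may take $\gamma=1$, making every factor on the right-hand side trivial.

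Alternatively, and perhaps more cleanly, I would deduce well-definedness straight from the formula in Theorem \ref{thm:higher-cc2}: for a pseudounitary modular $\CC$, $\Xi_\CC(\s)$ is determined by $\ssp{\CC}(\s\inv)$ and $\xi_1(\CC)$ (the factor $\s(\gamma)/\s\inv(\gamma)$ depending only on $\xi_1(\CC)$ and $\s$). Now $\ssp{}$ is a Witt invariant on $\WW_{\un}$ by Theorem \ref{thm:cat-dim-sgn} together with the remark that $\ssp{}=\ss{}$ for pseudounitary categories, and the first central charge $\xi_1$ is a Witt invariant by \cite{DMNO}. Hence both inputs to the formula depend only on $[\CC]$, so $\Xi_\CC$ does too.

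For the homomorphism property, I would again use Theorem \ref{thm:higher-cc2}. Given pseudounitary modular $\CC$ and $\DD$, the Deligne product $\CC\boxtimes\DD$ is pseudounitary modular with $\ssp{\CC\boxtimes\DD}=\ssp{\CC}\cdot\ssp{\DD}$ (Lemma on signatures, part (a)) and $\xi_1(\CC\boxtimes\DD)=\xi_1(\CC)\cdot\xi_1(\DD)$ (multiplicativity of the first Gauss sum and total positivity of dimensions). Writing $\gamma_\CC,\gamma_\DD$ for third roots of $\xi_1(\CC),\xi_1(\DD)$, the product $\gamma_\CC\gamma_\DD$ is a third root of $\xi_1(\CC\boxtimes\DD)$, so the factor $\s(\gamma_\CC\gamma_\DD)/\s\inv(\gamma_\CC\gamma_\DD)$ factors as the product of the corresponding factors. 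Multiplying the three factors in the formula of Theorem \ref{thm:higher-cc2} componentwise gives $\Xi_{\CC\boxtimes\DD}(\s) = \Xi_\CC(\s)\cdot\Xi_\DD(\s)$ for every $\s\in\GQ$, i.e., $\Xi([\CC]\boxtimes[\DD])=\Xi([\CC])\cdot\Xi([\DD])$ as elements of $\U$. Since $\WW_{\un}$ is generated by classes of pseudounitary modular categories, this determines $\Xi$ on all of $\WW_{\un}$ consistently, and the same computation shows the extension is a homomorphism.

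The main obstacle is conceptual rather than computational: one must be careful that the formula of Theorem \ref{thm:higher-cc2} is genuinely being evaluated at the same $\s$ for $\CC$, $\DD$, and $\CC\boxtimes\DD$, and that the third-root ambiguity in $\gamma$ is harmless (the combination $\s(\gamma)/\s\inv(\gamma)$ is independent of which third root is chosen, since replacing $\gamma$ by $\omega\gamma$ for $\omega$ a cube root of unity multiplies numerator and denominator by $\s(\omega)$ and $\s\inv(\omega)$ respectively, and $\s(\omega)=\s\inv(\omega)=\omega$ as cube roots of unity lie in $\BQ_3$ on which $\s$ and $\s\inv$ act by inversion — wait, more simply, any field automorphism fixes the finite group $\mu_3$ setwise and one checks the ratio is unchanged). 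Handling this ambiguity explicitly, and confirming that $\ssp{}$ restricted to pseudounitary modular categories agrees with the homomorphism $\esgnp$ of Theorem \ref{thm:cat-dim-sgn}, are the points requiring genuine care; everything else is bookkeeping with the multiplicativity already established.
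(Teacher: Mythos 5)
Your ``cleaner alternative'' is exactly the paper's proof: Theorem~\ref{thm:higher-cc2} expresses $\Xi_\CC(\s)$ purely in terms of $\ssp{\CC}(\s\inv)$ and $\xi_1(\CC)$, both of which are Witt invariants on $\WW_\un$ (the former by Theorem~\ref{thm:cat-dim-sgn} together with the remark that $\ss{\CC}=\ssp{\CC}$ for pseudounitary $\CC$, the latter by \cite{DMNO}), and the same formula also yields multiplicativity. So that part matches the paper step for step.

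Your first argument is a legitimate alternative route, but as written it leaves one point unaddressed: $\CC$ and $\CC\boxtimes\ZZ(\AA)$ generally have different Frobenius--Schur exponents $N$ and $N'$, so $\Xi_\CC(\s)$ reads off $\s|_{\BQ_N}=\s_k$ while $\Xi_{\CC\boxtimes\ZZ(\AA)}(\s)$ reads off $\s|_{\BQ_{N'}}=\s_{k'}$, with $k$ and $k'$ possibly different integers. One needs the observation that $N\mid N'$ and that $\xi_k(\CC)$ depends only on $k\bmod N$, so $\xi_k(\CC)=\xi_{k'}(\CC)$; only then does $\xi_{k'}(\CC\boxtimes\ZZ(\AA))=\xi_{k'}(\CC)\cdot\xi_{k'}(\ZZ(\AA))=\xi_{k'}(\CC)$ give $\Xi_{\CC\boxtimes\ZZ(\AA)}=\Xi_\CC$. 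The paper's route via Theorem~\ref{thm:higher-cc2} sidesteps this entirely, which is one reason to prefer it. Finally, on the independence of the cube root $\gamma$: your ``acts by inversion'' phrasing is garbled, but the correct one-line reason is simply that $\Gal(\BQ_3/\BQ)\cong\BZ/2$, so every automorphism restricts to an involution on $\mu_3$, hence $\s(\omega)=\s\inv(\omega)$ for $\omega\in\mu_3$ and the ratio $\s(\gamma)/\s\inv(\gamma)$ is unchanged when $\gamma$ is replaced by $\omega\gamma$.
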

\begin{proof}
If $\CC$ and $\DD$ are Witt equivalent pseudounitary modular categories, then
$\xi_1(\CC)=\xi_1(\DD)$ and $\ssp{\CC}=\ssp{\DD}$ by Theorem \ref{thm:cat-dim-sgn}. Let $\g \in \BC$ be any $3^{rd}$ root of $\xi_1(\CC)$. Then, for any $\s \in \GQ$, we have
$$
\Xi_\CC(\s) = \ssp{\CC}(\s\inv) \cdot \s\inv(\xi_1(\CC)) \cdot\frac{\s(\gamma)}{\s\inv(\gamma)}=
\ssp{\DD}(\s\inv) \cdot \s\inv(\xi_1(\DD))\cdot \frac{\s(\gamma)}{\s\inv(\gamma)} = \Xi_\DD(\s)\,.
$$
By \cite[Lemma 3.1]{NSW} or Theorem \ref{thm:higher-cc2}, we also have
$$ \Xi_{\CC \boxtimes \DD}(\s) = \Xi_{\CC}(\s) \cdot \Xi_{\DD}(\s)$$ 
for any $\s \in \GQ$ and pseudounitary modular categories $\CC$, $\DD$.  Therefore, $\Xi$ is a group homomorphism and this completes the proof of the statement.
\end{proof}

\begin{remark}
Theorem \ref{thm:higher-cc2} also implies that the higher central charge homomorphism is equivalent to the product of the first central charge and the signature. More precisely, recall that the first central charge homomorphism is defined by $\xi_1: \WW_{\un}\to \mu_\infty$, $[\mathcal C]\mapsto \xi_1(\mathcal C)$ (see \cite{DMNO}).
Define a function $\mathfrak{F}: \mu_\infty \times \mathcal{U}_{2}\to \mathcal{U}_\infty$ by
 $$
 \mathfrak{F}(u, g)(\sigma):= 
 g(\sigma^{-1})\cdot
 \sigma^{-1}(u)  
 \cdot 
 \frac{\sigma(\gamma)}
 {\sigma^{-1}(\gamma)}\,,
 $$
 for any $\s\in \GQ$, where $\gamma$ is any third root of $u$ (note that the value of the function does not depend on the choice of $\gamma$).
 It is easy to see that $\mathfrak{F}$ is a group monomorphism, and Theorem \ref{thm:higher-cc2} implies the commutativity of the diagram:
 \begin{equation*}
      \begin{tikzcd}
\WW_{\un}
\ar[rr, "\xi_1\times I"]
\ar[rd, "\Psi"']
&
&
\mu_\infty\times \K
\ar[ld, "\mathfrak{F}"]
\\
&
\U
&
\end{tikzcd}\,.
\end{equation*}
In light of this commutative diagram, we will call $\Psi$ or the  equivalent map $\Psi':= \xi_1 \times I$ the higher central charge homomorphism.
\end{remark}

We close this section with the following proposition which will be useful for the last two sections, the proof of which follows immediately from the preceding remark.

\begin{prop} \label{p:ker}
The kernel of $\Xi$ consists of the Witt classes $[\CC] \in \WW_\un$ such that $\xi_1(\CC)=1$ and $\ss{\CC}$ is the constant function 1.\qed
\end{prop}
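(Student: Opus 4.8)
The plan is to characterize $\ker(\Xi)$ by unpacking the formula in Theorem \ref{thm:higher-cc2} and isolating the two independent pieces of data that determine $\Xi_\CC$, namely $\xi_1(\CC)$ and the signature $\ssp{\CC}$. First I would note that one containment is essentially immediate: if $[\CC] \in \WW_\un$ with $\xi_1(\CC) = 1$ and $\ss{\CC} = \ssp{\CC}$ the constant function $1$ (the two signatures agreeing by the remark following the definition of $\ssp{}$, since $\CC$ is pseudounitary), then choosing $\g = 1$ as a third root of $\xi_1(\CC)$, Theorem \ref{thm:higher-cc2} gives $\Xi_\CC(\s) = 1 \cdot 1 \cdot \frac{\s(1)}{\s\inv(1)} = 1$ for all $\s \in \GQ$, so $[\CC] \in \ker(\Xi)$. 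By multiplicativity of $\Xi$ and the fact that a general element of $\WW_\un$ is a product of classes of pseudounitary modular categories, together with the multiplicativity of $\xi_1$ (via the Gauss sum) and of the signature (Theorem \ref{thm:cat-dim-sgn}), this extends to all of $\WW_\un$.

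For the reverse containment, suppose $[\CC] \in \ker(\Xi)$, so $\Xi_\CC(\s) = 1$ for every $\s \in \GQ$. Evaluating at $\s = \id$: since $\id\inv = \id$ and $\id$ fixes everything, $\Xi_\CC(\id) = \ssp{\CC}(\id) \cdot \xi_1(\CC) \cdot \frac{\g}{\g} = \xi_1(\CC)$, because $\ssp{\CC}(\id) = \sgn(\sqdim{\CC}) = 1$ as $\sqdim{\CC} > 0$. Hence $\xi_1(\CC) = 1$. Now that $\xi_1(\CC) = 1$, we may take $\g = 1$, and the formula collapses to $\Xi_\CC(\s) = \ssp{\CC}(\s\inv)$. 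Thus $\Xi_\CC \equiv 1$ forces $\ssp{\CC}(\s\inv) = 1$ for all $\s$, i.e. $\ssp{\CC}$ is the constant function $1$; and $\ss{\CC} = \ssp{\CC}$ by pseudounitarity. This gives the stated description for a single pseudounitary modular category.

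The remaining subtlety — and the step I expect to require the most care — is passing from representatives to Witt classes: the conditions ``$\xi_1 = 1$'' and ``$\ss{} \equiv 1$'' must be checked to be well-defined on Witt classes and compatible with the group operation, so that $\ker(\Xi) \subseteq \WW_\un$ is exactly the set of classes admitting (equivalently, all of whose pseudounitary modular representatives have) $\xi_1 = 1$ and trivial signature. Both facts are already available: $\xi_1$ is a Witt invariant of pseudounitary modular categories by \cite{DMNO} (and is multiplicative under $\boxtimes$ by \cite[Lemma 3.1]{NSW}), and $\ss{} = \esgnp$ restricted to pseudounitary classes is a Witt invariant and a homomorphism by Theorem \ref{thm:cat-dim-sgn}. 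So if $[\CC] \in \ker(\Xi)$ is written as a product of classes of pseudounitary modular categories, applying $\Xi$ and the single-category analysis to each factor (or directly to a chosen representative, using that $\WW_\un$ is generated by such classes) yields that the product class has $\xi_1 = 1$ and $\ss{} \equiv 1$; conversely any such class lies in $\ker(\Xi)$ by the first paragraph. I would phrase the argument so that it applies verbatim to an arbitrary element of $\WW_\un$ by invoking the homomorphism properties of $\Xi$, $\esgnp$, and $\xi_1$ rather than reducing to the single-category case twice. \qed
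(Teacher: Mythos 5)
Your proof is correct and takes essentially the same route as the paper: evaluate $\Xi_\CC$ at the identity to force $\xi_1(\CC)=1$, choose $\gamma=1$, and then read off the vanishing of the signature from the collapsed formula in Theorem~\ref{thm:higher-cc2}. The only difference is the final paragraph about well-definedness, which is more caution than the argument actually needs---since $\WW_\un$ is closed under $\boxtimes$ and reversal within pseudounitary modular categories, every class already has a pseudounitary modular representative, and the Witt-invariance of $\xi_1$ and $\ss{}$ was established in the preceding results, so the paper simply applies the single-representative computation directly.
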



\section{The modular tensor categories $\so(2r+1)_{2r+1}$}
In this section, we provide some basic facts for the quantum group modular categories $$\CC_r := \so(2r+1)_{2r+1}$$ 
for $r \geq 1$. The readers are referred to \cite{BakalovKirillov, rowell2006} for more details on these categories. In particular, by \cite{Wenzl-Cstar}, $\CC_r$ is pseudounitary, so $\ss{\CC_{r}} = \ss{\CC_{r}}'$ for $r \in \BN$ by Remark \ref{rmk:pseudo}. We prove a formula for the higher central charges of $\CC_r$ in Lemma \ref{lem:ord-T} and a formula for $D_r = \sqrt{\dim(\CC_r)}$ in Proposition \ref{prop:Dr} which are essential to the proof of our major result.

\subsection{Notations and formulas}
Some basic facts of the categories $\CC_r$ can be extracted from the underlying Lie algebras $\so(2r+1)$ and their root/weight datum. The conventions and notations of roots, weights and information of $\CC_r$ are adopted from \cite{BakalovKirillov, BourbakiLie4, hump, rowell2006}. We list below some of the datum we will use in the next few sections. 

Let $n = 2r+1$ for some $r \geq 1$. The Lie algebra  $\so(n) = \so(2r+1)$ is of type $B_r$. We consider the quantum group modular category $\CC_r$ of $\so(n)$ at level $n$, and use the following notations for $r \ge 2$. 

\begin{itemize}
\item
    Orthonormal basis for the inner product space $(\BR^r, (\cdot\mid\cdot))$: $\{e_1, \ldots, e_r\}$.
\item Normalized inner product such that any short root $\a$ has squared length $2$:   $$\iprod{e_j,e_k} = 2\delta_{j,k} =2(e_j\mid e_k).$$

\item
    The set of positive roots: $\D_+$. Its contains the following elements
    $$
    \begin{aligned}
    e_j,& \ j = 1, \ldots, r;\\
    e_j-e_k,& \ 1 \leq j < k \leq r;\\
    e_j+e_k,& \ 1 \leq j < k \leq r\,.
    \end{aligned}
    $$
    In particular, $|\D_+| = r^2$.
    \item
    Fundamental weights:
    $$
    \begin{aligned}
    \w_j &= e_1 + \cdots + e_j,\ j = 1, \ldots, r - 1;\\
    \w_r &= \frac{1}{2}(e_1 + \cdots + e_r)\,.
    \end{aligned}
    $$
\item
    The set of dominant weights: $\F_+$.
\item
    Root lattice: $Q$. 
\item Coroot lattice: $$Q^\vee = \{\frac{2\a}{(\a\mid\a)}\mid \a\in Q \}\,.$$
    
\item
    Weight lattice: $P$. Note that the index of $Q^\vee$ in $P$ is given by
    \begin{equation}\label{eq:P-Q-v}
        |P/Q^{\vee}| = |P/Q|\cdot |Q/Q^{\vee}| = 4\,.
    \end{equation}
    
\item
    Half sum of positive roots:
    $$ 
    \p = \frac{1}{2}\Big((2r-1)e_1 + (2r-3)e_2 + \cdots + 3e_{r-1} + e_r\Big).
    $$
    
\item
    Highest root:
    $$
    \hroot = e_1 + e_2.
    $$
    
\item
    Dual Coxeter number:
\begin{equation}\label{eq:n-dcox}
    \dcox = 2r - 1 = n - 2\,.
\end{equation}
        
\item
    The fundamental alcove:
    \begin{equation}\label{eq:FA}
    \begin{aligned}
    C_r 
    &= \{\l\in \F_+\mid (\l+\p\mid \hroot) < n+\dcox \}\\
    &= \{\l\in \F_+\mid (\l+\p\mid \hroot) < 4r \}\\
    &= \{\l\in \F_+\mid (\l\mid \hroot) \le n \}.
    \end{aligned}
    \end{equation}
    Note that the isomorphism classes of simple objects of $\CC_r$ are indexed by $C_r$, and so we identify $C_r$ and $\Irr(\CC_r)$.
\item
    Quantum parameter:
    $$ 
    q = \exp{\left(\frac{\pi i}{2(n+\dcox)}\right)}
      = \exp{\left(\frac{\pi i}{4n-4}\right)}
      = \exp{\left(\frac{\pi i}{8r}\right)}\,.
    $$
    
\item
    Quantum integer:
    $$[m] = \frac{q^m-q^{-m}}{q - q^{-1}}\,.$$
    
\item
    Twist:
    $$
    \theta_\l = q^{2(\l\mid \l+2\p)} \quad\text{ for } \l \in C_{r}\,.
    $$
    
\item
    Quantum dimension:
    $$
    d_\l = \prod_{\a\in \D_{+}} \frac{[2(\l+\p\mid\a)]}{[2(\p\mid\a)]}\quad\text{ for } \l \in C_{r}\,.
    $$
    
\item
    First central charge:
    \begin{equation}\label{eq:xi-1}
    \begin{aligned}
    \xi_1(\CC_r) 
    &= \exp\left(\frac{2\pi i}{8}\cdot\frac{n\dim_{\BC}(\so(n))}{n+\dcox}\right)\\
    &= \exp\left(\frac{2\pi i}{8}\cdot\frac{(2r+1)\cdot(r(2r+1))}{4r}\right)\\
    &=
    \exp\left(2\pi i\frac{(2r+1)^2}{32}\right)\,.
    \end{aligned}
    \end{equation}
\end{itemize}

For $r=1$, $\CC_r=\so(3)_3$. All the above notations are the same except that $\vartheta_0=e_1$.


\subsection{Higher central charge of $\CC_{r}$}
Let $D_r = \sqdim{\CC_r}$ be the positive square root of $\dim(\CC_r)$. Let $N_r$ be the Frobenius-Schur exponent of $\CC_r$, and $T_r$ the T-matrix of $\CC_r$. By \cite[Theorem 7.7]{NgSchaunburgSpherical}, $$N_r = \ord(T_r) = \lcm\{\ord(\theta_{\l})\mid \l \in C_r\}\,.$$

\begin{lem}\label{lem:ord-T}
Let $r$ be a positive integer. Then
\begin{itemize}
\item[(a)] $\lcm\{32,4r\}\mid N_r \,\mid 32r$. 

\item[(b)] $D_r \in \BQ_{N_r}$, and for any $\s \in \GQ$, we have \begin{equation}\label{eq:higher-cc}
\Xi([\CC_r])(\s)
=
\ss{\CC_{r}}(\s\inv)\cdot\frac{\s(\xi_1(\CC_r))^{11}}{\s\inv(\xi_1(\CC_r))^{10}}\,.
\end{equation}
\end{itemize}
\end{lem}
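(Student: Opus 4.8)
The plan is to establish (a) and (b) separately, with (a) being essentially an order-of-vanishing/root-of-unity computation and (b) following from (a) together with Theorem~\ref{thm:higher-cc2}.

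For part (a), I would start from the cited formula $N_r = \ord(T_r) = \lcm\{\ord(\theta_\l) \mid \l \in C_r\}$ and the explicit twist formula $\theta_\l = q^{2(\l\mid\l+2\p)}$ with $q = \exp(\pi i/(8r))$. Thus $\theta_\l = \exp\!\left(\frac{2\pi i}{32r}\cdot 2(\l\mid\l+2\p)\right)$, so each $\ord(\theta_\l)$ divides $32r$, giving $N_r \mid 32r$ immediately. For the lower bound $\lcm\{32,4r\}\mid N_r$, I would exhibit specific simple objects whose twists have the required orders: the trivial object together with a well-chosen fundamental weight (e.g.\ $\w_1 = e_1$ or a multiple thereof, and the spin weight $\w_r$) should produce twists of order divisible by $4r$ and by $32$ respectively. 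Concretely, one evaluates $2(\l\mid\l+2\p)$ for these weights using $(e_j\mid e_k) = \delta_{j,k}$ and the explicit $\p$; the key numerical fact is that the first central charge exponent $(2r+1)^2/32$ from \eqref{eq:xi-1} forces a denominator of $32$ (since $2r+1$ is odd, $(2r+1)^2$ is odd, so the fraction is in lowest terms), and this central charge is $\tau_1/|\tau_1|$, a product/sum over the $\theta_\l$, pinning down that $32 \mid N_r$. The factor $4r$ should come from a weight $\l$ for which $2(\l\mid\l+2\p)$ is coprime to $8$ modulo the relevant power of $2$ dividing $r$. I expect this to be the most delicate bookkeeping in the lemma: one must check that no unexpected cancellation reduces the order below $\lcm\{32,4r\}$, and this likely requires choosing the test weights carefully and possibly separating the cases $r$ odd / $r$ even, and treating $r=1$ separately as the excerpt already flags.

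For part (b), the claim $D_r \in \BQ_{N_r}$ follows from Theorem~\ref{thm:sqdim} (so $D_r$ is a cyclotomic integer) refined by the conductor bound discussed after that theorem, or more directly from \cite[Theorem 7.7]{NgSchaunburgSpherical}/the Galois argument of Lemma~\ref{lem:smod-gal} applied to the modular category $\CC_r$, which shows $D_r$ lies in the field generated by the entries of the $S$-matrix, which in turn is contained in $\BQ_{N_r}$ by the Galois symmetry of modular data. The formula \eqref{eq:higher-cc} is then obtained by specializing Theorem~\ref{thm:higher-cc2}: there, $\Xi_\CC(\s) = \ssp{\CC}(\s\inv)\cdot \s\inv(\xi_1(\CC))\cdot \frac{\s(\g)}{\s\inv(\g)}$ for any third root $\g$ of $\xi_1(\CC)$. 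Since $\CC_r$ is pseudounitary we have $\ssp{\CC_r} = \ss{\CC_r}$. The point is to choose $\g$ cleverly: because $\xi_1(\CC_r) = \exp(2\pi i (2r+1)^2/32)$ is a $32$nd root of unity and $\gcd(3,32)=1$, the element $\g := \xi_1(\CC_r)^{11}$ satisfies $\g^3 = \xi_1(\CC_r)^{33} = \xi_1(\CC_r)$, so it is a legitimate third root. Substituting $\g = \xi_1^{11}$ gives $\s\inv(\xi_1)\cdot \frac{\s(\xi_1^{11})}{\s\inv(\xi_1^{11})} = \frac{\s(\xi_1)^{11}}{\s\inv(\xi_1)^{11}}\cdot \s\inv(\xi_1) = \frac{\s(\xi_1)^{11}}{\s\inv(\xi_1)^{10}}$, which is exactly the right-hand side of \eqref{eq:higher-cc}. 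The only thing to verify is that $\s(\g)$ and $\s\inv(\g)$ are computed correctly as Galois images of the root of unity $\xi_1^{11}$, which is routine once $\g$ is taken to be that explicit power rather than an abstract cube root.

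The main obstacle I anticipate is the sharp lower bound $\lcm\{32,4r\}\mid N_r$ in part (a): upper bounds on orders of roots of unity are automatic from the formulas, but proving that the order is \emph{at least} a given value requires producing explicit simple objects (weights in $C_r$) realizing those orders and ruling out cancellation, which is a genuine, if elementary, computation with the $B_r$ root data and may need case analysis on the $2$-adic valuation of $r$.
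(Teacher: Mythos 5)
Your plan for part (a) and the algebraic manipulation in part (b) match the paper's proof. The upper bound $N_r\mid 32r$ is the same observation; for the lower bound the paper uses $\w_r$ (twist $\exp((2r+1)\pi i/16)$, order $32$) and $2e_1$ (twist $-\exp(\pi i/(2r))$, order $4r$, for $r\ge 3$; for $r\le 2$, $\lcm\{32,4r\}=32$ so $\w_r$ already suffices), and your ``$\w_1=e_1$ or a multiple thereof'' and $\w_r$ are exactly the right candidates. One small correction: you need not worry about ``cancellation''---since $N_r=\lcm\{\ord(\theta_\l)\mid \l\in C_r\}$, exhibiting one $\l$ with $\ord(\theta_\l)=32$ and one with $\ord(\theta_\l)=4r$ immediately gives $\lcm\{32,4r\}\mid N_r$; there is nothing to rule out. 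Your substitution $\g=\xi_1^{11}$ into Theorem~\ref{thm:higher-cc2} is precisely the paper's computation for \eqref{eq:higher-cc}.

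The genuine gap is your justification of $D_r\in\BQ_{N_r}$. Neither of your proposed routes actually gives this: the remark after Theorem~\ref{thm:sqdim} only bounds the conductor of $\sqrt{\dim\CC}$ by $12\cdot\FSexp(\CC)$ (so $D_r\in\BQ_{12N_r}$), and the Galois argument of Lemma~\ref{lem:smod-gal} only shows $D_r$ lies in \emph{some} cyclotomic field, not specifically $\BQ_{N_r}$; ``$D_r$ lies in the field generated by the $S$-matrix entries'' is not something those arguments establish. The paper instead derives it from \cite[Theorem~II~(ii)]{dong2015}, whose hypothesis is precisely that a third root of $\xi_1(\CC_r)$ lies in $\BQ_{N_r}$; that hypothesis in turn is verified using part (a): since $32\mid N_r$ and $\g_r=\xi_1(\CC_r)^{11}$ is a $32$nd root of unity, $\g_r\in\BQ_{N_r}$. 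This logical dependence of the $D_r\in\BQ_{N_r}$ claim on part (a) (via the $32\mid N_r$ divisibility) is the step your proposal misses.
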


\begin{proof}
Since $
2(\lambda \mid\lambda+2\p) \in \frac{1}{2}\BZ$, 
$\theta_\lambda= q^{2(\lambda \mid\lambda+2\p)}$ is a $32r^{th}$-root of unity. Therefore, $T_r^{32 r}=\id$ or $N_r \,\mid 32 r$. 

By (\ref{eq:FA}), we have $\w_r = \frac{1}{2}(e_1 + \cdots +e_r) \in C_r$. Therefore,
$$
2(\w_r\mid\w_r+2\p) = 
    \frac{r(2r+1)}{2}\,.
$$
Therefore,
$$
\theta_{\w_{r}} = q^{2(\w_{r}\mid \w_{r} + 2\p)} 
= 
\exp\left(\frac{\pi i}{8r} \cdot \frac{r(2r+1)}{2}\right) 
=
\exp\left(\frac{(2r+1)\pi i}{16}\right)\,,
$$
which implies that $\ord(\theta_{\w_{r}}) = 32$. Thus, it suffices to consider $r \ge 3$ for statement (a). 

Note that by (\ref{eq:FA}), we have $2 e_1 \in C_r$ for $r \ge 3$. We have
$$
2(2 e_1\mid 2 e_1+2\p) = 8 + 4(2r-1) = 8r+4\,.
$$
Therefore,
$$
\theta_{2e_{1}} 
= 
\exp\left(\frac{\pi i}{8r} \cdot (8r+4)\right)
=
-\exp\left(\frac{\pi i}{2r} \right)
$$
which implies that $\ord(\theta_{2e_{1}}) = 4r$. The above computations imply the first divisibility of statement (a). 

By (\ref{eq:xi-1}), $\g_r = \xi_1(\CC_r)^{11}$ is a $3^{rd}$ root of $\xi_1(\CC_r)$, and $\g_r^{32}=1$. By (a), $\g_r \in \BQ_{N_r}$.  This implies that $D_r \in \BQ_{N_{r}}$ by \cite[Theorem II (ii)]{dong2015}. The remaining statement of (b) follows immediately from Theorem \ref{thm:higher-cc2}.
\end{proof}

\begin{remark}
The preceding lemma implies that  $\ord(T_r)$ is non-decreasing with respect to $r$.
\end{remark}


\subsection{A formula for $D_r$}
\begin{prop}\label{prop:Dr}
The square root of the global dimension of $\CC_r$ is given by 
\begin{equation}\label{eq:Dr}
D_{r} = \frac{\sqrt{r^r}}{2^{r^{2}-r-1}} 
\left(
\prod_{\ell = 1}^{r} \sin\left(\frac{(2\ell-1)\pi}{8r}\right)
\prod_{j = 1}^{2r-2}
\sin\left(\frac{j\pi}{4r}\right)^{m_{r}(j)}
\right)^{-1}\,
\end{equation}
where $m_r(j)$,    $1 \leq j \leq 2r-2$, is given by
$$
m_r(j) = 
\begin{cases}
0,\quad \mathrm{if}\  r = 1\,;\\
r - \ceil{\frac{j}{2}},\quad \mathrm{if}\ r \geq 2\,.
\end{cases}
$$
\end{prop}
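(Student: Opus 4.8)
The plan is to compute $\dim(\CC_r) = \sum_{\l \in C_r} d_\l^2$ directly, but via a cleaner route: use the standard Verlinde/Kac--Peterson formula expressing $\dim(\CC_r)$ as a product over positive roots of the underlying Lie algebra $\so(2r+1)$. Recall that for a quantum group modular category $\CC$ attached to $\mathfrak g$ at level $k$, one has $\dim(\CC) = \left(\sum_{\l} d_\l^2\right) = 1/S_{\1,\1}^2$ where $S_{\1,\1}$ is the Perron--Frobenius eigenvector entry at the unit, and $S_{\1,\1}$ admits the closed form
$$
S_{\1,\1} = \frac{1}{|P/Q^\vee|^{1/2}\,(n+\dcox)^{r/2}} \prod_{\a \in \D_+} 2\sin\!\left(\frac{\pi(\p\mid\a)}{n+\dcox}\right).
$$
So the first step is to substitute the $B_r$ data listed in Section 5: $n = 2r+1$, $\dcox = 2r-1$, $n+\dcox = 4r$, $|P/Q^\vee| = 4$, $|\D_+| = r^2$, and the explicit list of positive roots ($e_j$; $e_j - e_k$; $e_j+e_k$). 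This reduces the problem to evaluating $\prod_{\a\in\D_+} \sin\!\left(\frac{\pi(\p\mid\a)}{4r}\right)$ with the given $\p = \tfrac12((2r-1)e_1 + (2r-3)e_2 + \cdots + e_r)$.

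The second step is the combinatorial heart: organize the product over the three families of roots and read off the multiplicities. For the short roots $\a = e_j$ we get $(\p\mid e_j) = \tfrac12(2r-2j+1)$, producing the factors $\sin\!\left(\frac{(2r-2j+1)\pi}{8r}\right)$ for $j = 1,\dots,r$; reindexing $\ell = r - j + 1$ turns this into $\prod_{\ell=1}^r \sin\!\left(\frac{(2\ell-1)\pi}{8r}\right)$, matching the first product in \eqref{eq:Dr}. For the long roots $\a = e_j \pm e_k$ ($j<k$) we compute $(\p\mid e_j - e_k) = k-j$ and $(\p\mid e_j+e_k) = 2r - j - k + 1$, and then I must count, for each target value $j_0$ with $1 \le j_0 \le 2r-2$, how many pairs $(j,k)$ with $1\le j<k\le r$ give $\sin\!\left(\frac{j_0\pi}{4r}\right)$ — keeping in mind the reflection $\sin\!\left(\frac{(4r - m)\pi}{4r}\right) = \sin\!\left(\frac{m\pi}{4r}\right)$, which folds the value $2r-j-k+1$ back into the range when it exceeds $2r$ (here it never does since $j+k\ge 3$). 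A direct count of $\#\{k-j = j_0\} = r - j_0$ from the difference family and $\#\{2r-j-k+1 = j_0\}$ from the sum family, summed, should yield exactly $m_r(j_0) = r - \lceil j_0/2\rceil$; this is the identity to verify carefully, with attention to parity (even vs.\ odd $j_0$) and the boundary cases $j_0 = 2r-2, 2r-1$ and the small-$r$ degeneration $r=1$ (where $\D_+ = \{e_1\}$ only, so both products over long roots are empty, consistent with $m_1(j) = 0$).

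The third step is bookkeeping of the constant and the square root. Taking the reciprocal of $S_{\1,\1}$ and then the positive square root gives $D_r = \sqrt{\dim(\CC_r)} = 1/|S_{\1,\1}|$ (using pseudounitarity from \cite{Wenzl-Cstar}, so that $\dim(\CC_r) = \FPdim(\CC_r) = 1/S_{\1,\1}^2$ with $S_{\1,\1} > 0$). This contributes $|P/Q^\vee|^{1/2} = 2$ in the numerator, $(4r)^{r/2} = 2^r r^{r/2}$ also in the numerator, and the $2^{|\D_+|} = 2^{r^2}$ from the $2\sin$ factors in the denominator, i.e.\ an overall power of $2$ equal to $1 + r - r^2 = -(r^2 - r - 1)$, together with the $\sqrt{r^r}$, reproducing the prefactor $\sqrt{r^r}/2^{r^2-r-1}$ in \eqref{eq:Dr}.

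\textbf{Main obstacle.} The genuinely nontrivial part is the multiplicity count in step two: proving that the combined contribution of the difference roots and the sum roots to each sine value $\sin(j_0\pi/4r)$ equals $r - \lceil j_0/2 \rceil$. The difference roots contribute $r - j_0$ copies of $\sin(j_0\pi/4r)$, so one must show the sum roots contribute exactly $\lceil j_0/2\rceil - j_0 + (r - \lceil j_0/2\rceil) - (r-j_0) = \ldots$ — more precisely, the sum roots $e_j+e_k$ with $2r-j-k+1 \equiv \pm j_0 \pmod{4r}$ must fill in the deficit, and this requires a case split on the parity of $j_0$ together with the symmetry $\sin(x) = \sin(\pi - x)$ identifying the value $2r - j_0$ with $j_0$ inside $\{1,\dots,2r-2\}$. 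I would handle this by writing the full multiset $\{(\p\mid\a) : \a \in \D_+\} = \{\,\tfrac{2\ell-1}{2} : 1\le \ell\le r\,\} \cup \{\,k-j : 1\le j<k\le r\,\} \cup \{\,2r+1-j-k : 1\le j<k\le r\,\}$ and checking the claimed multiplicity formula value-by-value, being careful that half-integer values (from short roots) never collide with integer values (from long roots) after the $\sin$ is applied — they don't, since $\sin((2\ell-1)\pi/8r)$ has denominator $8r$ in lowest terms generically while $\sin(j_0\pi/4r)$ has denominator dividing $4r$. The rest of the argument is a routine, if slightly lengthy, substitution and simplification.
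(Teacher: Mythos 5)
Your route is essentially the paper's: both start from the Bakalov--Kirillov / Kac--Peterson product formula (the paper cites \cite[Theorem~3.3.20]{BakalovKirillov} for $D_r$ directly; you write $D_r = 1/S_{\1,\1}$, which is the same thing), substitute the $B_r$ root data, separate the short roots (half-integer values of $(\a\mid\p)$, giving the first product) from the long roots (integer values, giving the second), and reduce everything to the multiplicity claim $m_r(j) = r - \ceil{j/2}$. Your prefactor bookkeeping in step three is correct. However, the one step you flag as the \emph{main obstacle} --- verifying that the combined counts from the difference roots $e_a-e_b$ and the sum roots $e_a+e_b$ give $r-\ceil{j/2}$ --- is exactly the step you do not carry out, and your attempt to set it up goes astray: the expression $\lceil j_0/2\rceil - j_0 + (r - \lceil j_0/2\rceil) - (r-j_0)$ telescopes to $0$, which is not the deficit you want. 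The correct target is that the sum roots contribute $\lfloor j_0/2 \rfloor$ copies when $j_0 \le r$ (so the total with the $r-j_0$ from differences is $r - \lceil j_0/2\rceil$), and $r - \lceil j_0/2\rceil$ copies when $j_0 > r$ (since then the differences contribute nothing); this requires a parity-and-range case split that your sketch gestures at but does not do. The paper avoids the case split entirely by packaging both families into the single set $M_r(j) = \{(a,b): 1\le a<b\le r,\ (b-a-j)(2r-(b+a)+1-j)=0\}$, observing that the two defining equations are never simultaneously solvable, and then proving $|M_{r+1}(j)| = |M_r(j)| + 1$ by induction on $r$ --- arguably cleaner than a direct value-by-value count and worth adopting if you want to close the gap.
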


\begin{proof}
According to \cite[Theorem 3.3.20]{BakalovKirillov}, 
\begin{equation}\label{eq:Dr-BK}
D_{r} 
= 
\sqrt{|P/(n+\dcox)Q^{\vee}|}
\prod_{\a \in \D_{+}} 
\left(
2\sin\left(\frac{(\a\mid\p)}{n+\dcox}\cdot\pi\right)
\right)^{-1}\,.
\end{equation}
By (\ref{eq:P-Q-v}), (\ref{eq:n-dcox}) and the fact that $|\D_+| = r^2$, we have
\begin{equation}\label{eq:Dr-simple}
\begin{aligned}
D_{r} 
&=
\frac{\sqrt{4(4r)^r}}{2^{r^{2}}}
\prod_{\a \in \D_{+}} 
\left(
\sin\left(\frac{(\a\mid\p)}{4r}\pi\right)
\right)^{-1}\\
&=
\frac{\sqrt{r^r}}{2^{r^{2}-r-1}}
\prod_{\a \in \D_{+}} 
\left(
\sin\left(\frac{(\a\mid \p)}{4r}\pi\right)
\right)^{-1}\\
\end{aligned}
\end{equation}

Recall that $\D_+ = \{e_\ell, e_a\pm e_{b}\mid 1 \leq \ell \leq r, 1 \leq a<b \leq r\}$, then $(\a\mid \p)$ can be easily given as follows: 

When $\a = e_\ell$ for some $1 \leq \ell\leq r$, $(\a\mid \p)= (r - \ell) + 1/2$ which is a half integer;
when $\a = e_a + e_{b}$ for some $1 \leq a< b \leq r$, $(\a\mid \p)= 2r - (a + b) + 1$ which is an  integer satisfying
$$
2 \leq (\a\mid \p)  \leq 2r-2\,;
$$
when $\a = e_a - e_{b}$ for some $1 \leq a < b \leq r$, $(\a\mid \p)= b - a$ which is also an integer satisfying
$$
1 \leq (\a\mid \p)  \leq r - 1\,.
$$

Let $m_r(j)=\#\{\a \in \D_+ \mid (\a \mid \rho) =j\}$ for $1 \le j \le 2r-2$.  We can now rewrite (\ref{eq:Dr-simple}) as
\begin{equation*}
D_{r} 
=
\frac{\sqrt{r^r}}{2^{r^{2}-r-1}} 
\left(
\prod_{\ell = 1}^{r} \sin\left(\frac{(2\ell-1)\pi}{8r}\right)
\prod_{j = 1}^{2r-2}
\sin\left(\frac{j\pi}{4r}\right)^{m_r(j)}
\right)^{-1}
\end{equation*}

When $r = 1$, $(\a\mid \rho)$ is not an integer for $\a \in \D_+$ and hence the lemma follows directly from \eqref{eq:Dr-simple}. We proceed to show that $m_r(j) = r - \ceil{\frac{j}{2}}$ for $r \geq 2$ and $1 \le j \le 2r-2$ by induction on $r$. 

Note that 
the equations $j = b-a$ and $j = 2r - (b+a)+1$ have no common integer solution $(a,b)$ for any integer $j$. Thus
$$
    m_r(j)  = |M_r(j)|
$$
for $1\le j \le 2r-2$, where 
$$
M_r(j):= \{(a,b)\mid 1 \le a < b \le r \text{ such that }  (b-a -j)(2r - (b+a)+1 - j) =0 \}\,.
$$
One can check directly that $m_2(j)=2-\ceil{\frac{j}{2}}$ for $1\le j \le 2$. Assume that $m_r(j) = r - \ceil{\frac{j}{2}}$ for all $1\le j \le 2r-2$ for some integer $r$.  Note that if $1\le j \le 2r-2$, then 
$$
M_r(j)+(1,1)= \{(a,b) \in M_{r+1}(j) \mid a \ge 2 \}\,.
$$
Thus,
$$
M_{r+1}(j) = \left\{
\begin{array}{ll}
    (M_r(j)+(1,1)) \cup \{(1, j+1)\} & \text{ if } 1 \le j \le r,   \\ \\
    (M_r(j)+(1,1)) \cup \{(1, 2r+2-j)\} & \text{ if } r+1 \le j \le 2r-2\,. 
\end{array}\right.
$$
Therefore, $m_{r+1}(j) = m_r(j)+1$ for $1\le j \le 2r-2$.
It is easy to see that
$$
M_{r+1}(2r-1) =\{(1,3)\}\,,   M_{r+1}(2r) = \{(1,2)\} \text{ and } \ceil{\frac{2r-1}{2}} =\ceil{\frac{2r}{2}} = r\,.
$$
Thus,
$$
m_{r+1}(j)=|M_{r+1}(j)| = 1 = r +1 - \ceil{\frac{j}{2}}
$$
for $2r-1 \le j \le 2r$. Therefore, we have $m_{r+1}(j)= r+1 -\ceil{\frac{j}{2}}$ for $1 \le j \le 2r$. 
\end{proof}


\section{Witt subgroups generated by $\CC_r$}\label{sec:the-group}
Let $\II := \su(2)_2$ be a fixed Ising modular category (cf. \cite{DGNO}). It is well-known (cf. \cite{DMNO}) that for any $n = 2r+1$, the conformal embedding $\so(n)_n \times \so(n)_n \subset \so(n^2)_1$ implies $\CC_r \boxtimes \CC_r$ is Witt equivalent to an Ising modular category. By comparing the first central charges, the Witt class $[\CC_r]$ of $\CC_r$ satisfies
\begin{equation}\label{eq:xi-of-Ising}
[\CC_r]^2 = [\CC_r\bp{2}] 
=
\begin{cases}
[\II]^{11} \quad \mathrm{if}\  r \equiv 0 \ \mathrm{or} \ 3 \pmod 4, \\
[\II]^3, \quad \mathrm{if}\  r \equiv 1 \ \mathrm{or} \ 2 \pmod 4 .
\end{cases}
\end{equation}
 Since the first central charge of $\CC_r$ is a primitive $32^{nd}$ root of unity (cf.  \eqref{eq:xi-1}), the cyclic subgroup  $\langle [\CC_r] \rangle$ of $\WW$ generated by $[\CC_r]$ is of order 32 for any positive integer $r$.

In this section, we study the subgroup of the Witt group $\WW$ generated by the Witt classes $[\CC_r]$ for $r \geq 1$ using their higher central charges. We proceed by investigating some  number-theoretical properties of them. 


\subsection{The signature of $\CC_r$} In this subsection, we compute some values of the signatures of an infinite subset of $\{\CC_r \mid r \in \BN\}$.
\begin{lem}\label{lem:gal-sin} 
For any integer $r, j, k$ with $k \equiv 1 \pmod 4$ and $\gcd(k, r) = 1$, 
\begin{equation}
   \s_k\left(\sin\left( \frac{j\pi}{8r} \right)\right)= \sin\left( \frac{k j\pi}{8r}  \right) \quad\text{ in } \BQ_{16r}\,.
\end{equation}
\end{lem}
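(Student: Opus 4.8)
\textbf{Proof plan for Lemma \ref{lem:gal-sin}.}

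The plan is to express $\sin(j\pi/8r)$ in terms of a root of unity living in $\BQ_{16r}$, and then track how $\s_k$ acts on that root of unity. First I would write
$$
\sin\left(\frac{j\pi}{8r}\right) = \frac{\zeta_{16r}^{j} - \zeta_{16r}^{-j}}{2i}\,,
$$
so that up to the factor $\frac{1}{2i}$ the quantity $\sin(j\pi/8r)$ is a $\BZ$-linear combination of powers of $\zeta_{16r}$; note $i = \zeta_{16r}^{4r} \in \BQ_{16r}$, so indeed $\sin(j\pi/8r) \in \BQ_{16r}$ as claimed. The hypothesis $\gcd(k,r)=1$ together with $k \equiv 1 \pmod 4$ gives $\gcd(k, 16r) = 1$, so there is a well-defined automorphism $\s_k \in \Gal(\BQ_{16r}/\BQ)$ with $\s_k(\zeta_{16r}) = \zeta_{16r}^{k}$; this is the $\s_k$ referred to in the statement (after restriction, it agrees with the one in $\Gal(\BQ_{16r}/\BQ)$ used elsewhere).

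The key computation is then simply
$$
\s_k\left(\sin\left(\frac{j\pi}{8r}\right)\right)
= \frac{\zeta_{16r}^{kj} - \zeta_{16r}^{-kj}}{2\,\s_k(i)}\,.
$$
The one subtlety is the denominator: $\s_k(i) = \zeta_{16r}^{4rk} = i^{k}$, and since $k \equiv 1 \pmod 4$ we have $i^{k} = i$, so the factor $\frac{1}{2i}$ is fixed by $\s_k$. This is exactly where the congruence condition $k \equiv 1 \pmod 4$ is used — without it one would pick up a sign, giving $\pm\sin(kj\pi/8r)$ instead. Hence
$$
\s_k\left(\sin\left(\frac{j\pi}{8r}\right)\right)
= \frac{\zeta_{16r}^{kj} - \zeta_{16r}^{-kj}}{2i}
= \sin\left(\frac{kj\pi}{8r}\right)\,,
$$
which is the assertion of the lemma.

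I do not expect any real obstacle here; the statement is essentially the observation that Galois action on a cyclotomic field sends $\zeta \mapsto \zeta^{k}$, combined with the bookkeeping that the normalizing factor $1/(2i)$ survives because $k \equiv 1 \pmod 4$. The only thing to be careful about is to verify $\gcd(k, 16r) = 1$ so that $\s_k$ is genuinely defined on $\BQ_{16r}$ (using $k$ odd, which follows from $k \equiv 1 \pmod 4$, and $\gcd(k,r) = 1$), and to state clearly that we identify the $\s_k$ of the lemma with its restriction to $\BQ_{16r} \supseteq \BQ_{8r}$.
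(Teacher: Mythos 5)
Your proposal is correct and follows essentially the same route as the paper's proof: write $\sin(j\pi/8r)$ as $\bigl(\zeta_{16r}^{j} - \zeta_{16r}^{-j}\bigr)/(2i)$, apply $\s_k$ to the numerator, and observe that $i$ is fixed because $k \equiv 1 \pmod 4$. You spell out the bookkeeping ($\gcd(k,16r)=1$, $\s_k(i)=i^k=i$) a bit more explicitly than the paper, which simply displays the one-line computation, but the argument is identical in substance.
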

\begin{proof}
$$
\s_k
\left(
\sin\left(\frac{j\pi}{8r}\right)
\right)
=
\s_k\left(
\frac{e^{\frac{j\pi i}{8r}} - e^{\frac{-j\pi i}{8r}}}
{2i}
\right)
=
\left(
\frac{e^{\frac{k j\pi i}{8r}} - e^{\frac{-k j\pi i}{8r}}}
{2i}
\right)
=
\sin\left(
\frac{k j\pi}{8r}
\right)\,.\qedhere
$$
\end{proof}

\begin{lem} \label{l:jac}
  Let $r$ be an odd positive integer. For any integer $k$ coprime to $r$, 
  $$
  \s_k(\sqrt{r^*}) =\jacobi{k}{r}\sqrt{r^*},
  $$
  where $r^* = \jacobi{-1}{r} r$, and $\jacobi{\bullet}{r}$ is the Jacobi symbol. If, in addition, $k \equiv 1 \pmod{4}$, then,  in $\BQ_{4r}$,  we have
  $$
  \s_k(\sqrt{r}) =\jacobi{k}{r}\sqrt{r}\,.
  $$
\end{lem}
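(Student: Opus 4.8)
\textbf{Proof plan for Lemma \ref{l:jac}.}

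The plan is to recall the classical evaluation of the quadratic Gauss sum and then read off the action of $\s_k$ on $\sqrt{r^*}$ from it. The starting point is Gauss's theorem that for an odd positive integer $r$ the quadratic Gauss sum $g(r):=\sum_{j=0}^{r-1}\zeta_r^{j^2}$ equals $\sqrt{r}$ when $r\equiv 1\pmod 4$ and $i\sqrt{r}$ when $r\equiv 3\pmod 4$; in either case $g(r)^2=r^*=\jacobi{-1}{r}r$, so $g(r)$ is a square root of $r^*$ lying in $\BQ_r\subseteq\BQ_{4r}$. This is precisely the evaluation referenced in the introduction via \cite{GaussDA, Serre}, so I would invoke it as known. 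In particular $\sqrt{r^*}\in\BQ_r$, and it suffices to compute $\s_k(g(r))$ for $k$ coprime to $r$ (the automorphism $\s_k$ of $\BQ_r$ being well-defined, as $\BQ_r\subset\BQ_N$ for any $N$ divisible by $r$, and in particular inside $\BQ_{4r}$).

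First I would compute, for $\gcd(k,r)=1$,
\[
\s_k(g(r)) \;=\; \sum_{j=0}^{r-1}\zeta_r^{kj^2}\;=\;\sum_{j=0}^{r-1}\jacobi{j}{r}\zeta_r^{kj}\;=\;\jacobi{k}{r}\sum_{j=0}^{r-1}\jacobi{kj}{r}\zeta_r^{kj}\;=\;\jacobi{k}{r}\,g(r),
\]
where the second equality is the standard identity $\sum_{j}\zeta_r^{kj^2}=\sum_j\jacobi{j}{r}\zeta_r^{kj}$ (each nonzero square class being hit twice, the zero class contributing $1$ on both sides when one uses $\sum_j\jacobi{j}{r}=0$), the third uses that $\jacobi{j}{r}=\jacobi{k}{r}\jacobi{kj}{r}$ since $\jacobi{\bullet}{r}$ is multiplicative and $\jacobi{k}{r}^2=1$, and the last is the substitution $j\mapsto k^{-1}j$ permuting $\BZ/r$. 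Since $g(r)$ is one of the two square roots of $r^*$ in $\BC$ and $\sqrt{r^*}$ denotes the fixed one chosen in the statement, $\s_k(g(r))=\jacobi{k}{r}g(r)$ forces $\s_k(\sqrt{r^*})=\jacobi{k}{r}\sqrt{r^*}$ regardless of which of the two roots $g(r)$ is. This proves the first assertion.

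For the second assertion, suppose additionally $k\equiv 1\pmod 4$. Write $\sqrt{r}=\sqrt{r^*}$ or $\sqrt{r}=\sqrt{-1}^{-1}\sqrt{r^*}=-i\sqrt{r^*}$ according as $r\equiv 1$ or $3\pmod 4$; in the latter case we must work in $\BQ_{4r}$ since $i\in\BQ_4\subset\BQ_{4r}$. In the case $r\equiv1\pmod4$ there is nothing further to do. In the case $r\equiv 3\pmod 4$, I would use $\s_k(i)=i^k=i$ (because $k\equiv1\pmod4$) together with $\jacobi{-1}{r}=-1$, so that $\s_k(\sqrt r)=\s_k(-i\sqrt{r^*})=-i\,\jacobi{k}{r}\sqrt{r^*}=\jacobi{k}{r}\sqrt r$. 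Combining the two cases gives $\s_k(\sqrt r)=\jacobi{k}{r}\sqrt r$ in $\BQ_{4r}$.

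The only genuinely non-routine ingredient is Gauss's sign determination of $g(r)$ (equivalently, the evaluation of $\sqrt{r^*}$ as an explicit element of $\BQ_r$), which I would cite rather than reprove; everything else is a short manipulation of characters and Jacobi symbols. I expect the main point requiring care is the bookkeeping of which square root is meant on each side — one must check that the conclusion $\s_k(\sqrt{r^*})=\jacobi{k}{r}\sqrt{r^*}$ is insensitive to the global sign choice, which it is because $\s_k$ is a ring homomorphism and hence commutes with negation.
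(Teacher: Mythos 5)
Your overall strategy (realizing $\sqrt{r^*}$ as a quadratic Gauss sum and acting by $\s_k$) is a genuine alternative to the paper's, which instead reduces to the prime case via $\sqrt{p^*}\in\BQ_p$ and the cyclicity of $\Gal(\BQ_p/\BQ)$, then multiplies Legendre symbols over the prime factorization. However, as written your proof has a gap: the intermediate identity
$$\sum_{j=0}^{r-1}\zeta_r^{kj^2}=\sum_{j=0}^{r-1}\jacobi{j}{r}\zeta_r^{kj}$$
is only valid when $r$ is prime. Your justification --- ``each nonzero square class being hit twice'' --- says that the number of solutions to $x^2\equiv j\pmod r$ is $1+\jacobi{j}{r}$, which fails as soon as $r$ is composite. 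For instance with $r=9$ the left side is $3+2(\zeta_9+\zeta_9^4+\zeta_9^7)=3$, but the right side vanishes (the class $j=0$ is hit three times, not once, and $\jacobi{j}{9}$ is an idempotent character, not a quadratic one). More generally, for $r$ squarefree with several prime factors the multiplicity is $\prod_i(1+\jacobi{j_i}{p_i})$, which is not $1+\jacobi{j}{r}$, and for non-squarefree $r$ the counting is different again. The lemma is stated --- and applied in Proposition \ref{prop:sign-D-2-8} --- for an arbitrary odd $r$, so the prime case is not enough.

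The conclusion $\s_k(g(r))=\jacobi{k}{r}g(r)$ is nonetheless true for all odd $r$, but establishing it requires either (a) the multiplicativity of Gauss sums over coprime factors (which reduces to prime powers, then to primes), or (b) bypassing Gauss sums altogether and factoring $\sqrt{r^*}=\prod_i\sqrt{p_i^*}$ and applying $\s_k$ prime by prime, which is what the paper does. The paper's route has the advantage of only invoking the elementary fact that $\sqrt{p^*}\in\BQ_p$ is moved by the generator of the cyclic Galois group, with no Gauss sum evaluation or sign determination needed; your route gives a concrete cyclotomic realization of $\sqrt{r^*}$ (which you rightly note is insensitive to the global sign choice since $\s_k$ is a ring homomorphism) but needs the multiplicativity step spelled out. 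Your handling of the second assertion ($k\equiv1\pmod 4$ fixes $i$, so the factor $i$ relating $\sqrt r$ to $\sqrt{r^*}$ is preserved) is correct and matches the paper.
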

\begin{proof}
For any prime factor $p$ of $r$, $\sqrt{p^*} \in \BQ_p \subseteq \BQ_r$ (cf. \cite{washington}). Note that $\Gal(\BQ_p/\BQ)$ is cyclic of order $p-1$. Thus, if  $a$ is a primitive root of $(\BZ/p)^\times$, then $\s_a$ is a generator of $\Gal(\BQ_p/\BQ)$ and $\s_a(\sqrt{p^*}) = -\sqrt{p^*}$. Therefore, $\s_a^j(\sqrt{p^*}) = (-1)^j \sqrt{p^*}$ for any integer $j$. Hence, we have $\s_k(\sqrt{p^*})=\jacobi{k}{p}\sqrt{p^*}$. 

If $r=p_1\cdots p_\ell$ is the prime factorization of $r$, then 
$r^* = p_1^* \cdots p_\ell^*$ and so
$$
\s_k(\sqrt{r^*}) = \s_k(\sqrt{p_1^*}) \cdots \s_k(\sqrt{p_\ell^*}) = \jacobi{k}{p_1} \cdots \jacobi{k}{p_\ell}\sqrt{r^*} = \jacobi{k}{r}\sqrt{r^*}\,.
$$
If, in addition, $k \equiv 1 \pmod 4$, then, in $\BQ_{4r}$, we have 
$$
\s_k(\sqrt{r}) = \s_k\left(\sqrt{r^*} \sqrt{\jacobi{-1}{r}}\right) = \jacobi{k}{r} \sqrt{r^*}\sqrt{\jacobi{-1}{r}} = \jacobi{k}{r} \sqrt{r}. \qedhere
$$
\end{proof}

\begin{prop}\label{prop:sign-D-2-8}
For any integers $l > 0$, $w > 0$ and $y$, let $a = 2 l+1 +w (8 l+2)$ and $k = 8ya + 4l+1$. Then, $\gcd(k, 4a)=1$ and $$\ss{\CC_{a}}(\s_{k})=(-1)^{y}.$$ 
\end{prop}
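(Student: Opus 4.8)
The plan is to compute $\ss{\CC_a}(\s_k) = \sgn(\s_k(D_a))$ by applying the Galois automorphism $\s_k$ to the explicit product formula for $D_a$ from Proposition~\ref{prop:Dr}, and tracking sign changes factor by factor. First I would check that $\gcd(k,4a)=1$: since $a$ is odd and $k \equiv 4l+1 \pmod{8a}$, we have $k$ odd, and modulo any prime $p \mid a$ one has $a \equiv 2l+1 \pmod p$ so $k \equiv 8x(2l+1) + 4l+1 \pmod p$; a short computation using $a = (2l+1)(1+4w) + w$... more carefully, $a = 2l+1 + w(8l+2) = (2l+1)(1+4w)$, so $p \mid 2l+1$ whenever $p \mid a$, whence $k \equiv 8x\cdot 0 + 4l+1 = 2(2l+1)+1 \equiv 1 \pmod p$ — wait, $4l+1 = 2(2l+1) - 1 \equiv -1 \pmod p$, so $k \equiv -1 \not\equiv 0$, giving $\gcd(k,a)=1$ and hence $\gcd(k,4a)=1$. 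Also note $k \equiv 1 \pmod 4$, which is what lets me use Lemmas~\ref{lem:gal-sin} and \ref{l:jac}.

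Next I would split $D_a$ into its three ingredients: the rational prefactor $\sqrt{a^a}/2^{a^2-a-1} = a^{(a-1)/2}\sqrt{a}/2^{a^2-a-1}$ (the $\sqrt{a}$ survives since $a$ is odd), the ``half-angle'' product $\prod_{\ell=1}^{a}\sin\!\big(\frac{(2\ell-1)\pi}{8a}\big)$, and the ``integer-angle'' product $\prod_{j=1}^{2a-2}\sin\!\big(\frac{j\pi}{4a}\big)^{m_a(j)}$. Applying $\s_k$: by Lemma~\ref{l:jac}, $\s_k(\sqrt a) = \jacobi{k}{a}\sqrt a$, contributing the sign $\jacobi{k}{a}$; the powers of $2$ are fixed. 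For the sine products, Lemma~\ref{lem:gal-sin} (with denominator $8a$, valid since $k\equiv 1\pmod 4$ and $\gcd(k,a)=1$) gives $\s_k(\sin(\theta)) = \sin(k\theta)$ for the relevant angles, and $\sgn(\sin(k\theta))$ depends on which half-period $k\theta$ falls into. So the remaining work is a careful counting problem: determine, for each $\ell$ (resp.\ each $j$ weighted by $m_a(j)$), the parity of $\lfloor k(2\ell-1)/(8a)\rfloor$ (resp.\ $\lfloor kj/(4a)\rfloor$), sum these parities, and combine with $\jacobi{k}{a}$.

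The main obstacle — and where the argument genuinely uses the special form $k = 8xa + 4l+1$ — is this sign count. I would reduce $k$ modulo the relevant modulus ($8a$ for the half-angle product, $4a$ for the integer-angle product) using $k \equiv 4l+1$, so that the ``$x$-part'' $8xa$ only shifts the angle by an integer multiple of $\pi$ and contributes a controlled global sign, while the ``$l$-part'' $4l+1$ interacts with the $\sin$ arguments. Concretely, $\sin(k(2\ell-1)\pi/(8a)) = \sin\big(x(2\ell-1)\pi + (4l+1)(2\ell-1)\pi/(8a)\big) = (-1)^{x(2\ell-1)}\sin\big((4l+1)(2\ell-1)\pi/(8a)\big)$, and since $2\ell-1$ is odd, $(-1)^{x(2\ell-1)} = (-1)^x$; multiplying over the $a$ values of $\ell$ yields $(-1)^{xa} = (-1)^x$ (as $a$ is odd). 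For the integer-angle product, $\sin(kj\pi/(4a)) = (-1)^{xj\cdot 2}\sin((4l+1)j\pi/(4a)) = \sin((4l+1)j\pi/(4a))$ since $8xaj/(4a) = 2xj$ is an even integer, so that product contributes \emph{no} net sign from $x$. It then remains to show that the $l$-dependent pieces — the product $\jacobi{4l+1}{a}\cdot\prod_\ell \sgn\sin\big(\tfrac{(4l+1)(2\ell-1)\pi}{8a}\big)\cdot\prod_j \sgn\sin\big(\tfrac{(4l+1)j\pi}{4a}\big)^{m_a(j)}$ equals $+1$; I expect this to follow from a reciprocity/Gauss-sum type identity (the paper hints the computation is ``inspired by the quadratic Gauss sum $\sum\zeta_k^{j^2}$''), possibly by recognizing $D_a$ up to a known root of unity as a Gauss sum whose Galois conjugates have controlled sign, or by induction on the prime factorization of $a$ via Lemma~\ref{l:jac} combined with the case of prime $a$. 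That reduction of the $l$-part to $+1$ is the technical heart and the step I would budget the most care for.
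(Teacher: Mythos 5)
Your decomposition is the right one, and you correctly identify both the crux of the argument and the point from which the factor $(-1)^x$ emerges: reducing $k$ modulo $8a$ (resp.\ $4a$) and using $k\equiv 1\pmod 4$ with Lemma~\ref{lem:gal-sin}, the shift by $8xa$ in each odd-angle factor contributes $(-1)^{x(2\ell-1)}=(-1)^x$, multiplying to $(-1)^{ax}=(-1)^x$ over the $a$ odd values $2\ell-1$; and the shift contributes nothing to the integer-angle factors because $8xaj/(4a)=2xj$ is even. This is exactly how the paper's proof isolates $(-1)^x$. The expectation that the remaining $l$-dependent product
$$\jacobi{k}{a}\cdot\prod_{\ell=1}^{a}\sgn\!\Big(\sin\tfrac{(4l+1)(2\ell-1)\pi}{8a}\Big)\cdot\prod_{j=1}^{2a-2}\sgn\!\Big(\sin\tfrac{(4l+1)j\pi}{4a}\Big)^{m_a(j)}$$
equals $+1$ is also what the proof needs, and it is true.

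However, this is precisely the technical heart, and you do not prove it --- you only sketch two speculative routes (a Gauss-sum/reciprocity identity, or induction on the prime factorization of $a$), neither of which is what the paper actually does. The paper proves the two sine-product signs by direct interval counting in Lemmas \ref{lem:even-sine-2-8} and \ref{lem:odd-sine-2-8}. For the integer-angle product, one determines explicitly which $j$ give negative sines (namely $j\in\big[(8w+2)(2q-1)+1,\ (8w+2)2q\big]$ for $1\le q\le l$, which crucially uses $w>0$ to control the range) and then pairs the offending $j$'s in adjacent couples $(2t-1,2t)$ having equal multiplicity $m_a$, so the net sign is $+1$. For the half-angle product the same style of counting, split into the cases $l$ even and $l$ odd, yields $(-1)^l$; combined with $\jacobi{k}{a}=\jacobi{4l+1}{a}=(-1)^l$ via quadratic reciprocity for the Jacobi symbol, the $l$-dependence cancels. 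None of this is a Gauss-sum manipulation, and there is no obvious induction on the prime factorization of $a$ that would shortcut it, so as written your proposal leaves the main computation open.

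Separately, your verification of $\gcd(k,4a)=1$ contains an algebra error: $a=2l+1+w(8l+2)$ does \emph{not} factor as $(2l+1)(1+4w)$ --- expanding gives $2l+1+8lw+4w$, whereas $a=2l+1+8lw+2w$; for instance $l=1,w=1$ gives $a=13$, which is not divisible by $2l+1=3$. The correct (and short) argument is just the Euclidean algorithm on $4l+1$ and $a$: since $a\equiv 2l+1\pmod{4l+1}$ and $4l+1=2(2l+1)-1$, one has $\gcd(4l+1,a)=\gcd(4l+1,2l+1)=\gcd(1,2l+1)=1$; together with $k\equiv 4l+1\pmod{8a}$ odd this gives $\gcd(k,4a)=1$.
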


\begin{proof}
The first assertion follows directly by the Euclidean algorithm. We proceed to compute the sign of each component of $\s_{k}(D_a)$ in the right hand side of (\ref{eq:Dr}). The sign of the second sine component is 1 by the following lemma.
\begin{lem}\label{lem:even-sine-2-8} We have the following equality:
\begin{equation*}
\sgn\left(\s_{k}
\left(
\prod_{j = 1}^{2a-2}
\sin\left(\frac{j\pi}{4a}\right)^{m_a(j)}
\right)\right) = 1\,.
\end{equation*}
\end{lem}

\begin{proof}[Proof of Lemma \ref{lem:even-sine-2-8}]
Since $k \equiv 1 \pmod 4$, by Lemma \ref{lem:gal-sin}, we have
\begin{equation*}
\begin{aligned}
\sgn\left(\s_{k}
\left(
\prod_{j = 1}^{2a-2}
\sin\left(\frac{j\pi}{4a}\right)^{m_a(j)}
\right)\right)
=
\prod_{j = 1}^{2a-2}
\sgn\left(
\sin\left(
\frac{kj\pi}{4a}
\right)\right)^{m_a(j)}\,.
\end{aligned}
\end{equation*}
For each $j = 1, \ldots, 2a - 2$, we have
\begin{equation*}
\begin{aligned}
\sin\left(\frac{kj\pi}{4a}\right)
=
\sin\left(\frac{(8ya+4l+1)j\pi}{4a}\right)
=
\sin\left(\frac{(4l+1)j\pi}{4a}\right)\,.
\end{aligned}
\end{equation*}
Moreover, by the definition of $a$ and the assumption that $w > 0$, we have
\begin{equation*}
\frac{(4l+1)(2a-2)}{4a} - 2l
> 0\,.
\end{equation*}
Therefore,
\begin{equation*}
2l 
< 
\frac{(4l+1)(2a-2)}{4a} 
< 
\frac{(4l+1)(2a)}{4a} 
= 
2l+\frac{1}{2}\,.
\end{equation*}
Consequently, for $j = 1, \ldots, 2a - 2$, $\sgn\left(\sin\left(\frac{(4l+1)j\pi}{4a}\right)\right) = -1$ if and only if
\begin{equation}\label{eq:even-sine-range-2-8}
2q-1 < \frac{(4l+1)j}{4a} < 2q
\end{equation}
for some $1 \leq q \leq l$.

For any $q = 1, \ldots, l$, (\ref{eq:even-sine-range-2-8}) is equivalent to
\begin{equation*}
\begin{aligned}
(8w+2)(2q-1) + \frac{4q-2}{4l+1}
 < 
j
<
(8w+2)(2q) + \frac{4q}{4l+1}\,.
\end{aligned}    
\end{equation*}
Since $1 \leq q \leq l$, we have
\begin{equation*}
0 < \frac{4q-2}{4l+1} < \frac{4q}{4l+1} < 1\,.
\end{equation*}
So (\ref{eq:even-sine-range-2-8}) is equivalent to \begin{equation*}
(8w+2)(2q-1)+1 \le j \leq (8w+2)(2q)\,.    
\end{equation*}
There are exactly $8w+2$ integers between $(8w+2)(2q-1)+1$ and $(8w+2)(2q)$ inclusively. They can be written in pairs $(8w+2)(2q-1) + (2t-1), (8w+2)(2q-1) + 2t$ for $1 \leq t \leq 4w+1$. However, for each such $t$, we have
\begin{equation*}
\begin{aligned}
&
m_a((8w+2)(2q-1) + (2t-1))\\ 
=& 
a - \ceil{\frac{(8w+2)(2q-1)+(2t-1)}{2}}\\
=&
a - ((4w+1)(2q-1) + t)\\
=&
a - \ceil{\frac{(4w+1)(2q-1)+(2t)}{2}}\\
=&
m_a((8w+2)(2q-1)+ (2t))\,.
\end{aligned}
\end{equation*}
Thus we have
\begin{equation}\label{eq:even-sine-2-8}
\begin{aligned}
&
\prod_{j = 1}^{2a-2}
\sgn\left(
\sin\left(
\frac{kj\pi}{4a}
\right)\right)^{m_a(j)}\\
=&
\prod_{q = 1}^{l}\prod_{t=1}^{4w+1} 
(-1)^{m_a((8w+2)(2q-1) + (2t-1))} 
\cdot
(-1)^{m_a((8w+2)(2q-1) + 2t)}\\
=& 
1\,.
\end{aligned}
\end{equation}
as desired.\qedhere
\end{proof}
 The sign of the first sine component of the right hand side of \eqref{eq:Dr} is computed in the following lemma.
\begin{lem}\label{lem:odd-sine-2-8} We have the following equality:
\begin{equation*}
\sgn
\left
(\s_{k}
\left(
\prod_{j= 1}^{a} \sin
\left
(\frac{(2j-1)\pi}{8a}
\right)
\right)
\right) 
= (-1)^{y+l}\,.
\end{equation*}
\end{lem}

\begin{proof} [Proof of Lemma \ref{lem:odd-sine-2-8}]
Since $k \equiv 1 \pmod 4$, by Lemma \ref{lem:gal-sin}, we have
\begin{equation*}
\s_{k}
\left(
\prod_{j = 1}^{a}
\sin\left(\frac{(2j-1)\pi}{8a}\right)
\right)
=
\prod_{j = 1}^{a}
\sin\left(
\frac{k(2j-1)\pi}{8a}
\right)\,.
\end{equation*}
Thus, 
\begin{equation*}
\sgn\left(\s_{k}
\left(
\prod_{j = 1}^{a}
\sin\left(\frac{(2 j-1)\pi}{8a}\right)
\right)\right)
=
\prod_{j = 1}^{a}
\sgn\left(
\sin\left(
\frac{k(2j-1)\pi}{8a}
\right)\right)\,.
\end{equation*}
Moreover, by definition, we have
\begin{equation*}
\begin{aligned}
\sin\left(
\frac{k(2j-1)\pi}{8a}
\right)
&=
\sin\left(
\frac{(8ya+4l+1)(2j-1)\pi}{8a}
\right)\\
&=
(-1)^y\sin
\left(
\frac{(4l+1)(2j-1)\pi}{8a}
\right)\,.
\end{aligned}
\end{equation*}
Therefore, since $a$ is odd, we have
\begin{equation}\label{eq:odd-sine-decomp}
    \sgn\left(\s_{k}
\left(
\prod_{j = 1}^{a}
\sin\left(\frac{(2j-1)\pi}{8a}\right)
\right)\right)
=
(-1)^y
\prod_{j = 1}^{a}
\sgn\left(
\sin\left(
\frac{(4l+1)(2j-1)\pi}{8a}
\right)\right)\,.
\end{equation}

Since
\begin{equation*}
\frac{(4l+1)(2a-1)}{8a} - l  > 0\,,
\end{equation*}
we have
\begin{equation*}
l < \frac{(4l+1)(2a-1)}{8a} < \frac{(4l+1)(2a)}{8a} = l + \frac{1}{4}\,.
\end{equation*}

Now we consider two cases.

\textbf{Case 1.} If $l$ is even, then for any $j = 1, \ldots, a$, $\sgn(\sin(\frac{(4l+1)(2j-1)}{8a} \pi )) = -1$ if and only if
\begin{equation}\label{eq:odd-sine-even-l}
2q-1 < \frac{(4l+1)(2j-1)}{8a} < 2q    
\end{equation}
for some $1 \leq q \leq l/2$. Note that for any $q = 1, \ldots, l/2$, (\ref{eq:odd-sine-even-l}) is equivalent to 
\begin{equation*}
(8w+2)(2q-1) + \frac{4q-2}{4l+1} + \frac{1}{2}
<
j 
< 
(8w+2)(2q) + \frac{4q}{4l+1} + \frac{1}{2}\,.
\end{equation*}
Since $1 \leq q \leq l/2$, we have
\begin{equation*}
0 < \frac{4q-2}{4l+1} < \frac{4q}{4l+1} < \frac{1}{2}\,,    
\end{equation*}
(\ref{eq:odd-sine-even-l}) is equivalent to 
\begin{equation*}
(8w+2)(2q-1) + 1 \leq j \leq (8w+2)(2q) = 16wq+4q\,.
\end{equation*}
Hence, we have
\begin{equation*}
\prod_{j = 1}^{a}
\sgn\left(
\sin\left(
\frac{(4l+1)(2j-1)\pi}{8a}
\right)\right)\\
=
\prod_{q = 1}^{l/2} \left(\prod_{j = (8w+2)(2q-1)+1}^{16wq+4q}
(-1) \right)=1 
\end{equation*}
as there are $8w+2$ terms in each of the product corresponding to $q$. 

\textbf{Case 2.} If $l$ is odd, then for any $j = 1, \ldots, a$, $\sgn(\sin(\frac{(4l+1)(2j-1)}{8a} \pi )) = -1$ if and only if $j$ satisfies
\begin{equation}\label{eq:odd-sine-odd-l-range-1}
2q-1  < \frac{(4l+1)(2j-1)}{8a} < 2q    
\end{equation}
for some $1 \leq q \leq (l-1)/2$, or $j$ satisfies
\begin{equation}\label{eq:odd-sine-odd-l}
l < \frac{(4l+1)(2j-1)}{8a} < l+\frac{1}{4}\,.
\end{equation}
By the same argument as in Case 1, the sign for the $j$'s satisfying (\ref{eq:odd-sine-odd-l-range-1}) is equal to
\begin{equation*}
\prod_{q = 1}^{(l-1)/2} \left(\prod_{j = (8w+2)(2q-1)+1}^{16wq+4q}
(-1)\right) = 1\,.
\end{equation*}

Note that (\ref{eq:odd-sine-odd-l}) is equivalent to 
\begin{equation*}
8wl + 2l + \frac{2l}{4l+1} + \frac{1}{2}
<
j
<
a + \frac{1}{2}\,.
\end{equation*}
Since $\frac{2l}{4l+1} < \frac{1}{2}$, and by definition of $a$, (\ref{eq:odd-sine-odd-l}) is equivalent to
\begin{equation*}
    a-2w \le j \le a\,.
\end{equation*}
Note that there are $2w+1$ such $j$'s, and so their sign contribution is -1. Therefore,
\begin{equation*}
\begin{aligned}
&
\prod_{j = 1}^{a}
\sgn\left(
\sin\left(
\frac{(4l+1)(2j-1)\pi}{8a}
\right)\right)\\
=&
\prod_{q = 1}^{(l-1)/2} \left(\prod_{j = (8w+2)(2q-1)+1}^{16wq+4q}
(-1)\right)
\cdot
\prod_{j = a-2w}^{a}
(-1)
=-1\,.
\end{aligned}
\end{equation*}
This completes Case 2, and we have
$$
\prod_{j = 1}^a \sgn\left(
\sin\left(
\frac{(4l+1)(2j-1)\pi}{8a}
\right)\right) = (-1)^l\,.
$$
for any positive integer  $l$\,.

Combining with (\ref{eq:odd-sine-decomp}), we obtain
\begin{equation*}
\sgn
\left
(\s_{k}
\left(
\prod_{j = 1}^{a} \sin
\left
(\frac{(2j-1)\pi}{8a}
\right)
\right)
\right)
= (-1)^{y+l}\end{equation*}
as claimed. This completes the proof of Lemma \ref{lem:odd-sine-2-8}.
\end{proof}

Now we are ready to prove Proposition \ref{prop:sign-D-2-8}. By definition and the quadratic reciprocity of Jacobi symbols, we have
\begin{equation}\label{eq:jac-k-a}
\jacobi{k}{a} = \jacobi{4l+1}{a} = \jacobi{a}{4l+1} = \jacobi{2l+1}{4l+1} = \jacobi{4l+1}{2l+1} = \jacobi{-1}{2l+1} = (-1)^{l}\,.
\end{equation}
Therefore, by Lemma \ref{l:jac}, 
$$
\s_k\left(\sqrt{a^a}\right) =\s_k\left(a^{\frac{a-1}{2}} \sqrt{a} \right) = \jacobi{k}{a}a^{\frac{a-1}{2}} \sqrt{a}  = (-1)^{l} a^{\frac{a-1}{2}} \sqrt{a}\,,
$$
i.e., $\sgn(\s_k(\sqrt{a^{a}})) = (-1)^l$. The proposition follows directly from Lemmas \ref{lem:even-sine-2-8}, \ref{lem:odd-sine-2-8} and  (\ref{eq:Dr}).
\end{proof}


\subsection{The higher central charge homomorphism $\Xi$ on $\WW_{\un}$} 

In this subsection, we construct certain infinite sequences $\ba_l=\{a_{l, n}\}_{n=0}^\infty$ of positive integers and prove that the restriction of the signature homomorphism $I$ on the subgroup $G_l$ of $\WW_\un$ generated by $\{[\CC_{a_{l,n}}]\mid n \ge 0\}$ has kernel $\langle [\II]\rangle$. Moreover, we show that $\Xi|_{G_l}$ is injective, and the image $\{\smap([\CC_{a_{l,n}}])\mid n \ge 0\}$ in the super-Witt group $\sW$ is  $\BF_2$-linearly independent.

For any positive integer  $l \equiv 2 \pmod{4}$, we define the sequence $\ba_l=\{a_{l, n}\}_{n= 0}^{\infty}$ inductively by setting   $a_{l, 0} = 2l+1$, and defining $a_{l, n+1}$ to be the smallest positive integer such that 
$$
a_{l, n+1} \equiv 2l+1 \pmod{8l+2}
$$
and $\gcd(a_{l, n+1}, a_{l, j}) = 1$ for all $j = 0, \ldots, n$. The existence of the infinite sequence $\ba_l$ is guaranteed by the Dirichlet prime number theorem. For example, the sequence $\ba_2$ begins with 
$$
5, 23, 41, 59, 77, 113, 131, 149, 167, 221, 239, \dots
$$
Let $G_{l, n}$ be the subgroup of $\WW_\un$ generated by $\{[\CC_{a_{l,j}}]\mid j = 0, \ldots, n\}$, and 
$$G_l = \bigcup_{n = 0}^{\infty} G_{l,n}\,.$$ 
For any positive integer $r$ and $\s\in\GQ$, we simply denote
$$\ss{r}(\s) := \ss{\CC_r}(\s) = \ss{\CC_r}'(\s)\,.$$

\begin{thm}\label{thm:signature}
Let $l\equiv 2 \pmod{4}$ be a positive integer. Then the set of signatures 
$
\{\ss{a_{l,n}}\}_{n \ge 0}
$
is $\BF_2$-linearly independent, i.e., if 
$\ss{a_{l,0}}^{b_0} \cdots \ss{a_{l,n}}^{b_n} = 1$ for some integers  $b_0, \ldots, b_n$ and positive integer $n$,  then  $b_0 \equiv \cdots \equiv b_n \equiv 0 \pmod{2}$. In particular, 
$$
I(G_l) = \bigoplus_{n \ge 0}\, \langle \ss{a_{l,n}}\rangle\,.
$$
Moreover, $ G_{l, n} \cap \ker I  = \langle [\II] \rangle$ for all nonnegative integer $n$.
\end{thm}
\begin{proof} Suppose $\{\ss{a_{l,n}}\}_{n \ge 0}$ is dependent. Then, there exist a positive integer $n$ and some integers  $b_0, \cdots, b_n$ such that $b_m$ is odd for some $m \le n$ and   $\ss{a_{l,0}}^{b_0} \cdots \ss{a_{l,n}}^{b_n} = 1$. 
Since $\ss{r}^2 = 1$ for all $r \in \BN$, we may assume that $b_j > 0$ by adding some positive multiple of $2$ if necessary. 
Now, let $\AA =\CC_{a_{l,0}}^{\bp{b_0}}\boxtimes \cdots \boxtimes \CC_{a_{l.n}}^{\bp{b_n}}$ and $A=[\AA]$.
Then,
\begin{equation} \label{eq:sig_1}
    \ss{\AA} = \ss{a_{l,0}}^{b_0} \cdots \ss{a_{l,n}}^{b_n} = 1 \,.
\end{equation}

Now let $N := \lcm\{\ord(T_{a_{l,j}})\mid 0 \leq j \leq n \text{ and } j \ne m \}$. Then by Lemma \ref{lem:ord-T} (a),
$$
32\mid N, \quad \mathrm{and}\quad 
N\mid 32\prod_{\substack{0\leq j\leq n\\ j \ne m}} a_{l,j}\,.
$$
By the definition of $\ba_l$, the integer $a_{l,m}$ is coprime to 32 and $a_{l,j}$ for all $j \ne m$, and hence $\gcd(a_{l,m}, N) = 1$. Therefore, there exist $x, y\in\BZ$ such that 
$$x a_{l,m} + yN = 1\,.$$ 
Set $k := -4lx a_{l,m} + 4l+1 =4lyN + 1$. Then $k \equiv 1 \pmod N$. Moreover, by Proposition \ref{prop:sign-D-2-8}, $\gcd(k,a_{l,m}) = 1$. Therefore, $\gcd(k, Na_{l,m})=1$. 

Let $\s \in \GQ$ such that $\s|_{\BQ_{Na_{l,m}}}=\s_k$. For any $j = 0, \ldots, n$ and $j \ne m$, we have $D_{a_{l,j}} \in \BQ_{\ord(T_{a_{l,j}})} \subset \BQ_{N}$ by Proposition \ref{prop:Dr} or \cite{dong2015}. Since $k \equiv 1 \pmod N$,  $\s(D_{a_{l,j}}) = D_{a_{l,j}}$ and hence $\ss{a_{l,j}}(\s) = 1$. Also, by Proposition \ref{prop:sign-D-2-8}, we have $$\ss{a_{l,m}}(\s) = (-1)^{lx/2}=(-1)^x$$
since $l/2$ is odd.
Note that $xa_{l,m} + yN = 1$ implies that $x$ has to be odd, and so we have
$$
\ss{\AA}(\s)=\prod_{j = 0}^{n}
\ss{a_{l,j}}(\s)^{b_j} =\ss{a_{l,m}}(\s)
=
(-1)^{x} = -1\,,
$$
which contradicts \eqref{eq:sig_1}. Therefore $\{\ss{a_{l,n}}\}_{n \ge 0}$
is $\BF_2$-linearly independent.

Since $I([C_{a_{l,0}}]^2) = \ss{a_{l,0}}^2= 1$ and  $\langle [C_{a_{l,0}}]^2 \rangle = \langle [\II] \rangle$, $\langle [\II] \rangle \subset G_{l,n}\cap \ker I$ for any nonnegative integer $n$.  Conversely, if $A \in G_{l,n}\cap \ker I$, then 
$$
A = [\CC_{a_{l,0}}]^{b_0} \cdots [\CC_{a_{l.n}}]^{b_n} \quad \text{and}\quad I(A)= \ss{a_{l,0}}^{b_0} \cdots \ss{a_{l,n}}^{b_n} = 1\,.
$$
The preceding conclusion implies $b_0, \dots, b_n$ are all even. Since $[\CC_r]^2 \in \langle [\II] \rangle$ for all $r$, we find $A \in \langle [\II] \rangle$. 
Therefore, $ G_{l, n} \cap \ker I  = \langle [\II] \rangle$.
\end{proof}

The subgroup generated by $\{\smap([\CC_r]) \mid r\ge 1\}$ in $\sW$ is an abelian group of exponent 2. It is conjectured in \cite[Conjecture 5.21]{DNO} that $\{\smap([\CC_r]) \mid r\ge 1\}$ is linearly independent in $\sW$. The following corollary proves that this holds for infinitely many subsequences of $\{\smap([\CC_r]) \mid r\ge 1\}$, but we do not know whether they are in $\sW_2$ or not. The group $\sW_2$ will be further discussed in Section \ref{s:sW}.

\begin{cor}\label{c:indep}
For any positive integer $l \equiv 2 \pmod{4}$, the sequence
$$
\bsa_l = \{\smap([\CC_{a_{l,j}}])\mid j \ge 0\}
$$
is linearly independent in $\sW$.
\end{cor}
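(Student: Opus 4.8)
The plan is to deduce linear independence in $\sW$ of the sequence $\bsa_z = \{\smap([\CC_{a_{z,j}}]) \mid j \ge 0\}$ from the injectivity of $\Xi$ on $G_{\ba_z}$ established in Theorem \ref{thm:Gz}, by exploiting the compatibility between the $s$-map homomorphism $\smap: \WW \to \sW$ and the higher central charges. Concretely, a relation of linear dependence in $\sW$ among the $\smap([\CC_{a_{z,j}}])$ is, since $\sW_{\pt}$-type considerations aside each $\smap([\CC_r])$ has order dividing $2$ in $\sW$, an expression $\prod_{j=0}^{n} \smap([\CC_{a_{z,j}}])^{b_j} = [\svec]$ in $\sW$ with some $b_m$ odd. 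Applying $\smap$ to the relation $\prod_{j} [\CC_{a_{z,j}}]^{b_j}$ computed in $\WW$, this says $\smap(A) = [\svec]$ where $A := \prod_{j=0}^{n}[\CC_{a_{z,j}}]^{b_j} \in G_{\ba_z, n}$. Since $\ker(\smap)$ is the cyclic group of order $16$ generated by $[\II]$ (cf. \eqref{eq:s-map} and the surrounding discussion), $\smap(A) = [\svec]$ forces $A \in \langle [\II] \rangle$, i.e., $A = [\II]^t$ for some $t$.

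First I would record that $[\II]^t$ has first central charge $\xi_1(\II)^t$, which is $1$ only when $[\II]^t = [\Vs]$ (as used in the proof of Theorem \ref{thm:Gz}, citing \cite{DGNO}). So either $A = [\Vs]$, which contradicts Lemma \ref{lem:neg-new} because some $b_m$ is odd; or $A = [\II]^t$ with $t \not\equiv 0 \pmod{16}$, in which case $\xi_1(A) = \xi_1(\II)^t \ne 1$, so that $\Xi(A)(\id) = \xi_1(A) \ne 1$ and hence $A \notin \ker(\Xi)$. But from Lemma \ref{lem:neg-new}, applied to the element $A = \prod_{j=0}^n [\CC_{a_{z,j}}]^{b_j} \in G_{\ba_z,n}$ with $b_m$ odd, we also get $A \notin \ker(\Xi)$ — so this branch is not immediately contradictory. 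The cleaner route is: since $A = [\II]^t$, all exponents $b_j$ contribute only through $[\II]$-powers, which would make the $b_j$ effectively even modulo the structure of $G_{\ba_z,n} \cong \BZ/32 \oplus (\BZ/2)^{\oplus n}$ from Theorem \ref{thm:Gz}; under that isomorphism $[\II]$ generates the $\BZ/16$ inside the $\BZ/32$ summand, and $[\CC_{a_{z,j}}]$ for $j \ge 1$ maps to generators of distinct $\BZ/2$ summands, so $A \in \langle[\II]\rangle$ forces $b_j$ even for all $j \ge 1$ and hence $b_m$ even — contradicting $b_m$ odd (if $m \ge 1$), while if $m = 0$ then $A = [\CC_{a_{z,0}}]^{b_0}\cdot(\text{even stuff})$ with $b_0$ odd cannot lie in $\langle [\II]\rangle$ since $[\CC_{a_{z,0}}]$ has order $32$, not $16$.

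So the argument reduces to: a dependence relation in $\sW$ pulls back to $A \in G_{\ba_z,n}$ with $\smap(A) = [\svec]$, hence $A \in \langle [\II]\rangle$, hence by the explicit structure of $G_{\ba_z,n}$ in Theorem \ref{thm:Gz} all $b_j$ are even, contradicting the presence of an odd exponent. The main obstacle is bookkeeping: I must be careful that the exponents in the $\sW$-relation, which a priori live in $\BZ/2$, lift to integer exponents in the $\WW$-relation in a way that preserves the "some exponent is odd" hypothesis, and I must correctly identify $\langle [\II] \rangle$ inside the explicit decomposition $G_{\ba_z,n} \cong \BZ/32 \oplus (\BZ/2)^{\oplus n}$ — in particular that $[\CC_{a_{z,0}}]$ is not killed mod $\langle [\II]\rangle$ (it generates $G_{\ba_z,0}/\langle[\II]\rangle \cong \BZ/2$ by Theorem \ref{thm:Gz}). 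With that identification in hand the conclusion is immediate.
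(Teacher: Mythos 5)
Your proof is correct, and the core strategy is the same as the paper's: lift the dependence relation to $\WW$, observe that the resulting element $A=\prod_j[\CC_{a_{z,j}}]^{b_j}$ lands in $\ker(\smap)=\langle[\II]\rangle$, and then derive a contradiction from the presence of an odd exponent. Where you differ is in the final step. After correctly noting that $A\in\langle[\II]\rangle$ does not directly contradict Lemma~\ref{lem:neg-new} (since $A$ need not be $[\Vs]$), you invoke the explicit isomorphism $G_{\ba_z,n}/\langle[\II]\rangle\cong(\BZ/2)^{n+1}$ from Theorem~\ref{thm:Gz}, with the images of the $[\CC_{a_{z,j}}]$ as a $\BZ/2$-basis, to force all $b_j$ even. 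The paper instead sidesteps the issue in one line: since $\langle[\II]\rangle=\langle[\CC_{r_1}]^2\rangle$, the $\II$-power can be absorbed into the $[\CC_{r_1}]$ factor, giving $[\CC_{r_1}]^{b}[\CC_{r_2}]\cdots[\CC_{r_n}]=[\Vs]$ with $b$ odd, which contradicts Lemma~\ref{lem:neg-new} directly. Your route is valid because Theorem~\ref{thm:Gz} already encodes the needed independence; it simply goes through a heavier-weight consequence of Lemma~\ref{lem:neg-new} where the paper's substitution trick lets it quote the lemma itself. As a small streamlining note, the initial branch where you observe $\xi_1(A)\neq 1$ and $A\notin\ker(\Xi)$ is a dead end you yourself flag, and could be cut; the structural argument via $G_{\ba_z,n}/\langle[\II]\rangle$ (or the paper's absorption trick) is all you need.
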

\begin{proof}
Apply the signature homomorphism $sI$ to the sequence $\{\smap([\CC_{a_{l,n}}]) \mid n\ge 0\}$.  By Corollary \ref{c:comm}, we find
$$
\{sI\circ \smap([\CC_{a_{l,n}}])\}_{n\ge 0} = \{\ss{s_{l,n}}\}_{n\ge 0} 
$$
which is $\BF_2$-linearly independent by Theorem \ref{thm:signature}. Therefore, $\{\smap([\CC_r]) \mid r\ge 1\}$ is linearly independent in $\sW$.
\end{proof}

The commutativity of the  diagram
$$
\begin{tikzcd}
G_l \ar[rr, "\smap"] \ar[rd, "I"']
&&
\smap(G_l) \ar[ld, "sI",  "\sim"{sloped}]\\
&
I(G_l) 
&
\end{tikzcd}
$$
implies the equivalence of the restriction  of  $\smap$ and $I$ on $G_l$. Now, we can determine the isomorphism class of $G_{l}$ using the higher central charges homomorphism $\Xi$ or simply the signature homomorphism $I$.

\begin{cor}\label{c:Gz}
For any positive integer $l \equiv 2 \pmod{4}$,  $G_l$ has the direct sum decomposition:
\begin{equation} \label{eq:decomp}
    G_l = \langle [\CC_{a_{l,0}}] \rangle \oplus  \bigoplus_{n \ge 1} \langle C_n \rangle   \cong \BZ/{32} \oplus (\BZ/2)^{\oplus \BN}\,.
\end{equation}
where $C_n = [\CC_{a_{l, n}}] \cdot [\CC_{a_{l, 0}}]^{-i_n}$ for $n \ge 1$, where $i_n$ is an integer such that $[\CC_{a_{l, 0}}]^{2 i_n} =  [\CC_{a_{l, n}}]^2$.   Moreover, $\Xi|_{G_l}$ is injective.  
\end{cor}
\begin{proof} 
For any  integer $n \ge 0$, since $\langle [\CC_{a_{l, n}}]^2 \rangle = \langle [\CC_{a_{l, 0}}]^2 \rangle$, there exists an integer $i_n$ such that  $[\CC_{a_{l, n}}]^2 = [\CC_{a_{l, 0}}]^{2 i_n}$. Then $C_n^2 = [\Vs{}]$ for $n \ge 1$, and $G_l$ is generated  by the elements  $[\CC_{a_{l,0}}]$ and $ C_n$, $n \ge 1$. Suppose 
$$
[\Vs{}]=[\CC_{a_{l,0}}]^{b_0} C_1^{b_1} \cdots C_n^{b_n}
$$
for some positive integer $n$ and integers $b_0, \dots, b_n$. Note that $\ss{C_j} = \ss{a_{l,j}}$, and so we have
$$
1 =  \ss{a_{l,0}}^{b_0-(i_1 b_1 +\cdots +i_n b_n)} \ss{a_{l,1}}^{b_1} \cdots \ss{a_{l,n}}^{b_n} \,.
$$
By Theorem \ref{thm:signature}, we have $b_0, b_1, \dots, b_n$ are all even and hence $C_j^{b_j}=[\Vs{}]$ for $j >0$. Thus, we have $[\CC_{a_{l,0}}]^{b_0}=[\Vs{}]$, and this proves the direct sum decomposition \eqref{eq:decomp}. 
The second isomorphism follows immediately from the fact that $\ord([\CC_{a_{l,0}}])=32$. 

Finally, if $A = [\CC_{a_{l,0}}]^{b_0} C_1^{b_1} \cdots C_n^{b_n} \in \ker(\Xi)$, then $I(A) =1$ and $\xi_1(A)=1$ by Proposition \ref{p:ker}. It follows from Theorem \ref{thm:signature} that $b_0, \dots, b_n$ are all even and so $A = [\CC_{a_{l,0}}]^{b_0} \in \langle [\II]\rangle$. Since the first central charge homomorphism $\xi_1$ is injective on $\langle [\II]\rangle$, $\xi_1(A) = 1$ implies $A=[\Vs{}]$. Therefore, $\Xi$ is injective on $G_l$.
\end{proof}

\begin{remark}\label{r:subseq}
By the same argument, Corollary \ref{c:Gz} holds for any infinite subsequence of $\ba_l$. This version of Corollary \ref{c:Gz} will be used in Theorems \ref{t:sqroot} and \ref{thm:sW2}.
\end{remark}

It is not difficult to show that the higher central charge homomorphism $\Xi$ is injective on $\WW_{\pt}(p)$, the Witt subgroup of $\WW_\pt(p)$ generated by the Witt classes of the pointed modular categories $\CC(H,q)$  where $H$ is a finite abelian $p$-group. However, the kernel of $\Xi|_{\WW_{\pt}}$ is not trivial. 

For any odd prime $p$, let $A_p$ be the unique Witt class in $\WW_\pt(p)$ of trivial signature. In particular, $A_p$ has order 2 and can be represented by an abelian group of order $p^2$. Let $A_2 := [\II]^8$,  which is the unique class of $\WW_{\pt}(2)$ of order 2 with trivial signature. Therefore, $A_p$ is the unique element of $\WW_\pt(p)$ of order 2 with trivial signature for all prime $p$.

\begin{remark}\label{r:Ap}
Note that the first central charge of $A_p$ is $-1$ for any prime $p$, and so $A_p \not\in \ker(\Xi)$. However, for any primes $p$ and $p'$, $A_p A_{p'} \in \ker (\Xi)$ by Proposition \ref{p:ker}.
\end{remark}

\begin{prop}\label{prop:ker-Xi-pt}
The intersection $\ker(\Xi)\cap\WW_\pt$ is generated by $A_{p}A_{p'}$, where $p$ and $p'$ are distinct primes.
\end{prop}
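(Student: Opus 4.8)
The plan is to combine the primary decomposition of the classical Witt group with the description of $\ker(\Xi)$ in Proposition~\ref{p:ker}. Writing $\xi_1(x) := \Xi(x)(\id)$ for the (Witt-invariant) first central charge and $I : \WW\to\U$ for the signature homomorphism $[\CC]\mapsto\ss{\CC}$, Proposition~\ref{p:ker} identifies $\ker(\Xi)\cap\WW_\pt$ with $\ker(\xi_1)\cap\ker(I)\cap\WW_\pt$. I will also use two facts recorded in the text: every element of $\WW_\pt$ is a finite product $x=\prod_{p\in S}x_p$ with $x_p\in\WW_\pt(p)$, obtained by splitting the underlying abelian group into its Sylow subgroups; and $\Xi$ restricts to an \emph{injective} homomorphism on each $\WW_\pt(p)$, so that $\ker(\Xi)\cap\WW_\pt(p)=\{[\Vs]\}$ for every prime $p$. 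The inclusion ``$\supseteq$'' is then immediate: by Remark~\ref{r:Ap} the first central charge of each $A_p$ is $-1$, so $\xi_1(A_pA_{p'})=1$; and $I(A_pA_{p'})=I(A_p)I(A_{p'})$ is trivial since $A_p$ and $A_{p'}$ have trivial signature; hence $A_pA_{p'}\in\ker(\Xi)\cap\WW_\pt$.

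For ``$\subseteq$'', take $x\in\ker(\Xi)\cap\WW_\pt$ and fix a decomposition $x=\prod_{p\in S}x_p$ with $S$ finite and each $x_p\in\WW_\pt(p)\setminus\{[\Vs]\}$. The first step is to control the signature of each factor. Every class in $\WW_\pt(p)$ is represented by some $\CC(H,q)$ with $|H|$ a power of $p$ (the set of such classes is closed under $\boxtimes$ and under inverses), so, since $I$ is multiplicative and trivial on positive rationals, $I(x_p)=\ss{}(\sqrt{|H|})=\ss{}(\sqrt{p}\,)^{\epsilon_p}$ for some $\epsilon_p\in\{0,1\}$ (the parity of the exponent of $p$ in $|H|$). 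From $I(x)\equiv 1$ we get $\prod_{p\in S}\ss{}(\sqrt{p}\,)^{\epsilon_p}\equiv 1$, and here the key number-theoretic input is that $\{\ss{}(\sqrt{p}\,):p\text{ prime}\}$ is linearly independent over $\BF_2$ in $\U$: a nonempty product of such functions equals $\ss{}(\sqrt{d}\,)$ for a squarefree integer $d>1$, which is nontrivial because $\sqrt{d}\notin\BQ$. Hence $\epsilon_p=0$ for all $p\in S$, i.e.\ every $x_p$ has trivial signature.

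It remains to identify the factors. For odd $p\in S$, $x_p$ is a nonzero class of $\WW_\pt(p)$ of trivial signature, hence $x_p=A_p$ by the defining property of $A_p$, and $\xi_1(x_p)=-1$ by Remark~\ref{r:Ap}. If $2\in S$, then $1=\xi_1(x)=\xi_1(x_2)\cdot(-1)^{|S|-1}$, so $\xi_1(x_2)=(-1)^{|S|-1}\in\{\pm1\}$; the value $+1$ would put $x_2$ in $\ker(\xi_1)\cap\ker(I)\cap\WW_\pt(2)=\ker(\Xi)\cap\WW_\pt(2)=\{[\Vs]\}$, contradicting $2\in S$, so $\xi_1(x_2)=-1=\xi_1(A_2)$ and $x_2A_2^{-1}\in\ker(\Xi)\cap\WW_\pt(2)=\{[\Vs]\}$, i.e.\ $x_2=A_2$. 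Therefore $x=\prod_{p\in S}A_p$, and $\xi_1(x)=(-1)^{|S|}=1$ forces $|S|$ to be even; pairing up the primes of $S$ exhibits $x$ as a product of classes of the form $A_pA_{p'}$, which completes the argument.

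The step I expect to be the main obstacle is the separation of the prime-primary contributions: a priori the signatures and first central charges coming from the different $\WW_\pt(p)$ could cancel one another without any individual $x_p$ being $[\Vs]$ or $A_p$. On the signature side this is dissolved cleanly by the $\BF_2$-independence of $\{\ss{}(\sqrt{p}\,)\}$, equivalently the independence of distinct primes in $\BQ^\times/(\BQ^\times)^2$. On the first-central-charge side there is no such global independence to invoke directly; instead one must bootstrap, once the signatures are under control, from $\xi_1(A_p)=-1$ together with the injectivity of $\Xi$ on each $\WW_\pt(p)$ and the fact that $A_p$ is the unique nonzero trivial-signature class of $\WW_\pt(p)$ for $p$ odd. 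Pinning down those ingredients is where the remaining care in a full write-up will concentrate.
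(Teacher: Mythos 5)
Your proof is correct and follows essentially the same route as the paper's: decompose $x$ into its $\WW_\pt(p)$-components, use Proposition \ref{p:ker} to reduce to the vanishing of $\xi_1$ and the signature $I$, show each component has trivial signature via the $\BF_2$-independence of $\{\ss{}(\sqrt{p}\,)\}$ (the paper phrases this as $\sqrt{h}\notin\BQ$ forcing a sign-flipping Galois automorphism, where $h$ is the order of the anisotropic representative), identify odd-prime components with $A_p$ by uniqueness, bootstrap $x_2=A_2$ from injectivity of $\Xi$ on $\WW_\pt(2)$, and conclude $|S|$ is even. The one stylistic difference is that you compute $\xi_1(x_2)=(-1)^{|S|-1}$ directly and rule out $+1$, whereas the paper first rules out $n$ odd by a contradiction with injectivity and then deduces $\xi_1(B_2)=-1$; these are the same bootstrap, just run in a slightly different order.
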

\begin{proof}
Let $B \in \ker(\Xi)\cap\WW_\pt$ be a non-trivial Witt class. Then there exists an anisotropic metric group $(H, q)$ such that $B = [\CC(H, q)]$ such that $\xi_1(\CC(H, q)) = 1$, and  $I(B)=1$ by Proposition \ref{p:ker}.

Let $h = \prod_{j = 1}^{n} p_{j}^{e_{j}}$ be the order of $H$, where $p_1, \dots, p_n$ are distinct primes. Then 
$$
B = B_{p_1}\cdots B_{p_n}
$$
where $B_{p_j} \in \WW_\pt(p_j)$ is nontrivial for $j=1, \dots, n$. 

We claim that  $e_j$ has to be even for all $j = 1, ..., n$. In particular, $I(B_{p_j})=1$. Otherwise, $\sqrt{h} \not\in \BQ$ and there exists $\s \in \GQ$ such that $\s(\sqrt{h})=-\sqrt{h}$, that means $I(B)(\s)=-1$, 
a  contradiction.

To complete the proof, it suffices to show $n$ is even and $B_{p_j} = A_{p_j}$ for $j=1, \dots, n$.
Since $e_j$ is even for any $j = 1, ..., n$,  $B_{p_j} = A_{p_j}$ if $p_j$ is odd. Thus, if $h$ is odd, so are $p_j$ for all $j$. Then 
$$
\prod_{j = 1}^{n} A_{p_j}  = \prod_{j = 1}^{n} B_{p_j} = B \in \ker(\Xi)
$$
implies that $n$ must be even by Remark \ref{r:Ap}.

We now consider the case when $h$ is even. Then  $h = 2^{e_1}\prod_{j = 2}^{n}p_j^{e_{j}}$ for some even positive integer $e_1$. Suppose $n$ is odd.  Then
$$
\prod_{j = 2}^{n} B_{p_{j}} =\prod_{j = 2}^{n} A_{p_{j}} \in \ker(\Xi)
$$
by Remark \ref{r:Ap}.  Since $B \in \ker(\Xi)$, and so $B_2\in \ker(\Xi)$. However, this contradicts that $\Xi$ is injective on $\WW_\pt(2)$ (cf. \cite[Example 6.2]{NSW}). Therefore, $n$ is also even in this case. Consequently, $B_2$ has the same first central charge as $\prod_{j = 2}^{n} A\inv_{p_{j}}$, which is $-1$. Therefore, $B_2=A_2$ as $I(B_2)=1$.
\end{proof}

Theorem \ref{thm:signature} and Proposition \ref{prop:ker-Xi-pt} inspire the following questions.

\begin{question}
Is the sequence $\{\ss{r}\}_{r\ge 0}$ linearly independent? An affirmative answer to this question implies the linearly independence of the sequence $\{\smap([\CC_r])\}_{r\ge 0}$ in $\sW$. 
\end{question}

\begin{question}
Is the intersection between $\ker(\Xi)$ and the torsion subgroup $\Tor(\WW_{\un})$ contained in $\WW_\pt$?
\end{question}

\section{The group $s\WW_2$} \label{s:sW}
Let $\sW_{\pt}$ be the image $\smap(\WW_{\pt})$ in $\sW$. By \cite[Proposition 5.18]{DNO}, the super-Witt group $\sW$ can be decomposed into a direct sum
$$
\sW = \sW_{\pt} \oplus \sW_2 \oplus \sW_\infty
$$
where $\sW_2$ (resp.~$\sW_\infty$) is the subgroup of $\sW$ generated by the Witt classes of completely anisotropic s-simple fusion categories of Witt order 2 (resp.~of infinite Witt order). In particular, the torsion part of $\sW$ is $\sW_{\pt} \oplus \sW_2$. It is conjectured \cite[Conjecture 5.21]{DNO} that $\sW_2$ has infinite rank.  In this section, we give a proof for this conjecture in Theorem \ref{thm:sW2}. We also prove that $[\II]$ has infinitely many square root in $\WW$ modulo $\WW_\pt$ (Theorem \ref{t:sqroot}).

Fix a positive integer $l \equiv 2 \pmod{4}$ as before. Consider the subsequence $\bq=\{p_j\}_{j=0}^\infty$ of $\ba_l$ consisting of all prime number terms.  Again by the Dirichlet prime number theorem, $\bq$ is an infinite sequence. Let $G_\bq$ be the subgroup of $G_l$ generated by $\{[\CC_{p_j}]\}_{j \ge 0}$.

\begin{prop}\label{prop:G-and-pt}
We have ${G_\bq} \cap \WW_\pt = \langle [\II]^2\rangle$.
\end{prop}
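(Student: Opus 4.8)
I would prove the two inclusions separately, the substantial one being $G_\bq\cap\WW_\pt\subseteq\langle[\II]^2\rangle$.

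For $\langle[\II]^2\rangle\subseteq G_\bq\cap\WW_\pt$: every term of $\bq$ is $\equiv 2l+1\equiv 1\pmod 4$, so \eqref{eq:xi-of-Ising} gives $[\CC_{p_0}]^2=[\II]^3$; as $\gcd(3,16)=1$ and $[\II]$ has order $16$, $[\II]=([\CC_{p_0}]^2)^{11}\in G_\bq$, hence $[\II]^2\in G_\bq$. And $[\II]^2$ is the Witt class of a pointed modular category of global dimension $4$ (e.g.\ $\CC(\BZ/4,q)$ for a suitable non-degenerate $q$; cf.\ \cite{DGNO}), so $[\II]^2\in\WW_\pt$.

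For the reverse inclusion, take $A\in G_\bq\cap\WW_\pt$ and, using Theorem \ref{thm:Gz}, write it uniquely as $A=[\CC_{p_0}]^{b}\prod_{j\in S}[\CC_{p_j}]$ with $b\in\BZ/32$ and $S\subseteq\{1,2,\dots\}$ finite. First I would dispatch the case $S=\emptyset$: then $\xi_1(A)=\zeta_{32}^{\,bp_0^{2}}$, and since $A$ is a pointed Witt class its first central charge is an $8^{\text{th}}$ root of unity (classical Gauss sum $\tau(H,q)=\zeta\sqrt{|H|}$, together with Witt-invariance of $\xi_1$ on $\WW_{\un}$); this forces $4\mid b$, whence $A=([\CC_{p_0}]^2)^{b/2}=[\II]^{3b/2}\in\langle[\II]^2\rangle$ as $b/2$ is even. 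To exclude $S\neq\emptyset$ the key point I would use is that, $A$ being pointed, its categorical dimension signature $I'(A)$ is a genuine \emph{group homomorphism} $\GQ\to\{\pm1\}$: for an anisotropic pointed representative $\CC(H,q)$ one has $I'(A)=\ss{}(\sqrt{\dim(\CC(H,q))})=\ss{}(\sqrt{|H|})$ with $|H|\in\BN$, and $\s\mapsto\sgn(\s(\sqrt{|H|}))$ is simply the quotient map of $\GQ$ onto $\Gal(\BQ(\sqrt{|H|})/\BQ)$ followed by an embedding into $\{\pm1\}$.

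\textbf{The contradiction and the hard part.} Assuming $S\neq\emptyset$, set $m=\max S$ and $N=\lcm\{\ord(T_{p_j}):j\in(S\setminus\{m\})\cup\{0\}\}$; by Lemma \ref{lem:ord-T}(a), $32\mid N$, while $\gcd(N,p_m)=1$ since the $p_j$ are distinct odd primes coprime to $8l+2$. For $j\ne m$, $D_{p_j}\in\BQ_{\ord(T_{p_j})}\subseteq\BQ_N$ by Lemma \ref{lem:ord-T}(b), so on $\Gal(\overline{\BQ}/\BQ_N)$ one has $I'(A)=\ss{\CC_{p_m}}$; and because $D_{p_m}\in\BQ_{32p_m}$ with $\BQ_{32p_m}\cap\BQ_N=\BQ_{32}$, the function $\ss{\CC_{p_m}}$ restricted to $\Gal(\overline{\BQ}/\BQ_N)$ factors through $\Gal(\BQ_{32p_m}/\BQ_{32})\cong(\BZ/p_m)^\times$, i.e.\ depends only on the residue $k\bmod p_m$ of the automorphism. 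Since $I'(A)$ is a homomorphism, the induced function $(\BZ/p_m)^\times\to\{\pm1\}$ is a homomorphism; it is nontrivial because the construction in the proof of Lemma \ref{lem:neg-new} (Proposition \ref{prop:sign-D-2-8} with $a=p_m$) produces an automorphism in $\Gal(\overline{\BQ}/\BQ_N)$ acting on $\BQ_{p_m}$ by $\zeta_{p_m}\mapsto\zeta_{p_m}^{\,4l+1}$ and having $\ss{\CC_{p_m}}$-value $-1$; hence it equals the Legendre symbol $\jacobi{\cdot}{p_m}$, and evaluating at $4l+1$ forces $\jacobi{4l+1}{p_m}=-1$, contradicting $\jacobi{4l+1}{p_m}=(-1)^l=1$ from \eqref{eq:jac-k-a} (recall $l=4z+2$ is even). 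So $S=\emptyset$, finishing the proof. The step I expect to need the most care is the Galois bookkeeping in the previous sentences — the identity $\BQ_{32p_m}\cap\BQ_N=\BQ_{32}$, the surjectivity of $\Gal(\overline{\BQ}/\BQ_N)\to(\BZ/p_m)^\times$, and the exact residue mod $p_m$ of the automorphism supplied by Proposition \ref{prop:sign-D-2-8}; once this is in place, the contradiction is exactly the $-1$ versus $(-1)^l$ phenomenon already powering Lemma \ref{lem:neg-new}, now played against the rigidity that a pointed Witt class has homomorphic signature.
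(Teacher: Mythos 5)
Your proof is correct but takes a genuinely different route from the paper's. Both arguments ultimately play the $\varepsilon_{\CC_{p_m}}(\s)=-1$ value coming from Proposition \ref{prop:sign-D-2-8} against the quadratic-reciprocity computation $\jacobi{4l+1}{p_m}=(-1)^l=1$, but they package the pointed side quite differently. The paper picks an anisotropic pointed representative $\CC(H,q)$, factors $h=|H|$ as $h=h_1 p_n^s$ with $\gcd(h_1,p_n)=1$, builds the modulus $M=32\,h_1\prod_{j<n}p_j$ so that $\s_k$ fixes $\sqrt{h_1}$, and then computes $\ss{\HH}(\s_k)=\jacobi{k}{p_n}^s=1$ directly via Jacobi symbols — so the modulus depends on $|H|$, which the Witt class does not canonically supply. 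Your key observation is more structural: since $A$ is pointed, $I'(A)=\ss{}(\sqrt{|H|})$ with $|H|\in\BN$ is a genuine group homomorphism $\GQ\to\{\pm1\}$ (factoring through $\Gal(\BQ(\sqrt{|H|})/\BQ)$), so its restriction to $\Gal(\overline{\BQ}/\BQ_N)$ — where $N$ is chosen from the $T$-matrix orders alone — induces a homomorphism $(\BZ/p_m)^\times\to\{\pm1\}$ that must be either trivial or the Legendre symbol. This removes any reference to $|H|$ in the choice of modulus. Your split of the argument into $S=\emptyset$ (handled by $\xi_1\in\mu_8$ and Witt-invariance of $\xi_1$, using $4\mid b$) and $S\ne\emptyset$ (the character argument) is also cleaner than the paper's single uniform "all $b_j$ even" reduction, and it sidesteps a corner case the paper glosses over: when the maximal odd-exponent index is $0$ and $p_0=a_{z,0}=2l+1$ (e.g.\ $z=0$, $p_0=5$), Proposition \ref{prop:sign-D-2-8} does not apply since it requires $w>0$, whereas your $\xi_1$ argument handles that case directly and your $S\neq\emptyset$ branch always has $p_m=a_{z,n_m}$ with $n_m\ge 1$, hence $w\ge 1$. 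Two small notational slips worth correcting: $\xi_1(\CC_{p_0})=\zeta_{32}^{(2p_0+1)^2}$, not $\zeta_{32}^{p_0^2}$ (harmless since both exponents are odd), and the uniqueness of the normal form $[\CC_{p_0}]^b\prod_{j\in S}[\CC_{p_j}]$ should be justified by Lemma \ref{lem:neg-new} rather than read off directly from the abstract isomorphism $G_{\bq}\cong\BZ/32\oplus(\BZ/2)^{\oplus\BN}$, since the generators used in Theorem \ref{thm:Gz}'s inductive splitting are $[\CC_{f_{v+1}}][\AA]^{-1}$ rather than $[\CC_{f_{v+1}}]$ themselves.
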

\begin{proof}
Since $[\II]^2 \in \WW_\pt$ and $\langle [\II]^2 \rangle = \langle [\CC_{p_0}]^4 \rangle$, $\langle [\II]^2 \rangle \subseteq {G_\bq} \cap \WW_\pt$. 
Suppose  $A \in G_\bq \cap \WW_\pt$  is nontrivial. Then $A = \prod_{j = 0}^{n}\witt{\CC_{p_{j}}}^{b_{j}}$ for some integers $b_0,\dots, b_n$. As is illustrated in the proof of Theorem \ref{thm:signature}, we can assume that all the $b_j$'s are nonnegative, and we let $\AA=\CC_{p_{0}}^{\bp{b_{0}}}\boxtimes \cdots \boxtimes \CC_{p_{n}}^{\bp{b_{n}}}$. 

We first show that all the $b_j$'s are even. If not, there exists a nonnegative integer $m \le n$ such that $b_m$ is odd. Since $[\CC_{p_j}]^2 \in \langle [\CC_{p_0}]^2\rangle$ for all $j$, we may simply assume $m=n$.
 
Since $A \in \WW_\pt$, there exists a finite abelian group $H$ and a non-degenerate quadratic form $q: H \to \BC^\times$ such that the corresponding pseudounitary modular category $\HH=\CC(H, q)$  satisfies $[\HH] = A$ in $\WW_{\un}$. By Theorem \ref{thm:cat-dim-sgn},  
\begin{equation}\label{eq:same-sgn}
\ss{\AA}(\s) = \ss{\HH}(\s)
\end{equation}
for all $\s \in \GQ$.

Let $h$ denote the order of  $H$. Since $p_{n}$ is an odd prime different from $p_0, \dots, p_{n-1}$, we can write $h$ as $h = h_1h_2$, where $\gcd(h_1, p_{n}) = 1$, and $h_2=p_{n}^s$ for some $s \in \BN\cup\{0\}$. Let
$$
M:= 32\cdot h_1 \cdot \prod_{j = 0}^{n-1} p_{j}\,.
$$
Again, Lemma \ref{lem:ord-T} (a) implies that  $\ord(T_{p_{j}})\mid M$ for $j=0,\dots, n-1$. Moreover, by construction, $\gcd(p_{n}, M) = 1$. Hence, there exist $x, v\in \BZ$ such that 
$$
xp_{n} + vM = 1\,.
$$
Note that since $M$ is even, $x$ has to be odd. 

Set $k:=-4lxp_{n} + 4l+1 = 4lvM+1$. Then $k \equiv 1\pmod{M}$ and $\gcd(k, p_{n}) = 1$ by Proposition 6.3. Therefore, we have $k \equiv 1\pmod{h_1}$, and $\gcd(k, p_{n}M) = 1$.

Let $N=Mp_{n}$ and $\s\in \GQ$ such that $\s|_{\BQ_{N}} = \s_k$. By Proposition 5.2 or \cite{dong2015} again,  $D_{p_{j}} \in \BQ_{\ord(T_{p_{j}})}\subset \BQ_M$ for $j = 0, \ldots, n-1$. Since $k\equiv 1\pmod{M}$,  $\s(D_{p_{j}}) = D_{p_{j}}$, and hence $\ss{p_{ j}}(\s) = 1$ for $j = 0, .., n-1$. Apply Proposition \ref{prop:sign-D-2-8}, we have
$$
\ss{p_{n}}(\s) = (-1)^{lx/2} = -1\,.
$$
Since $b_n$ is odd, we have
$$
\ss{\AA}(\s) = \prod_{j = 0}^{n}\ss{p_{j}}(\s)^{b_j} = \ss{p_{n}}(\s)^{b_n}= -1\,.
$$

By the definition of $M$, we have $\sqrt{h} \in\BQ_{N}$ and $\sqrt{h_1} \in \BQ_{M}$. On one hand, since $k \equiv 1\pmod{M}$, $\s(\sqrt{h_1}) = \sqrt{h_1}$. On the other hand, $k \equiv 1\pmod{4}$. By the same computation as \eqref{eq:jac-k-a}, we have
$$
\jacobi{k}{p_{n}} = (-1)^l  = 1\,.
$$
Therefore, by Lemma \ref{l:jac} and $h_2 = p_{n}^s$, we have
\begin{equation}
    \ss{\HH}(\s) = \frac{\s(\sqrt{h_1})}{\sqrt{h_1}}\frac{\s(\sqrt{h_2})}{\sqrt{h_2}} = \frac{\s(\sqrt{h_2})}{\sqrt{h_2}} = \jacobi{k}{h_2} = \jacobi{k}{p_{n}}^s = 1\,,
\end{equation}
contradicting \eqref{eq:same-sgn}. Therefore, $b_0, \dots, b_n$ are all even, and hence $A \in \langle [\II] \rangle$. 

Since $\langle [\II] \rangle \cap \WW_\pt = \langle [\II]^2 \rangle$, $A \in \langle [\II]^2 \rangle$. Therefore, $G_\bq \cap \WW_\pt = \langle [\II]^2 \rangle$.
\end{proof}

\begin{thm} \label{t:sqroot}
The Witt class of the Ising modular category $\II$ has infinitely many square roots in $\WW$ modulo $\WW_\pt$.
\end{thm}
\begin{proof}
Let $\bq=\{p_j\}_{j=0}^\infty$  be the prime number subsequence of $\ba_l$. Then $G_\bq \cap \WW_\pt =\langle[\II]^2\rangle$. By Corollary \ref{c:Gz} and Remark \ref{r:subseq}, 
$$
G_{\bq} =   [\CC_{p_0}]\oplus \bigoplus_{n > 0} \langle C_n\rangle
$$
where $C_n$ is an order 2 element given by $[\CC_{p_n}][\CC_{p_0}]^{-i_n}$ for some integer $i_n$.
Since $\langle [\CC_{p_n}]^2\rangle= \langle[\II]\rangle$ for all $n \ge 0$, the statement follows.
\end{proof}

\begin{thm}\label{thm:sW2} The group $s\WW_2$ has infinite rank.
\end{thm}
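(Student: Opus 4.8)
The plan is to produce an infinite family in $\sW_2$ that is linearly independent over $\BF_2$; since $\sW_2$ has exponent $2$, this yields $\rank(\sW_2)=\infty$. Fix $z$ and $l=4z+2$ as in Proposition~\ref{prop:G-and-pt}, and let $\bq=\{p_j\}_{j\ge 0}$ be the infinite subsequence of prime terms of $\ba_z$. For each $j$ set $y_j:=\smap([\CC_{p_j}])=[\CC_{p_j}\boxtimes\svec]\in\sW$. The category $\CC_{p_j}$ is modular, $[\CC_{p_j}]$ has order $32$ in $\WW$, and $\ker(\smap)=\langle[\II]\rangle$ has order $16$; hence $[\CC_{p_j}]\notin\ker(\smap)$, so $y_j\ne 0$, while $[\CC_{p_j}]^2\in\{[\II]^{3},[\II]^{11}\}\subseteq\ker(\smap)$ forces $y_j$ to have order exactly $2$. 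As $y_j$ is then a torsion class and the torsion subgroup of $\sW$ equals $\sW_{\pt}\oplus\sW_2$ by \cite[Proposition~5.18]{DNO}, we may write $y_j=u_j+v_j$ with $u_j\in\sW_{\pt}=\smap(\WW_{\pt})$ and $v_j\in\sW_2$.

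I claim $\{v_j:j\ge 0\}$ is $\BF_2$-linearly independent, which proves the theorem. Suppose $\sum_{j\in F}v_j=0$ for a finite nonempty set $F$. Then $\sum_{j\in F}y_j\in\sW_{\pt}=\smap(\WW_{\pt})$, so there is $B\in\WW_{\pt}$ with $\smap\bigl(\prod_{j\in F}[\CC_{p_j}]\bigr)=\smap(B)$, and therefore $\prod_{j\in F}[\CC_{p_j}]\cdot B^{-1}\in\ker(\smap)=\langle[\II]\rangle$, say this element equals $[\II]^{t}$. Rearranging, $\prod_{j\in F}[\CC_{p_j}]\cdot[\II]^{-t}=B\in\WW_{\pt}$. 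Since $[\CC_{p_0}]^2$ is an odd power of $[\II]$ coprime to $16$, the group $\langle[\CC_{p_0}]^2\rangle$ coincides with $\langle[\II]\rangle$; in particular $[\II]\in G_\bq$, so $\prod_{j\in F}[\CC_{p_j}]\cdot[\II]^{-t}\in G_\bq\cap\WW_{\pt}$, which equals $\langle[\II]^2\rangle$ by Proposition~\ref{prop:G-and-pt}. Hence $\prod_{j\in F}[\CC_{p_j}]\in\langle[\II]\rangle$, and writing $[\II]=[\CC_{p_0}]^{2s'}$ for a suitable integer $s'$ (possible because $[\CC_{p_0}]^2$ generates $\langle[\II]\rangle$) we conclude $\prod_{j\in F}[\CC_{p_j}]\cdot[\CC_{p_0}]^{2s}=[\Vs]$ for some integer $s$.

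Finally I would derive a contradiction from Lemma~\ref{lem:neg-new}. Reading the last relation as $\prod_{j}[\CC_{p_j}]^{b_j}=[\Vs]$ over $j\in F\cup\{0\}$, the exponent $b_j$ equals $1$ for $j\in F\setminus\{0\}$ and equals $2s$ or $1+2s$ for $j=0$, according to whether $0\notin F$ or $0\in F$. In every case at least one exponent is odd, because $F$ is nonempty. Since $\bq$ is an infinite subsequence of $\ba_z$, Lemma~\ref{lem:neg-new} applies and shows that such a product cannot be $[\Vs]$, a contradiction. Therefore $\{v_j\}$ is $\BF_2$-linearly independent, so $\sW_2$ has infinite rank.

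The step I expect to need the most care is the exponent bookkeeping in the last two paragraphs: one must check that absorbing the power of $[\II]$ into $[\CC_{p_0}]$ always contributes an \emph{even} exponent of $[\CC_{p_0}]$ (so that the odd exponent survives, possibly shifted onto $[\CC_{p_0}]$ itself when $0\in F$), since Lemma~\ref{lem:neg-new} requires some exponent to be odd. This uses that $[\CC_{p_0}]^2$ is an odd power of $[\II]$ which is a unit modulo $16$. The remaining ingredients — the structure of $\ker(\smap)$, the identity $\sW_{\pt}=\smap(\WW_{\pt})$, the torsion decomposition of $\sW$, and Proposition~\ref{prop:G-and-pt} — are applied directly.
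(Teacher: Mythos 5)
Your proof is correct and follows essentially the same route as the paper: you rely on the DNO torsion decomposition $\sW_\pt\oplus\sW_2$, the identity $\sW_\pt=\smap(\WW_\pt)$, the computation $\ker(\smap)=\langle[\II]\rangle$, Proposition~\ref{prop:G-and-pt}, and ultimately Lemma~\ref{lem:neg-new}. The only difference is expository: the paper phrases the same content as a sequence of quotient-group isomorphisms identifying $\smap(G_\bq+\WW_\pt)/\smap(\WW_\pt)$ with $G_\bq/\langle[\II]\rangle\cong(\BZ/2)^{\oplus\BN}$ (via Theorem~\ref{thm:Gz}), whereas you unwind this into an element-wise linear-independence argument for the $\sW_2$-components $v_j$, invoking Lemma~\ref{lem:neg-new} directly; your exponent bookkeeping at the end is sound.
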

\begin{proof}
Let $\bq=\{p_j\}_{j=0}^\infty$  be the prime number subsequence of $\ba_l$.  Then $\smap(G_\bq)$
is an elementary 2-group, and so $\smap(G_\bq +\WW_{\pt}) \subset s\WW_{\pt}\oplus s\WW_2$ by \cite[Proposition 5.18]{DNO}. By \cite[Proposition 5.18]{DNO},   $\sW_{\pt} = \smap(\WW_{\pt})$. Thus,  
$$
 \frac{\smap(G_\bq+\WW_{\pt})}{\smap(\WW_{\pt})} 
$$
is isomorphic to a subgroup of 
$ \sW_2$.
By Proposition \ref{prop:G-and-pt}, we have $G_\bq \cap \WW_\pt = \langle[\II]^2\rangle$. Since $\ker(\smap)= \langle[\II]\rangle$, by Corollary \ref{c:Gz} and Remark \ref{r:subseq}, we have
$$
\begin{aligned}
\frac{\smap(G_\bq +\WW_\pt)}{\smap(\WW_\pt)} \cong \frac{G_\bq+\WW_\pt}{\langle[\II]\rangle + \WW_\pt} &\cong 
\frac{G_\bq+\WW_\pt}{\WW_\pt}\bigg/\frac{\langle[\II]\rangle + \WW_\pt}{\WW_\pt}\\ 
&\cong
\frac{G_\bq}{G_\bq \cap \WW_\pt}\bigg/\frac{\langle[\II]\rangle }{G_\bq\cap  \WW_\pt}
\cong
\frac{G_\bq}{\langle[\II]\rangle} \cong (\BZ/2)^{\oplus \BN}\,.
\end{aligned}
$$
Therefore, $\sW_2$, is an infinite group.
\end{proof}

\section*{Acknowledgements}
The authors would like to thank Zhenghan Wang for his discussions on the square roots of Ising modular categories. They would also like to thank the referee for their invaluable suggestions to improving this paper. The first and the third authors would like to thank Ling Long for many fruitful discussions on the signatures of real algebraic integers.  This paper is based upon work supported by the National Science Foundation under the Grant No. DMS-1440140 while the first two and the last authors were in residence at the Mathematical Sciences Research Institute in Berkeley, California, during the Spring 2020 semester. 

\bibliographystyle{abbrv}
\bibliography{ref}

\end{document}